\title{Moduli spaces of Klein surfaces and related operads}
\author{Christopher Braun}
\theoremstyle{theorem}
\newtheorem{theorem}{Theorem}[section]
\newcommand{\newautoreftheorem}[2]{\newaliascnt{#1}{theorem}\newtheorem{#1}[#1]{#2}\aliascntresetthe{#1}\expandafter\def\csname #1autorefname\endcsname{#2}}
\newtheorem*{theorem*}{Theorem}
\theoremstyle{definition}
\newtheorem*{notation*}{Notation}
\numberwithin{equation}{section} 
\numberwithin{figure}{section}   
\newcommand{\cstart}{\xybox{*\xycircle(1,3){-}}}
\newcommand{\cstop}{\xybox{\ellipse(1,3){.}\ellipse(1,3)_,=:a(180){-}}}
\newcommand{\handle}{\xybox{(4,0.9)="A";"A"-(8,0.5)**\crv{"A"-(4,3.5)},(3,0.3)="A";"A"-(6,0.4)**\crv{"A"+(-4.3,1.5)}}}
\newcommand{\puncture}{\xybox{*\xycircle(2,2){-}}}
\newcommand{\crosscap}{\xybox{{\ellipse(2.5,2){.}}*\xycircle(0,2){-}*\xycircle(2.5,0){-}}}
\newcommand{\cobord}[8][(10,0)]{\xybox{@=#6@@{@i@={#6}@@{*{\handle}}@i}
@=#7@@{@i@={#7}@@{*{\puncture}}@i}@=#8@@{@i@={#8}@@{*{\crosscap}}@i},
(0,0)="d"@=#2@@{@i-(0,3)="B1"="T1"@={#2}
@@{;"T1";*{#4}-(0,3)="b"**\crv{"T1"+"d"&"b"+"d"}+(0,6)="T1",(5,0)="d"}@i},
(0,0)="d"@=#3@@{@i-(0,3)="B2"="T2"@={#3}
@@{;"T2";*{#5}-(0,3)="b"**\crv{"T2"-"d"&"b"-"d"}+(0,6)="T2",(5,0)="d"}@i},
#1="S"@=@(@=#2@@{@i@(@=@(@=#3@@{@i,
"T1";"T2"**\crv{"T1"+"S"&"T2"-"S"},"B1";"B2"**\crv{"B1"+"S"&"B2"-"S"}
@)}@i@)@@{@i,"T1";"B1"**\crv{"T1"+"S"&"B1"+"S"}@)}@)}
@i@)@@{@i@=#3@@{@i,"T2";"B2"**\crv{"T2"-"S"&"B2"-"S"}}@)}}}
\newcommand{\basiccob}[5][(10,0)]{\cobord[#1]{#2}{#3}{#4}{#5}{@i}{@i}{@i}}
\newcommand{\cylinder}[2]{\basiccob{(0,0)}{(20,0)}{#1}{#2}}
\newcommand{\shortcylinder}[2]{\basiccob{(0,0)}{(10,0)}{#1}{#2}}
\newcommand{\twist}[2]{\xybox{*{\basiccob[(5,0)]{(0,0)}{(20,12)}{#1}{#2}}*{\basiccob[(5,0)]{(0,12)}{(20,0)}{#1}{#2}}}}
\newcommand{\pants}[2]{\basiccob{(0,0),(0,12)}{(20,6)}{#1}{#2}}
\newcommand{\pair}[1]{\basiccob[(15,0)]{(0,0),(0,12)}{@i}{#1}{}}
\newcommand{\copants}[2]{\basiccob{(0,6)}{(20,0),(20,12)}{#1}{#2}}
\newcommand{\copair}[1]{\basiccob[(15,0)]{@i}{(20,0),(20,12)}{}{#1}}
\newcommand{\birth}[1]{\basiccob{@i}{(0,0)}{}{#1}}
\newcommand{\death}[1]{\basiccob{(0,0)}{@i}{#1}{}}
\newcommand{\rp}[1]{\xybox{*{\birth{}},(10,0)*{\shortcylinder{}{#1}},(7,0)*{\crosscap}}}
\newcommand{\flip}[2]{\xybox{(0,0)*{#1},(10,0)*{#2},(0,3);(10,-3)**\crv{(3,3)&(7,-3)},(0,-3);(10,3)**\crv{(3,-3)&(7,3)}}}
\newcommand{\co}{\colon\thinspace}
\newcommand{\dual}{\mathbf{D}}
\newcommand{\mob}{\mathrm{M}}
\newcommand{\modc}[1]{\overline{#1}}
\newcommand{\ctft}{\mathrm{TFT}}
\newcommand{\otft}{\mathrm{OTFT}}
\newcommand{\cktft}{\mathrm{pKTFT}}
\newcommand{\oktft}{{\mathrm{OKTFT}}}
\newcommand{\com}{\mathcal{C}om}
\newcommand{\ass}{\mathcal{A}ss}
\newcommand{\mcom}{\mob\com}
\newcommand{\mass}{\mob\ass}
\newcommand{\dass}{\dual\ass}
\newcommand{\dmass}{\dual\mass}
\newcommand{\modcom}{\modc{\com}}
\newcommand{\modass}{\modc{\ass}}
\newcommand{\modmcom}{\modc{\mcom}}
\newcommand{\modmass}{\modc{\mass}}
\newcommand{\moddass}{\modc{\dass}}
\newcommand{\moddmass}{\modc{\dmass}}
\newcommand{\no}{\mathcal{N}}
\newcommand{\ko}{\mathcal{K}}
\newcommand{\mo}{\mathcal{M}^\mathbb{R}}
\newcommand{\nb}{\overline{\no}}
\newcommand{\kb}{\overline{\ko}}
\newcommand{\mb}{\overline{\mo}}
\newcommand{\dr}{D^\mathbb{R}}
\newcommand{\degree}[1]{\bar{#1}}
\DeclareMathOperator{\vertices}{Vert}
\DeclareMathOperator{\halfedges}{Half}
\DeclareMathOperator{\edges}{Edge}
\DeclareMathOperator{\flags}{Flag}
\DeclareMathOperator{\legs}{Leg}
\DeclareMathOperator{\inputs}{In}
\DeclareMathOperator{\ob}{Ob}
\DeclareMathOperator{\iso}{Iso}
\DeclareMathOperator{\Hom}{Hom}
\DeclareMathOperator{\End}{End}
\DeclareMathOperator{\Det}{Det}
\newcommand{\cob}{\mathbf{2Cob}}
\newcommand{\cobo}{\mathbf{2Cob}^{\mathrm{o}}}
\newcommand{\cobcl}{\mathbf{2Cob}^{\mathrm{cl}}}
\newcommand{\kcob}{\mathbf{2KCob}}
\newcommand{\kcobo}{\mathbf{2KCob}^{\mathrm{o}}}
\newcommand{\kcobcl}{\mathbf{2KCob}^{\mathrm{cl}}}
\newcommand{\vect}{\mathbf{Vect}_k}
\newcommand{\dgvect}{\mathbf{dgVect}_k}
\newcommand{\topol}{\mathbf{Top}}
\newcommand{\dgop}{\mathbf{dgOp}}
\newcommand{\klein}{\mathbf{Klein}}
\newcommand{\dklein}{\mathbf{dKlein}}
\newcommand{\symriem}{\mathbf{SymRiem}}
\newcommand{\dsymriem}{\mathbf{dSymRiem}}
\newcommand{\nklein}{\mathbf{nKlein}}
\newcommand{\dnklein}{\mathbf{dnKlein}}
\newcommand{\nsymriem}{\mathbf{nSymRiem}}
\newcommand{\dnsymriem}{\mathbf{dnSymRiem}}
\begin{document}
\def\sectionautorefname{Section}

\begin{abstract}
We consider the extension of classical $2$--dimensional topological quantum field theories to Klein topological quantum field theories which allow unorientable surfaces. We approach this using the theory of modular operads by introducing a new operad governing associative algebras with involution. This operad is Koszul and we identify the dual dg operad governing $A_\infty$--algebras with involution in terms of M\"obius graphs which are a generalisation of ribbon graphs. We then generalise open topological conformal field theories to open Klein topological conformal field theories and give a generators and relations description of the open KTCFT operad. We deduce an analogue of the ribbon graph decomposition of the moduli spaces of Riemann surfaces: a M\"obius graph decomposition of the moduli spaces of Klein surfaces (real algebraic curves). The M\"obius graph complex then computes the homology of these moduli spaces. We also obtain a different graph complex computing the homology of the moduli spaces of admissible stable symmetric Riemann surfaces which are partial compactifications of the moduli spaces of Klein surfaces.
\end{abstract}

\maketitle 
\tableofcontents

\section*{Introduction}
One property of the original axiomatic definition by Atiyah \cite{atiyah} of a topological quantum field theory (TFT) is that all the manifolds considered are oriented. Alexeevski and Natanzon \cite{alexeevskinatanzon} considered a generalisation to manifolds that are not oriented (or even necessarily orientable) in dimension $2$. An unoriented TFT in this sense is then called a Klein topological quantum field theory (KTFT).

It is well known that $2$--dimensional closed TFTs are equivalent to commutative Frobenius algebras and open TFTs are equivalent to symmetric (but not necessarily commutative) Frobenius algebras, for example see Moore \cite{moore2} and Segal \cite{segal}. Theorems of this flavour identifying the algebraic structures of KTFTs have also been shown. In the language of modular operads, developed by Getzler and Kapranov \cite{getzlerkapranov}, these results for oriented TFTs say that the modular operads governing closed and open TFTs are $\modcom$ and $\modass$ which are the modular closures (the smallest modular operad containing a cyclic operad) of $\com$ and $\ass$, which govern commutative and associative algebras.

It is also possible to generalise TFTs by adding extra structure to our manifolds such as a complex structure which gives the notion of a topological conformal field theory (TCFT). We can also find topological modular operads governing TCFTs constructed from moduli spaces of Riemann surfaces.

The ribbon graph decomposition of moduli space is an orbi-cell complex homeomorphic to $\mathcal{M}_{g,n}\times \mathbb{R}^n_{>0}$ with cells labelled by ribbon graphs, introduced in Harer \cite{harer} and Penner \cite{penner}. Ribbon graphs arise from the modular closure of the $A_\infty$ operad (cf Kontsevich \cite{kontsevich}). Indeed the cellular chain complex of the operad given by gluing stable holomorphic discs with marked points on the boundary is equivalent to the $A_\infty$ operad and can be thought of as the genus $0$ part of the operad governing open TCFTs. It was shown by Kevin Costello \cite{costello1,costello2} that this gives a dual point of view on the ribbon graph decomposition of moduli space: The operad governing open TCFTs is homotopy equivalent to the modular closure of the suboperad of conformal discs and so this gives a quasi-isomorphism on the chain complex level to the modular closure of the $A_\infty$ operad. The moduli spaces underlying the open TCFT operad are those of stable Riemann surfaces with boundary and marked points on the boundary. In particular this yields new proofs of ribbon graph complexes computing the homology of these moduli spaces.

We wish to consider the corresponding theory for KTFTs. Alexeevski and Natanzon \cite{alexeevskinatanzon} considered open--closed KTFTs and Turaev and Turner \cite{turaevturner} considered just closed KTFTs. We will concentrate mainly on the open version in order to parallel the theory outlined above. We begin by recasting the definitions for KTFTs in terms of modular operads. We show that the open KTFT operad is given by the modular closure of the cyclic operad $\mass$ which is the operad governing associative algebras with involution. The corresponding notion of a ribbon graph, a M\"obius graph, is also developed to identify $\mass$ and the various operads obtained from it. On the other hand the closed KTFT operad is not the modular closure of a cyclic operad.

We then generalise to the Klein analogue of open TCFTs (open KTCFTs). The correct notion here of an `unoriented Riemann surface' is a Klein surface, where we allow transition functions between charts to be anti-analytic. Alling and Greenleaf \cite{allinggreenleaf} developed some of the classical theory of Klein surfaces and showed that Klein surfaces are equivalent to smooth projective real algebraic curves. We find appropriate partial compactifications of moduli spaces of Klein surfaces which form the modular operad governing open KTCFTs. We also consider other different (although more common) partial compactifications giving rise to a quite different modular operad. The underlying moduli spaces of this latter operad are spaces of `admissible' stable symmetric Riemann surfaces (which are open subspaces of the usual compactifications containing all stable symmetric surfaces).

By following the methods of Costello \cite{costello1,costello2} we can obtain graph decompositions of these moduli spaces. Precisely this means we find orbi-cell complexes homotopy equivalent to these spaces with each orbi-cell labelled by a type of graph. As a consequence we see that open KTCFTs are governed by the modular closure of the operad governing $A_\infty$--algebras with involution and we obtain a M\"obius graph complex computing the homology of the moduli spaces of smooth Klein surfaces. We also obtain a different graph complex computing the homology of the other partial compactifications.

This work was done as part of a PhD at the University of Leicester under the supervision of Andrey Lazarev.

\begin{notation*}
Throughout this paper $k$ will denote a field which, for simplicity and convenience, we will normally assume to be $\mathbb{Q}$ unless stated otherwise. Many of the definitions and results should of course work over more general fields.
\end{notation*}

\subsection*{Outline and main results}
The first two sections provide the necessary background and notation. In the first section definitions of topological quantum field theories and their Klein analogues are briefly introduced in terms of symmetric monoidal categories of cobordisms. The known results concerning the structure of KTFTs are stated and we provide some pictures that hopefully shed light on how these results arise. In the second section the definitions from the theory of modular operads that we use is recalled and the cobar construction is outlined. We include a slight generalisation of modular operads: extended modular operads (which is very similar to the generalisation of Chuang and Lazarev \cite{chuanglazarev}). For the reader familiar with modular operads this section will likely be of little interest apart from making clear the notation used here.

The third section introduces the open KTFT modular operad denoted $\oktft$. M\"obius trees and graphs are discussed in detail and the operad $\mass$ is defined in terms of M\"obius trees. This is the operad governing associative algebras with an involutive anti-automorphism. We then show $\oktft\cong\modmass$ thereby providing a generators and relations description of $\oktft$ in terms of M\"obius graphs. We show $\mass$ is its own quadratic dual, is Koszul and identify the dual dg operad $\dmass$ (governing $A_\infty$--algebras with an involution) and its modular closure. Finally we generalise our construction and discuss the closed KTFT operad, showing that only part of the closed KTFT operad is the modular closure of an operad $\mcom$.

In the fourth section we generalise to open KTCFTs. We discuss the necessary definitions and theory of Klein surfaces and nodal Klein surfaces. A subtlety arises when considering nodal surfaces and we find there are two different natural notions of a node. We provide some clarity on this difference by establishing some equivalences of categories: we show that one sort of nodal Klein surface is equivalent to a certain sort of symmetric nodal Riemann surface with boundary and the other is equivalent to a certain sort of symmetric nodal Riemann surface without boundary. We obtain moduli spaces $\kb_{g,u,h,n}$ of stable nodal Klein surfaces with $g$ handles, $u$ crosscaps, $h$ boundary components and $n$ oriented marked points using one definition of a node. We also obtain quite different moduli spaces $\mb_{\tilde{g},n}$ of `admissible' stable symmetric Riemann surfaces without boundary of genus $\tilde{g}$ and $n$ fixed marked points using the other definition. The spaces $\kb_{g,u,h,n}$ are homotopy equivalent to their interiors which are the spaces $\ko_{g,u,h,n}$ of smooth Klein surfaces with oriented marked points. The spaces $\mb_{\tilde{g},n}$ are partial compactifications of the spaces $\mo_{\tilde{g},n}$ of smooth symmetric Riemann surfaces, which are the same as the spaces of smooth Klein surfaces with unoriented marked points. Let $D_{g,u,h,n}\subset\kb_{g,u,h,n}$ be the locus of surfaces such that each irreducible part is a disc. Let $\dr_{\tilde{g},n}$ be the corresponding subspace of $\mb_{\tilde{g},n}$. We obtain topological modular operads $\kb$ and $\mb$ by gluing at marked points. The operad $\kb$ gives the correct generalisation governing open KTCFTs. We then show the inclusions of the suboperads arising from the spaces $D_{g,u,h,n}$ and $\dr_{\tilde{g},n}$ are homotopy equivalences.
\begin{theorem*}
\Needspace*{4\baselineskip}\mbox{}
\begin{itemize}
\item The inclusion $D\hookrightarrow\kb$ is a homotopy equivalence of extended topological modular operads.
\item The inclusion $\dr\hookrightarrow\mb$ is a homotopy equivalence of extended topological modular operads.
\end{itemize}
\end{theorem*}

Applying an appropriate chain complex functor $C_*$ from topological spaces to dg vector spaces over $\mathbb{Q}$ we obtain dg modular operads and the above result translates to:
\begin{theorem*}
There are quasi-isomorphism of extended dg modular operads over $\mathbb{Q}$
\begin{gather*}
C_*(D)\simeq C_*(\kb)\\
C_*(D)/(a=1)\simeq C_*(\mb)
\end{gather*}
where $a\in C_*(D)((0,2))\cong\mathbb{Q}[\mathbb{Z}_2]$ is the involution.
\end{theorem*}

The spaces $D_{g,u,h,n}$ decompose into orbi-cells labelled by M\"obius graphs and so we can identify the cellular chain complexes $C_*(D)$ in terms of the operad $\mass$ so that $C_*(D)\cong\moddmass$. Therefore we see that an open KTCFT is a Frobenius $A_\infty$--algebra with involution and we also obtain M\"obius graph complexes computing the homology of the moduli spaces of smooth Klein surfaces as well as different graph complexes (arising from $\moddmass/(a=1)$) computing the homology of the partial compactifications given by $\mb$. Unlike $H_\bullet(\kb)$, the genus $0$ part of $H_\bullet(\mb)$ has non-trivial components in higher degrees. The gluings for the operad $\mb$ can be thought of as `closed string' gluings similar to those for the Deligne--Mumford operad.

We finish by unwrapping our main theorems to give concrete and elementary descriptions of the different graph complexes and explain the isomorphisms of homology without reference to operads.

\section{Topological quantum field theories}
Since we will be working in dimension $2$ we restrict our definitions to dimension $2$. It is possible to give definitions in arbitrary dimension easily for closed field theories, but for open field theories it is necessary to mention manifolds with faces in order to glue cobordisms properly. This is a technical concern not of interest to us here. We will first briefly recall the details of oriented topological quantum field theories and then define a Klein topological quantum field theory and recall well known results about dimension $2$ topological field theories and their unoriented analogues.

\subsection{Oriented topological field theories}
We begin by recalling the classical definitions.

\begin{definition}
We define the category $\cob$ as follows:
\begin{itemize}
\item Objects of $\cob$ are compact oriented $1$--manifolds (disjoint unions of circles and intervals).
\item Morphisms between a pair of objects $\Sigma_0$ and $\Sigma_1$, are oriented cobordisms from $\Sigma_0$ to $\Sigma_1$ up to diffeomorphism. That is a compact, oriented $2$--manifold $M$ together with orientation preserving diffeomorphisms $\Sigma_0\simeq\partial M_{\mathrm{in}}\subset\partial M$ and $\Sigma_1\simeq\overline{\partial M}_{\mathrm{out}}\subset\partial M$ (where $\overline{\partial M}_{\mathrm{out}}$ means $\partial M_{\mathrm{out}}$ with the opposite orientation) with $\partial M_{\mathrm{in}}\cap\partial M_{\mathrm{out}}=\emptyset$. We call $\partial M_{\mathrm{in}}$, $\partial M_{\mathrm{out}}$ and $\partial M_{\mathrm{free}}=\partial M\setminus(\partial M_{\mathrm{in}}\cup\partial M_{\mathrm{out}})$ the \emph{in boundary}, the \emph{out boundary} and the \emph{free boundary} respectively. We say two cobordisms $M$ and $M'$ are diffeomorphic if there is an orientation preserving diffeomorphism $\psi\co M\stackrel{\sim}{\rightarrow}M'$ where the following commutes:
\[\xymatrix{
 & M\ar[dd]_\psi^\simeq\\
\Sigma_0\ar[ur]\ar[dr] & &\Sigma_1\ar[ul]\ar[dl]\\
 & M'
}\]
\item Composition is given by gluing cobordisms together. As mentioned above, care must be taken to ensure that gluing is well defined up to diffeomorphism. In dimension $2$ we know that smooth structure depends only on the topological structure of our manifold so we will avoid discussing the technical issues. Gluing is associative and the identity morphism from $\Sigma$ to itself is given by the cylinder $\Sigma\times I$.
\end{itemize}
\end{definition}

It can be shown that the category $\cob$ is a symmetric monoidal category with the tensor product operation given by disjoint union of manifolds.

\begin{definition}
An \emph{open--closed topological field theory} is a symmetric monoidal functor $\cob\rightarrow\vect$, where $\vect$ is the category of vector spaces over the field $k$.
\end{definition}

We can now consider open and closed theories separately by restricting to the appropriate subcategory.

\begin{definition}
\Needspace*{4\baselineskip}\mbox{}
\begin{itemize}
\item The category $\cobcl$ is the (symmetric monoidal) subcategory of $\cob$ with objects closed oriented $1$--manifolds (disjoint unions of circles) and morphisms with empty free boundary. A \emph{closed topological field theory} is a symmetric monoidal functor to $\vect$.
\item The category $\cobo$ is the full (symmetric monoidal) subcategory of $\cob$ with those objects which are not in $\cobcl$ (disjoint unions of intervals). An \emph{open topological field theory} is a symmetric monoidal functor to $\vect$.
\end{itemize}
\end{definition}

We then have the following classical results:

\begin{proposition}\label{prop:ctft}
Closed topological field theories of dimension $2$ are equivalent to commutative Frobenius algebras (see for example the book by Kock \cite{kock}).
\end{proposition}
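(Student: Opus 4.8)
The plan is to exhibit mutually inverse constructions between symmetric monoidal functors $\cobcl\to\vect$ and commutative Frobenius algebra structures on a vector space, leveraging the fact — which I would recall and cite rather than reprove — that $\cobcl$ admits a presentation as the free symmetric monoidal category generated by one object equipped with the structure morphisms of a commutative Frobenius algebra (see Kock \cite{kock}).

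First I would extract the algebra from a closed TFT $Z$. Set $A:=Z(S^1)$; monoidality forces $Z(\emptyset)=k$ and $Z(\coprod_n S^1)\cong A^{\otimes n}$. The connected genus-$0$ cobordisms then supply the operations: the pair of pants $S^1\sqcup S^1\to S^1$ gives a product $m\co A\otimes A\to A$, the disc $\emptyset\to S^1$ gives a unit $\eta\co k\to A$, the disc $S^1\to\emptyset$ gives $\epsilon\co A\to k$, and the reversed pair of pants gives a coproduct $\Delta$. Functoriality applied to the two handle decompositions of the four-holed sphere yields associativity; the monoidal symmetry together with the self-diffeomorphism of the pair of pants swapping its two in-circles yields commutativity; capping one leg with a disc yields the unit axiom; and the two decompositions of the one-holed-in, one-holed-out genus-$1$ cobordism (equivalently, the standard ``$S$-move'' on the four-holed sphere read with two in and two out boundaries) yield the Frobenius compatibility $(m\otimes\mathrm{id})(\mathrm{id}\otimes\Delta)=\Delta m=(\mathrm{id}\otimes m)(\Delta\otimes\mathrm{id})$. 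Finally, expressing the identity cylinder on $S^1$ as a zig-zag built from a pair of pants, a disc, and a copair of pants gives the snake identity for the bilinear form $\beta:=\epsilon\circ m$, so $\beta$ is non-degenerate; in $\vect$ this forces $\dim_k A<\infty$. Hence $(A,m,\eta,\epsilon)$ is a commutative Frobenius algebra, and one checks that a monoidal natural transformation of closed TFTs is precisely a homomorphism of the associated Frobenius algebras.

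Conversely, given a commutative Frobenius algebra $A$ I would define $Z$ on objects by $Z(\coprod_n S^1)=A^{\otimes n}$ and on a cobordism $M$ by choosing a Morse function, cutting $M$ into elementary pieces (births, deaths, pairs of pants, copairs of pants, and permutation cylinders), and composing the corresponding structure maps $\eta,\epsilon,m,\Delta$ and symmetries. That this is independent of all choices is exactly the statement that the defining relations in the presentation of $\cobcl$ are satisfied in every commutative Frobenius algebra, which is what the computations of the previous paragraph establish; symmetric monoidality is then immediate. The two assignments are visibly inverse to one another on objects, morphisms, and natural transformations, giving the claimed equivalence of categories.

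The main obstacle is not any of the algebra above but the presentation of $\cobcl$ itself: the claim that every connected oriented cobordism decomposes into the listed elementary pieces, and that any two such decompositions are connected by moves corresponding to the commutative Frobenius axioms. This rests on the classification of compact surfaces together with a Cerf-theoretic analysis (handle slides and cancellations) of generic paths of Morse functions on a cobordism. As this is standard and somewhat lengthy, I would not reproduce it, referring instead to the detailed treatment in Kock \cite{kock}.
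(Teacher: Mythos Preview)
The paper does not give a proof of this proposition at all; it simply records it as a classical fact with a citation to Kock. So there is nothing in the paper to compare your argument against. Your sketch is essentially the standard proof found in Kock's book, and in outline it is correct: extract the structure maps from the elementary cobordisms, verify the axioms from diffeomorphisms between different decompositions, and appeal to the generators-and-relations presentation of $\cobcl$ for the converse and for well-definedness.

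One small correction: the Frobenius compatibility $(m\otimes\mathrm{id})(\mathrm{id}\otimes\Delta)=\Delta m=(\mathrm{id}\otimes m)(\Delta\otimes\mathrm{id})$ does not come from a genus-$1$ cobordism with one in-circle and one out-circle. All three sides are maps $A\otimes A\to A\otimes A$ and the underlying surface is the four-holed sphere, which has genus $0$; your parenthetical remark is the correct description and contradicts the sentence it is attached to. The genus-$1$ one-in one-out cobordism is $m\circ\Delta$, the handle operator, which plays no role in the Frobenius axiom itself.
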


\begin{proposition}\label{prop:otft}
Open topological field theories of dimension $2$ are equivalent to symmetric Frobenius algebras (in other words not necessarily commutative but the bilinear form is symmetric, see Moore \cite{moore2}, Segal \cite{segal} or Chuang and Lazarev \cite{chuanglazarev}).
\end{proposition}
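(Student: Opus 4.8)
The plan is to reduce the statement to a generators-and-relations presentation of the symmetric monoidal category $\cobo$ and then match those relations with the axioms of a symmetric Frobenius algebra, following Moore \cite{moore2} and Segal \cite{segal} (see also Chuang and Lazarev \cite{chuanglazarev}). First I would record that, since every object of $\cobo$ is a disjoint union of copies of the interval $I$, the category $\cobo$ is generated as a symmetric monoidal category by the single object $I$ together with four elementary open cobordisms: a multiplication $\mu\colon I\sqcup I\to I$ (the open pair of pants), a unit $u\colon\emptyset\to I$ (the disc with one interval on its boundary), and the two time-reversed cobordisms $\delta\colon I\to I\sqcup I$ and $\epsilon\colon I\to\emptyset$. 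The full list of relations is the classification of compact surfaces with corners in disguise: associativity and unitality of $(\mu,u)$; coassociativity and counitality of $(\delta,\epsilon)$; the Frobenius compatibility $(\mu\sqcup\mathrm{id})\circ(\mathrm{id}\sqcup\delta)=\delta\circ\mu=(\mathrm{id}\sqcup\mu)\circ(\delta\sqcup\mathrm{id})$; and the cyclic invariance of the ``counit of a product'' cobordism under the $\mathbb{Z}_2$ that rotates its two incoming intervals. This last relation is exactly what separates the open theory from the closed one: it produces a symmetric, rather than commutative, Frobenius algebra. Since establishing that these moves suffice is precisely where the manifolds-with-faces subtleties alluded to above live, here I would invoke the presentation from the cited sources rather than reprove the classification of surfaces with boundary and corners.

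Granting the presentation, the forward direction becomes a dictionary. For an open field theory $Z\colon\cobo\to\vect$ set $A=Z(I)$; functoriality sends $\mu,u,\delta,\epsilon$ to $k$-linear maps making $(A,\mu_A,u_A)$ a unital associative algebra, and the pairing $\langle a,b\rangle:=\epsilon_A(\mu_A(a,b))$ is nondegenerate with explicit inverse copairing $\delta_A(u_A(1))$, the two snake identities being exactly the two Frobenius relations post-composed with (co)units. The cyclic relation then gives $\langle a,b\rangle=\langle b,a\rangle$, i.e.\ $\epsilon_A(ab)=\epsilon_A(ba)$, so $A$ is a symmetric Frobenius algebra. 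Finite-dimensionality is automatic: $I$ is self-dual in $\cobo$ via the bent-interval cobordisms, so $A$ is a dualizable object of $\vect$ and hence $\dim_k A<\infty$. Conversely, a finite-dimensional symmetric Frobenius algebra supplies precisely the data $(A,\mu_A,u_A,\delta_A,\epsilon_A)$ satisfying the relations of the presentation (with $\delta_A$ and $\epsilon_A$ reconstructed from the multiplication and the trace), so it extends uniquely to a symmetric monoidal functor $\cobo\to\vect$. These two assignments are mutually inverse up to isomorphism of Frobenius algebras on one side and monoidal natural isomorphism on the other, which is the asserted equivalence.

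The one genuinely nontrivial ingredient is the presentation of $\cobo$ itself: showing that these four generators and the listed relations really do present the open cobordism category requires the classification of surfaces with corners together with a careful analysis of gluing along portions of the boundary. Everything downstream of that — the algebra axioms, nondegeneracy of the pairing, the symmetric (trace) property, and the inverse construction — is routine diagram chasing in $\vect$. Accordingly the proof as I would write it consists of a reference to \cite{moore2,segal,chuanglazarev} for the presentation of $\cobo$, followed by the dictionary sketched above.
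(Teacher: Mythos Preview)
Your proposal is correct and, in fact, goes further than the paper does: the paper offers no proof of this proposition at all, simply citing Moore, Segal, and Chuang--Lazarev as references for the result. Your sketch follows exactly those sources, so there is nothing to compare in terms of approach---you have filled in what the paper deliberately left as a citation.
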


\begin{proposition}\label{prop:octft}
Open--closed topological field theories of dimension $2$ are equivalent to `knowledgeable Frobenius algebras' (see Lauda and Pfeiffer \cite{laudapfeiffer} for definitions and proof or also Lazaroiu \cite{lazaroiu} and Moore \cite{moore1}).
\end{proposition}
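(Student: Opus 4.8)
The plan is to prove the equivalence by giving a generators and relations presentation of the symmetric monoidal category $\cob$, in the spirit of (indeed building on) Propositions \ref{prop:ctft} and \ref{prop:otft}. First I would recall the notion of a \emph{knowledgeable Frobenius algebra}: it consists of a commutative Frobenius algebra $C$ (the closed sector), a symmetric Frobenius algebra $A$ (the open sector), a homomorphism of the underlying algebras $\iota\co C\to A$ whose image lies in the centre of $A$, and a linear map $\iota^*\co A\to C$ which is the adjoint of $\iota$ with respect to the two Frobenius pairings, all subject to the Cardy condition: the composite $\iota\circ\iota^*\co A\to A$ equals the map $a\mapsto\sum_i x_i a x^i$, where $\{x_i\}$ and $\{x^i\}$ are dual bases of $A$. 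Morphisms of knowledgeable Frobenius algebras are pairs of Frobenius algebra maps commuting with $\iota$ and $\iota^*$; the claim is an equivalence between the resulting category and the category of $2$--dimensional open--closed TFTs (with monoidal natural transformations).

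Next I would establish the presentation of $\cob$. By Morse theory, i.e.\ handle decompositions of compact $2$--manifolds with corners, every morphism of $\cob$ factors as a composite of tensor products (disjoint unions) of a short list of elementary cobordisms: in the closed sector the cup, the cap, the pair of pants and the copair of pants; in the open sector the interval analogues of these together with the symmetry of $I\sqcup I$; and two mixed cobordisms, the ``zipper'' $I\to S^1$ and the ``cozipper'' $S^1\to I$. The relations are generated by: the closed-sector relations of Proposition \ref{prop:ctft} making $C$ a commutative Frobenius algebra; the open-sector relations of Proposition \ref{prop:otft} making $A$ a symmetric Frobenius algebra; the relations expressing that the zipper is a unital algebra map landing in the centre and that the cozipper is its adjoint; and the Cardy relation. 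That this list is complete is the structure theorem for the open--closed cobordism category, proved by normalising handle decompositions and checking that any two presentations of a given cobordism are connected by moves realised by the listed relations.

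Granting the presentation, a symmetric monoidal functor $Z\co\cob\to\vect$ amounts to the data of vector spaces $A=Z(I)$ and $C=Z(S^1)$ together with the images of the generating cobordisms, subject to the above relations. By Propositions \ref{prop:otft} and \ref{prop:ctft} the open- and closed-sector data and relations are precisely a symmetric Frobenius algebra structure on $A$ and a commutative Frobenius algebra structure on $C$; the images of the zipper and cozipper give linear maps $\iota\co C\to A$ and $\iota^*\co A\to C$, and the remaining relations translate verbatim into centrality of $\iota$, adjointness of $\iota^*$, and the Cardy condition. Conversely, a knowledgeable Frobenius algebra assigns to each generator a morphism satisfying the relations, hence determines a well-defined functor $Z$. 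These two constructions are mutually inverse and natural, and upgrade to the asserted equivalence of categories.

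The main obstacle is the presentation of $\cob$, specifically the completeness of the relations. The classification of surfaces makes the objects and the list of generating morphisms transparent, but showing that every identity between composites of elementary cobordisms follows from the listed ones requires care with $2$--manifolds with corners and with the interplay of the open and closed sectors. The subtle point is the Cardy relation, which records the two ways of resolving a cylinder joining the two sectors; it is exactly what forces the normalisation of handle decompositions to be carried out for the whole open--closed theory at once rather than sector by sector. This analysis is carried out in detail by Lauda and Pfeiffer \cite{laudapfeiffer}; see also Lazaroiu \cite{lazaroiu} and Moore \cite{moore1}.
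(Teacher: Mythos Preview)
The paper does not prove this proposition at all: it is stated as background with the proof entirely deferred to the references \cite{laudapfeiffer}, \cite{lazaroiu}, \cite{moore1}. Your proposal is a correct and faithful sketch of exactly the argument carried out in those references (generators-and-relations for $\cob$ via handle decompositions, then matching the relations against the knowledgeable Frobenius algebra axioms), so in that sense you and the paper agree on the approach --- you have simply expanded what the paper leaves as a citation. Indeed your final paragraph already acknowledges that the hard part, completeness of the relations, is what \cite{laudapfeiffer} actually establishes; so your write-up is itself ultimately a pointer to the same source.
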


\subsection{Klein topological field theories}
To extend to the unorientable case we suppress all mentions of orientations. This leads to the following definition:

\begin{definition}
We define the category $\kcob$ as follows:
\begin{itemize}
\item Objects of $\kcob$ are compact $1$--manifolds (disjoint unions of circles and intervals).
\item Morphisms between a pair of objects $\Sigma_0$ and $\Sigma_1$, are (not necessarily orientable) cobordisms from $\Sigma_0$ to $\Sigma_1$ up to diffeomorphism. That is a compact $2$--manifold $M$ together with diffeomorphisms $\Sigma_0\simeq\partial M_{\mathrm{in}}\subset\partial M$ and $\Sigma_1\simeq\partial M_{\mathrm{out}}\subset\partial M$ with $\partial M_{\mathrm{in}}\cap\partial M_{\mathrm{out}}=\emptyset$. We say two cobordisms $M$ and $M'$ are diffeomorphic if there is a diffeomorphism $\psi\co M\stackrel{\sim}{\rightarrow}M'$ where the following commutes:
\[\xymatrix{
 & M\ar[dd]_\psi^\simeq\\
\Sigma_0\ar[ur]\ar[dr] & &\Sigma_1\ar[ul]\ar[dl]\\
 & M'
}\]
\item Composition is given by gluing cobordisms together. The identity morphism from $\Sigma$ to itself is given by the cylinder $\Sigma\times I$.
\end{itemize}
\end{definition}

As in the orientable case $\kcob$ is a symmetric monoidal category by disjoint union of manifolds.

It is convenient to identify $\cob$ and $\kcob$ with their skeletons. Recall that since all oriented circles are isomorphic (since $S^1$ is diffeomorphic to itself with the opposite orientation) the skeleton of $\cob$ is the full subcategory with objects disjoint unions of copies of a single oriented $S^1$ (so the set of objects can be identified with the natural numbers). Similarly the skeleton of $\kcob$ is the full subcategory with objects disjoint unions of copies of a single unoriented $S^1$ (so again the set of objects can be identified with the natural numbers). In this way we can think of $\cob$ as a subcategory of $\kcob$ by forgetting orientations. Note that even if the underlying manifold $M$ of a cobordism in $\kcob$ is orientable the cobordism itself is not necessarily in $\cob$, since it may not be possible to choose an orientation of $M$ such that the embeddings $\Sigma_0\hookrightarrow\partial M\hookleftarrow\Sigma_1$ are orientation preserving. Consider for example:
\[
\begin{xy}
*{\cylinder{\cstart}{\cstop}},
(-9,-1)*\dir2{<},(11,-1)*\dir2{<}
\end{xy}
\qquad\neq\qquad
\begin{xy}
*{\cylinder{\cstart}{\cstop}},
(-9,-1)*\dir2{<},(11,1)*\dir2{>}
\end{xy}
\]
The cobordisms above are both morphisms from $S^1$ to itself (where the arrows denote the directions of the embeddings of $S^1$). However while the cobordism on the left is the identity morphism, the cobordism on the right is in $\kcob$ but not in $\cob$.

\begin{definition}
An \emph{open--closed Klein topological field theory} is a symmetric monoidal functor $\kcob\rightarrow\vect$.
\end{definition}

\begin{definition}
\Needspace*{4\baselineskip}\mbox{}
\begin{itemize}
\item The category $\kcobcl$ is the (symmetric monoidal) subcategory of $\kcob$ with objects closed $1$--manifolds without boundary (disjoint unions of unoriented circles) and morphisms with empty free boundary. A \emph{closed Klein topological field theory} is a symmetric monoidal functor to $\vect$.
\item The category $\kcobo$ is the full (symmetric monoidal) subcategory of $\kcob$ with those objects which are not in $\kcobcl$ (disjoint unions of intervals). An \emph{open Klein topological field theory} is a symmetric monoidal functor to $\vect$.
\end{itemize}
\end{definition}

We then have analogues of \autoref{prop:ctft}, \autoref{prop:otft} and \autoref{prop:octft}.

\begin{proposition}\label{prop:ocktft}
Open--closed Klein topological field theories of dimension $2$ are equivalent to `structure algebras' (see Alexeevski and Natanzon \cite{alexeevskinatanzon} for a definition and a proof).
\end{proposition}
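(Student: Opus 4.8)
The plan is to follow the standard strategy for identifying the algebraic structure underlying a two--dimensional field theory: present $\kcob$ as a symmetric monoidal category by generators and relations, so that a symmetric monoidal functor $F\co\kcob\to\vect$ becomes precisely the data of the images of the generators subject to the images of the relations, and then observe that this data unwinds to exactly the definition of a structure algebra.

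First I would make the morphisms of $\kcob$ concrete using the classification of surfaces: a connected compact surface is determined up to diffeomorphism by its orientability, its genus (equivalently, for a non--orientable surface, its number of crosscaps) and its number of boundary circles, and a cobordism structure on such a surface is the extra data of a partition of the boundary circles into in, out and free components together with the marked intervals along the free boundary. A Morse function / handle decomposition then shows that every such cobordism is a composite of tensor products of finitely many elementary cobordisms. Beyond the generators already needed in the oriented open--closed case (the open and closed multiplications, units, comultiplications and counits, the symmetry, and the ``zipper'' and ``cozipper'' relating the open and closed sectors, cf.\ Proposition~\ref{prop:octft}) the non--orientable theory needs two new families: the crosscap cobordism (a M\"obius band regarded as a morphism $S^1\to\emptyset$, and its open analogue $I\to I$), and the orientation--reversing cylinder and strip, which under $F$ become involutions of $C=F(S^1)$ and of $A=F(I)$.

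Next I would pin down a complete set of relations. Besides the relations of the oriented open--closed theory, the new ones arise from the elementary moves between handle decompositions that involve a crosscap: that the orientation--reversing cylinder and strip are algebra involutions compatible with the Frobenius pairings and with the zipper; that the crosscap commutes suitably with the multiplications and with these involutions; the Dyck relation, which identifies the connected sum of three projective planes with the connected sum of a torus and a projective plane and hence lets a crosscap be traded for a handle in the presence of another crosscap; and the relations governing the passage of a crosscap through the zipper between the open and closed sectors. Showing that these relations \emph{suffice} --- that any two factorizations of a cobordism into generators are linked by such moves --- is the technical heart of the argument and the step I expect to be the main obstacle; it rests on reducing cobordisms to a normal form via the surface classification while carefully tracking the free boundary and the marked intervals in the open sector.

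Finally I would assemble the equivalence. Given $F\co\kcob\to\vect$, set $A=F(I)$ and $C=F(S^1)$; the images of the generators equip $A$ and $C$ with the operations of a structure algebra and the relations become exactly its axioms, while conversely a structure algebra defines a functor on the generators that respects all relations and so descends to a well--defined symmetric monoidal functor on $\kcob$. These assignments are mutually inverse, and a monoidal natural isomorphism of such functors corresponds to an isomorphism of structure algebras, so the category of open--closed Klein topological field theories is equivalent to the category of structure algebras, recovering the theorem of Alexeevski and Natanzon.
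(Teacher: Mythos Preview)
The paper does not actually prove this proposition: it is stated with an explicit reference to Alexeevski and Natanzon \cite{alexeevskinatanzon} for both the definition of a structure algebra and the proof, and is then used only to deduce the closed case (Proposition~\ref{prop:cktft}) by setting the open part to zero. So there is no proof in the paper to compare your proposal against.

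That said, your outline is a reasonable sketch of the standard approach and is essentially the strategy used in \cite{alexeevskinatanzon} (and in \cite{laudapfeiffer} for the oriented open--closed case). The genuine content, as you correctly identify, lies in proving sufficiency of the relations: reducing arbitrary cobordisms to a normal form and showing any two decompositions are related by the listed moves. Your sketch does not carry this out, and a referee would not accept it as a proof without that step; but since the paper itself simply defers to the literature, your level of detail already exceeds what the paper provides. One small correction: the crosscap is more naturally viewed as a morphism $\emptyset\to S^1$ (the M\"obius band with its boundary circle as output), matching the element $U\in C$ in the closed sector discussed just after Proposition~\ref{prop:cktft}, rather than $S^1\to\emptyset$.
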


In particular we can immediately deduce from the above result proved in \cite{alexeevskinatanzon}, by setting the open part of a structure algebra to $0$, the result for closed KTFTs. It is also proved separately by Turaev and Turner \cite{turaevturner}.

\begin{proposition}\label{prop:cktft}
Closed Klein topological field theories of dimension $2$ are equivalent to the following structures.
\begin{itemize}
\item A commutative Frobenius algebra $A$ with an involutive anti-automorphism\footnote{Since $A$ is commutative an anti-automorphism is of course just an automorphism. Here however it is best thought of as an anti-automorphism on an algebra that just happens to be commutative for comparison with open KTFTs.} $x\mapsto x^*$ preserving the pairing. That is, $(x^*)^*=x$, $(xy)^*=y^*x^*$ and $\langle x^*,y^*\rangle=\langle x,y \rangle$.
\item There is an element $U\in A$ such that $(aU)^*=aU$ for any $a\in A$ and $U^2=\sum\alpha_i\beta_i^*$, where the copairing $\Delta\co k\rightarrow A\otimes A$ is given by $\Delta(1)=\sum\alpha_i\otimes\beta_i$.
\end{itemize}
\end{proposition}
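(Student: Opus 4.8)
The plan is to deduce the statement from the open--closed case, Proposition~\ref{prop:ocktft}, and afterwards indicate the self-contained argument via a presentation of $\kcobcl$. \emph{First approach.} A symmetric monoidal functor $\kcobcl\rightarrow\vect$ is the same as a symmetric monoidal functor $\kcob\rightarrow\vect$ whose value on the interval, and hence on every object and cobordism involving intervals, is zero: restriction gives one direction, and extending a closed theory by zero on the whole open sector gives the other. So closed KTFTs are exactly the open--closed KTFTs --- equivalently the structure algebras of Alexeevski and Natanzon --- whose open algebra vanishes, and it remains only to read off the surviving structure. Restricting further to $\cobcl\subset\kcobcl$ gives, by Proposition~\ref{prop:ctft}, a commutative Frobenius algebra $A=F(S^1)$. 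The category $\kcobcl$ is generated over $\cobcl$ by two further cobordisms $S^1\rightarrow S^1$, the orientation-reversing cylinder and the cross-cap (a real projective plane with two open discs removed); applying $F$ to the former produces a linear map $x\mapsto x^*$, and capping the latter with the cup and applying $F$ produces an element $U\in A$. Running through the axioms of a structure algebra and deleting every relation mentioning the (now zero) open algebra leaves exactly $(x^*)^*=x$, $(xy)^*=y^*x^*$, $\langle x^*,y^*\rangle=\langle x,y\rangle$, $(aU)^*=aU$ and $U^2=\sum\alpha_i\beta_i^*$, which are the closed-sector relations of \cite{alexeevskinatanzon}. Modulo Proposition~\ref{prop:ocktft} this is bookkeeping.

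\emph{Second approach (self-contained).} Here one writes down a generators-and-relations presentation of the symmetric monoidal category $\kcobcl$ directly. As in the proof of Proposition~\ref{prop:ctft} the subcategory $\cobcl$ is generated by the multiplication, unit, comultiplication and counit subject to the commutative Frobenius relations, giving $A=F(S^1)$; to pass to $\kcobcl$ one adjoins the orientation-reversing cylinder and the cross-cap cobordism, and each listed identity is then forced by an explicit diffeomorphism of surfaces. Gluing two orientation-reversing cylinders gives an ordinary cylinder, whence $(x^*)^*=x$; complex conjugation on a pair of pants reverses all three boundary circles at once, whence $(xy)^*=y^*x^*$ (here equal to $x^*y^*$ by commutativity); the same trick on the genus-zero Frobenius pairing cobordism gives $\langle x^*,y^*\rangle=\langle x,y\rangle$; a self-diffeomorphism of the once-holed M\"obius band that fixes the incoming boundary circle and reverses the other gives $(aU)^*=aU$; and the classical homeomorphism between $\mathbb{RP}^2\#\mathbb{RP}^2$ and the Klein bottle, realised at the level of surfaces with one boundary circle, identifies the cobordism computing $U^2$ (a pair of pants with both inputs capped by cross-caps) with the one computing $\sum\alpha_i\beta_i^*$ (a once-holed torus reglued along one orientation-reversing cylinder), giving the last relation. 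That this list of relations is complete is the classification of compact surfaces with boundary together with Dyck's theorem $\mathbb{RP}^2\#\mathbb{RP}^2\#\mathbb{RP}^2\cong\mathbb{RP}^2\#T^2$, which lets one push all handles and cross-caps past the Frobenius generators and the two new generators.

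The main obstacle, in the self-contained approach, is exactly this surface topology, carried out with care about boundary parametrisations and about the asymmetry of an orientation-reversing gluing; the crucial point is that two cross-caps can be traded for a handle with a twist, i.e.\ that $\mathbb{RP}^2\#\mathbb{RP}^2$ is the Klein bottle, which is the geometric content of $U^2=\sum\alpha_i\beta_i^*$. In the first approach this obstacle has already been absorbed into the cited Proposition~\ref{prop:ocktft}, and what remains is only the mechanical verification that killing the open algebra collapses the structure-algebra axioms to the five relations above.
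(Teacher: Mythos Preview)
Your proposal is correct and matches the paper's treatment: the paper explicitly notes that the result ``can immediately be deduced from the above result proved in \cite{alexeevskinatanzon}, by setting the open part of a structure algebra to $0$'' (your first approach), and then, in lieu of a full proof, sketches with pictures exactly the generators-and-relations argument you outline in your second approach, including the same diffeomorphisms for the involution, for $(aU)^*=aU$, and for $U^2=\sum\alpha_i\beta_i^*$. The only cosmetic difference is that the paper takes the crosscap generator as a cobordism $\emptyset\to S^1$ rather than $S^1\to S^1$ capped off, which is an equivalent presentation.
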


We will not reproduce a proof of \autoref{prop:cktft}, however we will now briefly recall with pictures where each part of the structure comes from. In pictures of cobordisms we denote a crosscap attached to a surface by a dotted circle with a cross. So for example the following is an unorientable cobordism with an underlying surface made with $1$ handle, $1$ crosscap and $5$ holes:
\[
\begin{xy}
*{\cobord{(0,6),(0,18)}{(20,0),(20,12),(20,24)}{\cstart}{\cstop}{(10,8)}{@i}{(10,16)}},
(-9,5)*\dir2{<},(11,1)*\dir2{>},
(-9,-7)*\dir2{<},(11,11)*\dir2{<},
(11,-13)*\dir2{<}
\end{xy}
\]

\autoref{fig:2cob} shows the generators of the orientable part of $\kcobcl$. 
\begin{figure}[ht!]
\centering
\begin{gather*}
\begin{xy}
*{\cylinder{\cstart}{\cstop}},
(-9,-1)*\dir2{<},(11,-1)*\dir2{<}
\end{xy}\qquad
\begin{xy}
*{\pants{\cstart}{\cstop}},
(-9,5)*\dir2{<},(11,-1)*\dir2{<},(-9,-7)*\dir2{<}
\end{xy}\qquad
\begin{xy}
*{\copants{\cstart}{\cstop}},
(-9,-1)*\dir2{<},(11,5)*\dir2{<},(11,-7)*\dir2{<}
\end{xy}
\\
\begin{xy}
*{\birth{\cstop}},
(5.5,-1)*\dir2{<}
\end{xy}\qquad
\begin{xy}
*{\death{\cstart}},
(-3.5,-1)*\dir2{<}
\end{xy}\qquad
\begin{xy}
*{\twist{\cstart}{\cstop}},
(-9,-7)*\dir2{<},(-9,5)*\dir2{<},(11,5)*\dir2{<},(11,-7)*\dir2{<}
\end{xy}
\end{gather*}
\caption{Generators of $\cobcl$ (considered as a subcategory of $\kcobcl$)}
\label{fig:2cob}
\end{figure}

\begin{figure}[ht!]
\centering
\[
\begin{xy}
*{\cylinder{\cstart}{\cstop}},
(-9,-1)*\dir2{<},(11,1)*\dir2{>}
\end{xy}\qquad
\begin{xy}
*{\rp{\cstop}},
(10.25,-1)*\dir2{<}
\end{xy}
\]
\caption{Additional generators of $\kcobcl$ not in $\cobcl$}
\label{fig:2kcob}
\end{figure}

By moving crosscaps and flipping orientations of boundaries we can decompose any cobordism into an orientable cobordism composed with copies of the two cobordisms in \autoref{fig:2kcob}. For example we can decompose our previous example as:
\[
\begin{xy}
*{\cobord{(0,6),(0,18)}{(20,0),(20,12),(20,24)}{\cstart}{\cstop}{(10,8)}{@i}{(10,16)}},
(-9,5)*\dir2{<},(11,1)*\dir2{>},
(-9,-7)*\dir2{<},(11,11)*\dir2{<},
(11,-13)*\dir2{<}
\end{xy}\qquad \cong \quad
\begin{xy}
*{\cobord{(0,6),(0,18),(0,30)}{(20,0),(20,12),(20,24)}{}{\cstop}{(10,8)}{@i}{@i}},
(-9.5,2)*\dir2{<},(10.5,-4)*\dir2{<},
(-9.5,-10)*\dir2{<},(10.5,8)*\dir2{<},
(10.5,-16)*\dir2{<},(-9.5,14)*\dir2{<},
(-19.75,15)*{\rp{\cstop}},
(20,-3)*{\cylinder{}{\cstop}},(30.5,-2)*\dir2{>},
(-10.5,-9)*{\cstart},
(-10.5,3)*{\cstart}
\end{xy}
\]

This shows us that the cobordisms in \autoref{fig:2cob} and \autoref{fig:2kcob} together generate $\kcobcl$. In particular we see that a closed KTFT is given by a commutative Frobenius algebra $A$ together with a linear map corresponding to the cobordism on the left in \autoref{fig:2kcob} which is clearly an involution and an element $U\in A$ given by the image of $1\in k$ under the map corresponding to the cobordism on the right. That the involution is an anti-automorphism corresponds to the relation
\[
\begin{xy}
*{\pants{\cstart}{\cstop}},
(-9,5)*\dir2{<},(11,1)*\dir2{>},(-9,-7)*\dir2{<}
\end{xy}\qquad\cong\qquad
\begin{xy}
*{\pants{\cstart}{\cstop}},
(-9,7)*\dir2{>},(11,-1)*\dir2{<},(-9,-5)*\dir2{>}
\end{xy}
\]
which can be seen by reflecting the cobordism in a suitable horizontal plane.

The relation $U^2=\sum\alpha_i\beta_i^*$ arises from the fact that $2$ crosscaps are diffeomorphic to a Klein bottle with a hole which can be decomposed into orientable surfaces:
\[
\begin{xy}
*{\pants{}{\cstop}},
(10.5,-1)*\dir2{<},
(-20,6)*{\rp{\cstop}},
(-20,-6)*{\rp{\cstop}}
\end{xy}\qquad\cong\quad
\begin{xy}
*{\copair{\cstop}},
(8,5)*\dir2{<},(8,-7)*\dir2{<}
\end{xy}\circ
\begin{xy}
(0,6)*{\cylinder{\cstart}{\cstop}},
(0,-6)*{\cylinder{\cstart}{\cstop}},
(-9,5)*\dir2{<},(11,5)*\dir2{<},(-9,-7)*\dir2{<},(11,-5)*\dir2{>}
\end{xy}\circ
\begin{xy}
*{\pants{\cstart}{\cstop}},
(-9,5)*\dir2{<},(11,-1)*\dir2{<},(-9,-7)*\dir2{<}
\end{xy}
\]

Finally the relation $(aU)^*=aU$ can be seen by considering a M\"obius strip (which is equivalent to a crosscap) with a hole:

\begin{gather*}
\begin{xy}
*{\pants{}{\cstop}},
(10.5,1)*\dir2{>},
(-20,6)*{\rp{\cstop}},
(-9.5,-7)*\dir2{<},
(-10.5,-6)*{\cstart}
\end{xy}\qquad\cong\quad
\begin{xy}
(-12.5,0)*{\copair{}},
(0,6)*{\shortcylinder{}{}},
(0,-6)*{\flip{}{}},
(12.5,0)*{\pair{}},
(-10,-0.5)*\dir2{<},
(16.2,1)*\dir2{>},
(-12,0)*{\puncture}
\end{xy}\\\\
\cong\quad
\begin{xy}
(-12.5,0)*{\copair{}},
(0,6)*{\shortcylinder{}{}},
(0,-6)*{\flip{}{}},
(12.5,0)*{\pair{}},
(-10,0.5)*\dir2{>},
(16.2,1)*\dir2{>},
(-12,0)*{\puncture}
\end{xy}
\quad\cong\qquad
\begin{xy}
*{\pants{}{\cstop}},
(10.5,-1)*\dir2{<},
(-20,6)*{\rp{\cstop}},
(-9.5,-7)*\dir2{<},
(-10.5,-6)*{\cstart}
\end{xy}
\end{gather*}
Here the second diffeomorphism can be seen by pushing the left hole once around the M\"obius strip (so its orientation changes when it passes through the twist).

It is not too difficult to convince oneself that these relations generate all relations and hence give a sufficient set of relations.

Open KTFTs are however our main object of study. We will prove the following result for open KTFTs later as a corollary of our approach using operads.

\begin{proposition}\label{prop:oktft}
Open Klein topological field theories of dimension $2$ are equivalent to symmetric Frobenius algebras together with an involutive anti-automorphism $x\mapsto x^*$ preserving the pairing.
\end{proposition}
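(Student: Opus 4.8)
The plan is to mirror the analysis of $\kcobcl$ carried out above: first obtain a generators-and-relations presentation of the symmetric monoidal category $\kcobo$, and then translate it into algebra. Write $A$ for the vector space assigned to a single interval. Identifying $\kcobo$ with its skeleton, recall that the oriented subcategory $\cobo$ is generated, as a symmetric monoidal category, by the open pair of pants $I\sqcup I\to I$, its reverse $I\to I\sqcup I$, the unit $\emptyset\to I$ and the counit $I\to\emptyset$, and that by Proposition \ref{prop:otft} this data is precisely a symmetric Frobenius algebra structure on $A$. I would then argue that every open cobordism is diffeomorphic to one built from an oriented cobordism by gluing on copies of a single additional generator, the \emph{flip strip} $\tau\co I\to I$ — the strip that reverses the identification of one of its two boundary intervals with $I$, the open analogue of the left-hand cobordism of Figure \ref{fig:2kcob}. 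The point is that a non-orientable free-boundary feature can be reduced to copies of $\tau$ by cutting along a suitable arc running out to the free boundary; because of this extra flexibility of the free boundary, no separate crosscap-type generator is needed here, in contrast to the closed case (where the relation $U^2=\sum\alpha_i\beta_i^*$ is exactly the statement that a \emph{double} crosscap, but not a single one, can be so reduced).

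It then remains to read off the relations involving $\tau$. Writing $x\mapsto x^*$ for the linear map on $A$ assigned to $\tau$: untwisting $\tau\circ\tau$ to the identity cylinder gives $(x^*)^*=x$; reflecting the open pair of pants, which simultaneously exchanges its two inputs and reverses the identifications on all three boundary intervals, gives $(xy)^*=y^*x^*$; and reversing the identifications on both boundary intervals of the cobordism defining the Frobenius pairing gives $\langle x^*,y^*\rangle=\langle x,y\rangle$. Compatibility of $\tau$ with the unit, comultiplication and counit then follows (for instance $1^*=1$ because the half-disc admits a boundary-reversing self-diffeomorphism). Conversely, a symmetric Frobenius algebra equipped with an involutive anti-automorphism preserving the pairing determines a symmetric monoidal functor $\kcobo\to\vect$ sending the oriented generators to the Frobenius operations and $\tau$ to $*$; the presentation above makes this well defined, and the two assignments are manifestly mutually inverse, yielding the equivalence.

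The technical heart of the argument, and the step I expect to be the main obstacle, is establishing the presentation of $\kcobo$: showing that the oriented generators together with the single flip $\tau$, subject to exactly the relations above, suffice. This is an open, Klein analogue of the classification results of Lauda and Pfeiffer \cite{laudapfeiffer} in the oriented open case and of Alexeevski and Natanzon \cite{alexeevskinatanzon} in the closed Klein case, and requires a careful study of compact non-orientable surfaces with free boundary and of the relevant mapping class groups. In the body of the paper this obstacle is bypassed via the operadic approach: we will show that the open KTFT modular operad is $\oktft\cong\modmass$, the modular closure of the cyclic operad $\mass$ governing associative algebras with an involutive anti-automorphism, and an algebra over the modular closure of such a cyclic operad — with its invariant pairing supplied by the modular structure — is exactly a symmetric Frobenius algebra together with a pairing-preserving involutive anti-automorphism. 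Proposition \ref{prop:oktft} is then an immediate corollary.
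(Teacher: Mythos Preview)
Your proposal is correct, and your final paragraph already describes the paper's actual route: Proposition~\ref{prop:oktft} is deduced as an immediate corollary of Theorem~\ref{thm:oktftmass} ($\oktft\cong\modmass$), together with the observation that an algebra over the modular closure of a cyclic operad is just a cyclic algebra over its genus~$0$ part --- here a symmetric Frobenius algebra with an involutive anti-automorphism preserving the pairing. The only discrepancy is that the operadic formulation yields the non-unital statement; the unit and counit come from the discs excluded from the modular operad (Remark~\ref{rem:nounit}), exactly as you anticipate.

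Your direct sketch via generators and relations of $\kcobo$ is a sound alternative, and you correctly locate its hard step. It is worth noting that the operadic proof does not truly sidestep this work: injectivity in Theorem~\ref{thm:oktftmass} is established by reducing every M\"obius graph to a normal form using two explicit graph identities (relations~\eqref{eqn:rel1} and~\eqref{eqn:rel2}, the latter encoding that three crosscaps equal a handle plus a crosscap), together with the known result for $\modass$. This normal-form argument is essentially the same combinatorics as proving sufficiency of your proposed relations in $\kcobo$, just organized on the graph side rather than the surface side. What the operadic packaging buys is a clean separation of concerns and, crucially for the rest of the paper, the identification of $\oktft$ itself as $\modmass$, which feeds into the Koszul duality and KTCFT results that follow.
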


\begin{examples}
\Needspace*{4\baselineskip}\mbox{}
\begin{itemize}
\item Any matrix algebra over a field is a symmetric Frobenius algebra with $\langle A,B \rangle = \operatorname{tr} AB$. With an involution given by the transpose we obtain an open KTFT.
\item Let $G$ be a finite group. Then the group algebra $\mathbb{C}[G]$ is a symmetric Frobenius algebra with bilinear form $\langle a,b \rangle$ given by the coefficient of the identity element in $ab$. Define an involution as the linear extension of $g\mapsto g^{-1}$. This is an open KTFT.
\item If $G$ is abelian, then $\mathbb{C}[G]$ forms a closed KTFT with \mbox{$U=\frac{1}{\sqrt{|G|}}\sum g^2$}.
\end{itemize}
\end{examples}

\section{Preliminaries on operads}
We wish to reinterpret KTFTs in the language of modular operads. To make clear our notation we recall here some of the relevant notions from the theory of operads.

\subsection{Graphs, trees, operads and modular operads}
In this section we will outline the notation we will use and recall for convenience the definitions of (modular) operads with some minor modifications. For full details see Ginzburg and Kapranov \cite{ginzburgkapranov} and Getzler and Kapranov \cite{getzlerkapranov}.

We need the notions of graphs and trees. A graph is what we expect but we allow graphs with external half edges (legs). Precisely a graph can be defined as follows:

\begin{definition}
A graph $G$ consists of the following data:
\begin{itemize}
\item Finite sets $\vertices(G)$ and $\halfedges(G)$ with a map $\lambda\co\halfedges(G)\rightarrow\vertices(G)$
\item An involution $\sigma\co\halfedges(G)\rightarrow\halfedges(G)$
\end{itemize}
The set $\vertices(G)$ is the set of \emph{vertices} of $G$ and $\halfedges(G)$ is the set of \emph{half edges} of $G$. A half edge $a$ is connected to a vertex $v$ if $\lambda(a)=v$. We denote the set of half edges connected to $v$ by $\flags(v)$ and we write $n(v)$ for the cardinality of $\flags(v)$ (the \emph{valence} of $v$). Two half edges $a\neq b$ form an \emph{edge} if $\sigma(a)=b$. The set $\edges(G)$ is the set of unordered pairs of half edges forming an edge. We call half edges that are fixed by $\sigma$ the \emph{legs} of $G$ and denote the set of legs as $\legs(G)$.
\end{definition}

\begin{definition}
An isomorphism of graphs $f\co G\rightarrow G'$ consists of bijections $f_1\co\vertices(G)\rightarrow\vertices(G')$ and  $f_2\co\halfedges(G)\rightarrow\halfedges(G')$ satisfying $\lambda\circ f_2 = f_1\circ\lambda$ and $\sigma\circ f_2 = f_2\circ\sigma$.
\end{definition}

Given a graph $G$ we can associate a finite $1$--dimensional cell complex $|G|$ in the obvious way with $0$--cells corresponding to vertices and the ends of legs and $1$--cells corresponding to edges and legs. We say $G$ is \emph{connected} if $|G|$ is connected.

\begin{definition}
By a \emph{tree} we mean a connected graph $T$ with at least $2$ legs such that $\dim{H_1(|T|)}=0$ (equivalently $|T|$ is contractible).
\end{definition}

\begin{definition}
A \emph{labelled graph} is a connected non-empty graph $G$ together with a labelling of the $n$ legs of $G$ by the set $\{1,\ldots,n\}$ and a map $g\co \vertices(G)\rightarrow \mathbb{Z}_{\geq 0}$. We call the value $g(v)$ the \emph{genus} of $v$. The genus of a labelled graph $G$ is defined by the formula:
\[g(G) = \dim{H_1(|G|)}+\sum_{v\in\vertices(G)}g(v)\]
Clearly this is the number of loops in the graph obtained by gluing $g(v)$ loops to each vertex $v$ of the underlying graph and contracting all internal edges that are not loops. A vertex of a labelled graph is called \emph{stable} if $2g(v)+n(v)>2$. A labelled graph is called stable if all its vertices are stable. An extended stable graph is defined in the same way except a vertex is called extended stable if $2g(v)+n(v) \geq 2$. An isomorphism of labelled graphs is an isomorphism of graphs preserving the label of each leg and the genus of each vertex.
\end{definition}

\begin{definition}
 By a \emph{labelled tree} we mean a tree $T$ with $n+1\geq 2$ legs with a labelling of the legs by the set $\{1,\ldots,n+1\}$. Given such a labelled tree we call the leg labelled by $n+1$ the output or root of $T$ and the other legs the inputs of $T$, denoted $\inputs(T)$. This induces  a direction on the tree where each half edge is directed towards the output and given a vertex $v$ we write $\inputs(v)\subset\flags(v)$ for the set of $n(v)-1$ incoming half edges at $v$. Note that $n(v)\geq 2$ for all vertices $v$. We call $v$ \emph{reduced} if $n(v)>2$. We call a labelled tree reduced if all its vertices are reduced. An isomorphism of labelled trees is an isomorphism of trees that preserves the labelling. 

We denote by $\edges^+(T)=\edges(T)\cup\inputs(T)$ the set of internal edges together with the inputs of $T$.
\end{definition}

\begin{remark}
Note that a labelled tree is equivalent to an extended stable graph of genus $0$ (by assigning a genus of $0$ to each vertex). Reduced trees can then be thought of as stable graphs of genus $0$. We use the term `reduced' as opposed to `stable' here to emphasise the fact that we do not consider the vertices as having a genus.
\end{remark}

Given a labelled graph $G$ we denote by $G/e$ the labelled graph obtained by contracting the internal edge $e$. The genus of each of the vertices of $G/e$ is defined in the natural way, so that the overall genus of the graph remains constant. More precisely, if we contract an edge $e$ connected to two different vertices $v_1$ and $v_2$ into a single vertex $v$ then we set $g(v)=g(v_1)+g(v_2)$. If we contract an edge $e$ connected to a single vertex $v$ (so $e$ is a loop) then the genus of $v$ increases by one.

Observe that if we contract multiple edges it does not matter (up to isomorphism) in which order we contract them. We write $\Gamma((g,n))$ for the category of extended stable graphs of genus $g$ with $n$ legs with morphisms generated by isomorphisms of labelled graphs and edge contractions. For a labelled tree $T$ we define $T/e$ similarly. We denote by $T((n))$ the category of trees with $n$ legs. By an $n$--tree we mean a tree with $n$ inputs (equivalently $n+1$ legs). We denote by $T(n)$ the category of $n$--trees. Note that $T(n)$ is isomorphic to $T((n+1))$ and $\Gamma((0,n+1))$.

We can glue graphs with legs. If $G'$ has $n>0$ legs and $G$ has $m>0$ legs then we write $G\circ_i G'$ for the graph obtained by gluing the leg of $G'$ labelled by $n$ to the leg of $G$ labelled by $i$. For trees this corresponds to gluing the output of one tree to the $i$--th input of the other.

\begin{definition}
Let $k$ be a field.
\begin{itemize}
\item The symmetric monoidal category $\vect$ is the category of vector spaces over $k$ with the tensor product.
\item The symmetric monoidal category $\dgvect$ is the category of differential graded vector spaces over $k$ with morphisms given by chain maps and with symmetry $s\co V\otimes W\rightarrow W\otimes V$ given by $s(v\otimes w)=(-1)^{\degree{v}\degree{w}}w\otimes v$. Here $\degree{v}$ and $\degree{w}$ are the degrees of the homogeneous elements $v$ and $w$.
\item The symmetric monoidal category $\topol$ is the category of topological spaces with the usual product.
\end{itemize}
\end{definition}

Fix $\mathcal{C}$ to be one of the symmetric monoidal categories $\vect$, $\dgvect$ or $\topol$.

\begin{definition}
\Needspace*{4\baselineskip}\mbox{}
\begin{itemize}
\item An \emph{$S$--module} is a collection $V=\{V(n) : n\geq 1\}$ with $V(n)\in\ob\mathcal{C}$ equipped with a left action of $S_n$ (the symmetric group on $n$ elements) on $V(n)$.
\item A \emph{cyclic $S$--module} is a collection $U=\{U((n)) : n\geq 2\}$ with $U((n))\in\ob\mathcal{C}$ equipped with a left action of $S_n$ on $U((n))$.
\item An \emph{extended stable $S$--module}\footnote{This differs slightly from the definition in \cite{chuanglazarev} since we also allow the pair $(g,n)=(1,0)$. This makes very little difference in practice however.} is a collection $W=\{W((g,n)) : n,g \geq 0\}$ with $W((g,n))\in\ob\mathcal{C}$ equipped with a left action of $S_n$ on $W((g,n))$ and where $W((g,n))=0$ whenever $2g+n \leq 1$. We call an extended stable $S$--module a \emph{stable $S$--module} if $W((g,n))=0$ whenever $2g+n \leq 2$.
\end{itemize}
A morphism of (cyclic/extended stable) $S$--modules is given by a collection of $S_n$--equivariant morphisms.
\end{definition}

\begin{remark}\label{rem:cyclic}
Note that a cyclic $S$--module can also be defined as an $S$--module $V$ with an action of $S_{n+1}$ extending the action of $S_n$ on $V(n)$. This can be seen by setting $V((n))=V(n-1)$. Similarly given a cyclic $S$--module $U$, by restricting to the action of $S_{n}\subset S_{n+1}$ on $U(n)=U((n+1))$ we see that a cyclic $S$--module has an underlying $S$--module.
\end{remark}

Given an $S$--module $V$ and a finite set $I$ with $n$ elements we define
\[
V(I)=\left(\bigoplus_{f\in\iso([n],I)}V(n)\right)_{S_n}
\]
the coinvariants with respect to the simultaneous action of $S_n$ on $\iso([n],I)$ and $V(n)$ (where $[n]=\{1,\ldots,n\}$). Similarly given a cyclic $S$--module $U$ we define
\[
U((I))=\left(\bigoplus_{f\in\iso([n],I)}U((n))\right)_{S_n}
\]
and given an extended stable $S$--module $W$ we define:
\[
W((g,I))=\left(\bigoplus_{f\in\iso([n],I)}W((g,n))\right)_{S_n}
\]

\begin{remark}
For simplicity we have used direct sums above since we shall normally be working in the category of (differential graded) vector spaces. More generally one should use coproducts so, for example, in the case that $V$ is an $S$--module in $\topol$ direct sums in the above definitions are replaced by disjoint unions.
\end{remark}

If $T$ is a labelled tree and $V$ is an $S$--module then we define the space of $V$--decorations on $T$ as:
\[
V(T) = \bigotimes_{v\in\vertices(T)}V(\inputs(v))
\]
Similarly for $U$ a cyclic $S$--module the space of $U$--decorations on $T$ is
\[
U((T)) = \bigotimes_{v\in\vertices(T)}U((\flags(v)))
\]
and for $W$ an extended stable module and $G$ an extended stable graph we define the space of $W$--decorations on $G$ as:
\[
W((G)) = \bigotimes_{v\in\vertices(G)}W((g(v),\flags(v)))
\]

Given an isomorphism of labelled graphs $G\rightarrow G'$ or labelled trees $T\rightarrow T'$ there are induced isomorphisms on the corresponding spaces of decorations. 

Note that if $W$ is a stable $S$--module, then $W((G))=0$ unless $G$ is also stable.

\begin{definition}
We define an endofunctor $\mathbb{O}$ on the category of $S$--modules by the formula:
\[
\mathbb{O}V(n)=\operatorname*{colim}_{T\in\iso T(n)}V(T)
\]

We define an endofunctor $\mathbb{C}$ on the category of cyclic $S$--modules by the formula:
\[
\mathbb{C}U((n))=\operatorname*{colim}_{T\in\iso T((n))}U((T))
\]

We define an endofunctor $\mathbb{M}$ on the category of extended stable $S$--modules by the formula:
\[
\mathbb{M}W((g,n))=\operatorname*{colim}_{G\in\iso\Gamma((g,n))}W((G))
\]

Each of these endofunctors can be given the structure of a monad (triple) in the natural way as shown by Getzler and Kapranov \cite{getzlerkapranov}. We call an algebra over these monads an operad, a cyclic operad and an extended modular operad respectively.  A modular operad is an extended modular operad whose underlying $S$--module is stable.
\end{definition}

We use the term `extended modular operad' to bring our definitions closer to \cite{getzlerkapranov,chuanglazarev}. However we are not concerned with the distinction between a modular operad and an extended modular operad. Therefore we will from now on use the term `modular operad' to mean extended modular operad unless explicitly stated otherwise.

\begin{remark}\label{rem:classicaloperaddefs}
We can unpack these somewhat technical definitions to gain more concrete descriptions closer to the classical definition of operads.
\begin{itemize}
\item An operad is an $S$--module $\mathcal{P}$ together with composition maps $\circ_i\co\mathcal{P}(n)\otimes\mathcal{P}(m)\rightarrow\mathcal{P}(n+m-1)$ for $n,m\geq 1$, $1\leq i\leq n$. These maps must satisfy equivariance and associativity conditions.
\item A cyclic operad is a cyclic $S$--module $\mathcal{Q}$ together with composition maps $\circ_i\co\mathcal{Q}((n))\otimes\mathcal{Q}((m))\rightarrow\mathcal{Q}((n+m-2))$ for $n,m\geq 2$, $1\leq i\leq n$. These maps must satisfy equivariance and associativity conditions.
\item A modular operad is an extended stable $S$--module $\mathcal{O}$ together with composition maps $\circ_i\co\mathcal{O}((g,n))\otimes\mathcal{O}((g',m))\rightarrow\mathcal{O}((g+g',n+m-2))$ for  $n,m\geq 1$, $1\leq i \leq n$ and contraction maps $\xi_{ij}\co\mathcal{O}((g,n))\rightarrow\mathcal{O}((g+1,n-2))$ for $n\geq 2$, $1\leq i\neq j \leq n$. These maps must satisfy equivariance and associativity conditions.
\end{itemize}
\end{remark}

We can understand the associativity and equivariance conditions mentioned in \autoref{rem:classicaloperaddefs} in a simple way using trees and graphs as in \cite{getzlerkapranov,ginzburgkapranov}. Given a tree $T$ with a vertex $v$ with $n(v)=n$ and an $S$--module $V$ we observe that choosing a particular direct summand representing $V(\inputs(v))$ is equivalent to choosing a labelling of $\inputs(v)$ by the set $[n-1]$. Similarly given a cyclic $S$--module $U$ choosing a particular direct summand representing $U((\flags(v)))$ is equivalent to choosing a labelling of $\flags(v)$ by $[n]$. Given an extended stable graph $G$ with a vertex $v$ and an extended stable module $W$ choosing a particular direct summand representing $W((g(v),\flags(v)))$ is equivalent to choosing a labelling of $\flags(v)$ by $[n]$.

By choosing appropriate labellings of $\inputs(v)$ or $\flags(v)$ at two vertices connected by the edge $e$ in a tree $T$ we can use the composition maps of an operad $\mathcal{P}$ or a cyclic operad $\mathcal{Q}$ to define a map $\mathcal{P}(T)\rightarrow\mathcal{P}(T/e)$ or $\mathcal{Q}((T))\rightarrow\mathcal{Q}((T/e))$ in the obvious way by considering $\circ_i$ as gluing the output (labelled by $n(v)$ in the cyclic case) at one vertex to the $i$--th input/leg (the leg labelled by $i$) at the other vertex. The equivariance condition simply says that this is well defined regardless of the particular labellings (choice of direct summands) we choose. The associativity condition corresponds to these maps assembling to a well defined functor on $T(n)$ or $T((n))$. Precisely this simply means that no matter in which order we contract the edges of a tree $T$, the induced map on $\mathcal{P}(T)$ or $\mathcal{Q}((T))$ is the same.

In the case of modular operads the same applies but since we are using graphs the edge $e$ could be a loop at a vertex $v$ and then we must use the contraction maps, considering $\xi_{ij}$ as gluing together the half edges making up $e$ labelled by $i$ and $j$ to define a map $\mathcal{O}((G))\rightarrow\mathcal{O}((G/e))$.

\begin{definition}
A \emph{unital} operad is an operad $\mathcal{P}$ with an element $1\in\mathcal{P}(1)$ such that $1\circ_1 a=a=a\circ_i 1$ for any $a\in\mathcal{P}(n)$ with $n\geq 1$ and $1\leq i\leq n$.
\end{definition}

For completeness we note the following lemma/alternative definition which follows from considering \autoref{rem:cyclic}. This allows us to ask whether an operad can be given the additional structure of a cyclic operad.

\begin{lemma}\label{lem:cyclic}
A cyclic operad is a cyclic $S$--module $\mathcal{Q}$ whose underlying $S$--module has the structure of an operad such that $(a\circ_m b)^* = b^*\circ_1 a^*$ for any $a\in\mathcal{Q}(m), b\in\mathcal{Q}(n)$ where $c^*$ is the result of applying the cycle $(1\quad 2\,\ldots\, n+1)\in S_{n+1}$  to $c\in \mathcal{Q}(n)=\mathcal{Q}((n+1))$
\end{lemma}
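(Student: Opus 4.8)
The plan is to exploit Remark \ref{rem:cyclic}: under the identification $\mathcal{Q}((n+1))=\mathcal{Q}(n)$, a cyclic $S$--module is the same datum as an $S$--module together with, for each $n$, an action of $S_{n+1}$ on $\mathcal{Q}(n)$ extending the given $S_n$--action, and the operator $c\mapsto c^*$ is the action of the long cycle $\tau_{n+1}=(1\ 2\ \cdots\ n+1)$. Since $S_{n+1}$ is generated by $S_n$ together with $\tau_{n+1}$, once we know the operad structure, the $S_n$--actions and the single operator $*$, the rest of the cyclic bookkeeping is forced; the content of the lemma is that exactly one extra relation controls the interaction of $*$ with composition.

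For the forward implication, suppose $\mathcal{Q}$ is a cyclic operad. Using the description of cyclic composition recalled after Remark \ref{rem:classicaloperaddefs} --- the maps $\circ_i$ are assembled from the edge--contraction maps $\mathcal{Q}((T))\to\mathcal{Q}((T/e))$ on two--vertex trees, gluing the distinguished output leg of one decoration to the leg labelled $i$ of the other --- I would define the underlying operad structure on $\mathcal{Q}(n)=\mathcal{Q}((n+1))$ by letting $a\circ_i b$ be the cyclic composition that glues the output leg of $b$ into the leg of $a$ labelled $i$ and keeps the output leg of $a$ as the output of the two--vertex tree, with the remaining legs relabelled in the evident order. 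The operad equivariance and associativity axioms are then literally the cyclic equivariance and associativity axioms restricted to those trees in which one leg at every vertex has been marked as an output, so they hold for free. The relation $(a\circ_m b)^*=b^*\circ_1 a^*$ is the one genuine computation: on the two--vertex tree $T$ underlying $a\circ_m b$ (with $a$ at $v$, $b$ at $w$, the output of $b$ glued to the leg of $a$ labelled $m$), rotating the legs of the contracted corolla $T/e$ by $\tau$ can be realised on $\mathcal{Q}((T))$ by cyclic equivariance as a compatible rotation of the legs at $v$ and at $w$; tracking which legs move shows the rotated decoration is precisely the one with $a^*$ at $v$, $b^*$ at $w$, the output of $a^*$ glued into the leg of $b^*$ labelled $1$, and the output of $b^*$ as the output. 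Contracting the edge of this tree yields $b^*\circ_1 a^*$.

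For the converse, suppose the underlying $S$--module of a cyclic $S$--module $\mathcal{Q}$ carries an operad structure satisfying the stated relation. I would reconstruct every cyclic composition from the operad composition and $*$: to glue the leg of $a\in\mathcal{Q}((p))$ labelled $i$ to the leg of $b\in\mathcal{Q}((q))$ labelled $j$, apply suitable permutations in $S_p$ and $S_q$ moving leg $i$ into the output slot and leg $j$ into an input slot, perform the operad composition, and relabel back. Well--definedness of this recipe, and then the cyclic equivariance and associativity axioms, reduce --- via the generation of $S_{n+1}$ by $S_n$ and $\tau_{n+1}$ --- to the operad axioms together with the displayed relation and the identities obtained by iterating it (these express $(a\circ_i b)^*$, and the compositions $\circ_i$ for all $i$, in terms of $\circ_1$, $\circ_m$ and the symmetric group actions). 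The main obstacle, and the only place where care is genuinely needed, is exactly this last reduction: one must check that no compatibility beyond the single relation $(a\circ_m b)^*=b^*\circ_1 a^*$ is required, which is done by normalising every composite so that each composition is carried out at the output leg before any rotation is applied, and then comparing normal forms using the relation.
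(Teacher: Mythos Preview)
Your argument is correct and follows exactly the line the paper gestures at. Note, however, that the paper does not actually give a proof of this lemma: it is introduced with the sentence ``we note the following lemma/alternative definition which follows from considering Remark~\ref{rem:cyclic}'' and is left at that. Your write-up supplies precisely the details the paper omits, and the mechanism you use --- that $S_{n+1}$ is generated by $S_n$ together with the long cycle, so that compatibility of the cyclic structure with composition is governed by a single relation --- is the content of Remark~\ref{rem:cyclic} as the paper intends it. So there is nothing to compare: you have written out what the paper asserts without proof, and your approach is the standard one (this characterisation goes back to Getzler and Kapranov's original work on cyclic operads).
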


There is clearly a functor from cyclic operads to operads. Given a modular operad $\mathcal{O}$, the genus $0$ part consisting of the spaces $\mathcal{O}((0,n))$ forms a cyclic operad. This gives a functor from modular operads to cyclic operads. If $\mathcal{Q}$ is a cyclic operad then the \emph{modular closure}\footnote{This is also sometimes called the modular envelope and denoted $\mathbf{Mod}(\mathcal{Q})$ as in \cite{costello1}.} $\modc{\mathcal{Q}}$ is the left adjoint functor to this functor and the \emph{na\"ive closure} $\underline{\mathcal{Q}}$ is the right adjoint.

The modular closure is obtained from $\mathcal{Q}$ by freely adjoining the contraction maps and imposing only those relations necessary for associativity and equivariance to still hold. The na\"ive closure is obtained by setting all contraction maps to zero.

\begin{definition}
Given $A,B\in\vect$, by $\Hom(A,B)\in\vect$ we mean the space of linear maps from $A$ to $B$. Given $A',B'\in\dgvect$, by $\Hom(A',B')^n$ we mean the vector space of homogeneous linear maps of degree $n\in\mathbb{Z}$ (maps of vector spaces $f\co A'\rightarrow B'$ such that $f(A'_i)\subset B'_{i+n}$). By $\Hom(A',B')\in\dgvect$ we mean the space of graded linear maps $\bigoplus_n\Hom(A',B')^n$ equipped with differential given by $dm = d_1\circ m - (-1)^{\degree{m}}m\circ d_2$ where $d_1$ and $d_2$ are the differentials on $B'$ and $A'$ respectively and $m$ is a homogeneous map of degree $\degree{m}$.
\end{definition}

\begin{definition}
Let $\mathcal{C}$ be one of $\vect$ or $\dgvect$ and let $V\in\ob\mathcal{C}$. The \emph{endomorphism operad} of $V$, denoted $\End[V]$, is defined as having underlying $S$--module given by setting $\End[V](n)=\Hom(V^{\otimes n},V)$ with the natural action of $S_n$. Composition maps are given by composing morphisms in the obvious way.

Now assume we have a symmetric non-degenerate bilinear form $\langle -, - \rangle \co V\otimes V\rightarrow k$. We define the \emph{endomorphism cyclic operad} of $V$ as having underlying cyclic $S$--module $\mathcal{E}[V]((n))=V^{\otimes n}$ with the natural action of $S_n$. If $a\in V^{\otimes n}$ and $b\in V^{\otimes m}$ then $a\circ_i b\in V^{\otimes(n+m-2)}$ is defined by contracting $a\otimes b$ with the bilinear form, applied to the $i$--th factor of $a$ and the $m$--th factor of $b$. Using the isomorphism $V^{\otimes(n+1)}\cong\Hom(V^{\otimes n},V)$ we see the underlying operad of the endomorphism cyclic operad is just the endomorphism operad. We define the \emph{endomorphism modular operad} as having underlying $S$--module $\mathcal{E}[V]((g,n))=V^{\otimes n}$ with composition maps defined as for the endomorphism cyclic operad and for $a\in\mathcal{E}[V]((g,n))$ we define $\xi_{ij}(a)\in\mathcal{E}[V]((g+1,n-2))$ by contracting the $i$--th factor and the $j$--th factor of $a$ using the bilinear form.
\end{definition}

\begin{definition}
Given an operad $\mathcal{P}$ in $\vect$ or $\dgvect$  an algebra over $\mathcal{P}$ is a vector space/differential graded vector space $V$ together with a morphism of operads $\mathcal{P}\rightarrow\End[V]$. Similarly an algebra over a cyclic/modular operad $\mathcal{O}$ is a vector space/differential graded vector space $V$ with a symmetric non-degenerate bilinear form $B$, together with a morphism of cyclic/modular operads $\mathcal{O}\rightarrow\mathcal{E}[V]$.
\end{definition}

Clearly algebras over various types of operads can be given by a collection of maps in $\Hom(V^{\otimes n},V)$ satisfying certain conditions.

\begin{remark}
  We will refer to operads in the category $\dgvect$ as dg operads and operads in the category $\topol$ as topological operads. Obviously by considering a vector space as concentrated in degree $0$ with the zero differential we can consider $\vect$ as a subcategory of $\dgvect$ and hence an operad in $\vect$ can be considered as a dg operad.
\end{remark}

\subsection{Quadratic operads and Koszul duality}\label{subsec:koszulduality}
We now restrict ourselves to operads and recall the theory of Koszul duality from \cite{ginzburgkapranov}. Let $k$ be a field and let $K$ be an associative unital $k$--algebra. All our operads in this section are required to be unital.

\begin{definition}
A \emph{$K$--collection} is a collection $E=\{E(n) : n\geq 2\}$ of $k$--vector spaces equipped with the following structures:
\begin{itemize}
\item A left $S_n$ action on $E(n)$ for each $n\geq 2$
\item A $(K,K^{\otimes n})$--bimodule structure on $E(n)$ that is compatible with the $S_n$ action. This means for any $\sigma\in S_n$, $\mu,\lambda_i\in K$ and $a\in E(n)$ we have $\sigma(\mu a) = \mu\sigma(a)$ and
\[
\sigma(a(\lambda_1\otimes\ldots\otimes\lambda_n)) = \sigma(a)(\lambda_{\sigma(1)}\otimes\ldots\otimes\lambda_{\sigma(n)})
\]
\end{itemize}
\end{definition}

By setting $E(1)=K$, a $K$--collection should be thought of as an $S$--module $E$ together with composition maps $\circ_i\co E(n)\otimes E(1)\rightarrow E(n)$ and $\circ_1\co E(1)\otimes E(n)\rightarrow E(n)$ satisfying associativity and equivariance conditions. A morphism of $K$--collections is then a morphism of the underlying $S$--modules that preserve these composition maps.

Given a reduced tree $T$ and a $K$--collection $E$ we define
\[
E(T)=\bigotimes_{v\in \vertices(T)}E(\inputs(v))
\]
where the tensor product is taken over $K$ using the $(K,K^{\otimes\inputs(v)})$--bimodule structure on each $E(\inputs(v))$. Given an isomorphism of trees $T\rightarrow T'$ we have an induced isomorphism $E(T)\rightarrow E(T')$.

Clearly if $\mathcal{P}$ is an operad with $\mathcal{P}(1)=K$ then $\{\mathcal{P}(n) : n\geq 2\}$ is a $K$--collection. Given a $K$--collection $E$ we can form the free operad $F(E)$ consisting of $E$--decorated reduced trees with composition given by gluing trees. More precisely, denoting the category of reduced $n$--trees by $\widehat{T}(n)$, we set
\[
F(E)(n) = \operatorname*{colim}_{T\in\iso \widehat{T}(n)}E(T)
\]
and compositions are induced by the natural maps
\[\circ_i\co E(T)\otimes E(T')\rightarrow E(T)\otimes_K E(T')\cong E(T\circ_i T')\]
where the tensor product over $K$ is using the right $K$--module structure on $E(T)$ corresponding to the $i$--th input.

Let $K$ be semisimple and let $E$ be a finite dimensional $K$--collection with $E(n)=0$ for $n>2$. We will denote the $(K,K^{\otimes 2})$--bimodule also by $E$. Let $R\subset F(E)(3)$ be an $S_3$--stable $(K,K^{\otimes 3})$--sub-bimodule. Let $(R)$ be the ideal in $F(E)$ generated by $R$. We define an operad $\mathcal{P}(K,E,R)=F(E)/(R)$. An operad of type $\mathcal{P}(K,E,R)$ is called a \emph{quadratic operad}.

\begin{definition}
Given a $(K,K^{\otimes n})$--bimodule $E$ with a compatible $S_n$ action we denote by $E^{*}=\Hom_K(E,K)$ the space of (left) $K$--linear maps. This has the natural structure of a $(K^{\mathrm{op}},(K^{\mathrm{op}})^{\otimes n})$--bimodule with the transposed action of $S_n$. We can also equip it with the transposed action of $S_n$ twisted by the sign representation in which case we denote it $E^{\vee}=\Hom_K(E,K)\otimes\mathrm{sgn}_n$.
\end{definition}

\begin{definition}
Given a quadratic operad $\mathcal{P}(K,E,R)$ we can form a $K^{\mathrm{op}}$--collection from $E^{\vee}$. Observe that $F(E^{\vee})(3)=F(E(3))^{\vee}$. Let $R^{\perp}\subset F(E^{\vee})(3)$ be the orthogonal complement of $R$, which is an $S_3$--stable $(K,K^{\otimes 3})$--sub-bimodule. We define the dual quadratic operad $\mathcal{P}^!$ to be
\[
\mathcal{P}^!=\mathcal{P}(K^{\mathrm{op}},E^{\vee},R^{\perp})
\]
\end{definition}

We next briefly recall the definitions and results on the cobar construction and the dual dg operad, full details of which can be found in Ginzburg and Kapranov \cite{ginzburgkapranov}. Recall that for a dg operad $\mathcal{P}$ the cobar construction is the operad $F(\mathcal{P}^*[-\mathrm{1}])$ with differential coming from the internal differential and the unique differential dual to the composition of $\mathcal{P}$. Here we give the construction explicitly.

Let $V$ be a finite dimensional vector space. We denote by $\Det(V)$ the top exterior power of $V$. Given a tree $T$ we set $\det(T)=\Det(k^{\edges(T)})$ and $\Det(T)=\Det(k^{\edges^+(T)})$. We denote by $|T|$ the number of internal edges of $T$.

Let $\mathcal{P}$ be a dg operad with $\mathcal{P}(n)$ finite dimensional and $K=\mathcal{P}(1)$ a semisimple unital $k$--algebra concentrated in degree $0$. We call such a dg operad \emph{admissible} and denote the category of admissible dg operads by $\dgop(K)$. For $n\geq 2$ we construct complexes $C'(\mathcal{P})(n)^s = 0 $ for $s\leq 0$ and
\[
C'(\mathcal{P})(n)^s = \bigoplus_{\substack{n\text{--trees }T\\|T|=s-1}}\mathcal{P}(T)^*\otimes\det(T)
\]
where the direct sums are over isomorphism classes of reduced trees and $\mathcal{P}(T)$ is defined by considering the underlying dg $K$--collection of $\mathcal{P}$ (and so tensor products are taken over $K$).

To define the differential $\delta$ recall if $T_2=T_1/e$ is obtained by contracting an internal edge $e$, we have a composition map $\gamma_{T_1,T_2}\co\mathcal{P}(T_1)\rightarrow\mathcal{P}(T_2)$. We define $\delta$ on the direct summands by maps
\[
\delta_{T'}\co\mathcal{P}(T')^*\otimes\det(T')\rightarrow\bigoplus_{\substack{n\text{--trees }T\\|T|=i+1}}\mathcal{P}(T)^*\otimes\det(T)
\]
for $T'$ an $n$--tree with $|T'|=i$, with
\[
\delta_{T'}=\bigoplus_{\substack{(T,e)\\T'=T/e}}(\gamma_{T,T'})^*\otimes l_e
\]
and $l_e\co\det(T')\rightarrow\det(T)$ is defined by:
\[
l_e(f_1\wedge\ldots\wedge f_i)=e\wedge f_1\wedge\ldots\wedge f_i
\]

Since $\mathcal{P}$ is a dg operad each term of the complex just defined has an internal differential $d$. This is compatible with $\delta$ and we write $C(\mathcal{P})(n)^{\bullet}$ for the total complex of the double complex. These complexes together form a dg $K^{\mathrm{op}}$--collection $C(\mathcal{P})$. 

\begin{definition}
It can be shown (by comparing to the operad $F(\mathcal{P^*[-\mathrm{1}]})$) that $C(\mathcal{P})$ has a natural structure of a dg operad. We call this operad the \emph{cobar construction} of $\mathcal{P}$.
\end{definition}

Let $T$ and $T'$ be $n$--trees and $m$--trees respectively with $|T|=p$ and $|T'|=q$. Composition can be obtained explicitly using the maps $\circ_i\co(\mathcal{P}(T)\otimes\det(T))\otimes(\mathcal{P}(T')\otimes\det(T'))\rightarrow\mathcal{P}(T\circ_i T')\otimes\det(T\circ_i T')$ given by
\[
\begin{split}
(a_1\otimes\ldots\otimes a_{p+1})\otimes(e_1\wedge\ldots\wedge e_p)\circ_i(b_1\otimes\ldots\otimes b_{q+1})(f_1\wedge\ldots\wedge f_q)
\\=
(a_1\otimes\ldots\otimes a_{p+1}\otimes b_1\otimes\ldots\otimes b_{q+1})\otimes(e_1\wedge\ldots\wedge e_p\wedge f_1\wedge\ldots\wedge f_q\wedge e)
\end{split}
\]
where $e$ is the new internal edge formed from gluing the root of $T'$ to the $i$--th input of $T$.

\begin{definition}\label{def:dualdg}
The \emph{dual dg operad} $\dual\mathcal{P}$ is defined as
\[
\dual\mathcal{P}=C(\mathcal{P})\otimes\Lambda
\]
where $\Lambda$ is the determinant operad with $\Lambda(n)=k$ concentrated in degree $1-n$ carrying the sign representation of $S_n$.
\end{definition}

Further, from the definitions, it follows that $\mathcal{P}\mapsto \dual\mathcal{P}$ extends to a contravariant functor $\dual\co\dgop(K)\rightarrow\dgop(K^{\mathrm{op}})$ which takes quasi-isomorphisms to quasi-isomorphisms.

\begin{remark}\label{rem:dualdg}
$\dual\mathcal{P}$ can also be obtained from the cobar construction by shifting the grading by $1-n$, twisting by the sign representation and introducing a sign $(-1)^{(m-1)i-1}$ to the composition $\circ_i\co\dual\mathcal{P}(n)\otimes\dual\mathcal{P}(m)\rightarrow\dual\mathcal{P}(n+m-1)$. If $\mathcal{P}$ is an admissible dg operad concentrated in degree $0$ then the highest non-zero term of $\dual\mathcal{P}$ is in degree $0$ and is given by:
\[
\dual\mathcal{P}(n)^0=\bigoplus_{\substack{n\text{--trees }T\\|T|=n-2}}\mathcal{P}(T)^*\otimes\Det(T)
\]
\end{remark}

To justify the notion of duality we have the following shown by Ginzburg and Kapranov \cite{ginzburgkapranov}:

\begin{theorem}
Let $\mathcal{P}$ be an admissible dg operad. Then there is a canonical quasi-isomorphism $\dual\dual\mathcal{P}\rightarrow\mathcal{P}$.
\end{theorem}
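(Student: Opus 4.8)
The plan is to follow Ginzburg and Kapranov \cite{ginzburgkapranov}: since $\dual$ is built from the cobar construction $C(-)$, linear dualisation and a twist by $\Lambda$, and $\dual$ does not change arities, it suffices to compute $H_\bullet(\dual\dual\mathcal{P}(n))$ for each $n$ and to exhibit a morphism of dg operads $\dual\dual\mathcal{P}\to\mathcal{P}$, natural in $\mathcal{P}$, realising the computation. The first step is to expand $\dual\dual\mathcal{P}(n)=C(C(\mathcal{P})\otimes\Lambda)(n)\otimes\Lambda(n)$. Unwinding the cobar construction twice, a summand is indexed by a reduced $n$--tree $S$ (an ``outer'' tree) together with, for every vertex $v$ of $S$, a reduced tree $T_v$ on the inputs $\inputs(v)$ (an ``inner'' tree). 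Grafting each $T_v$ into the corresponding vertex of $S$ gives a reduced $n$--tree $T$ whose internal edges are $\edges(S)\sqcup\coprod_v\edges(T_v)$. Hence the indexing data is equivalent to a pair $(T,W)$ with $T$ a reduced $n$--tree and $W\subseteq\edges(T)$ (the ``outer'' edges; one recovers $S$ by contracting the edges of $T$ not in $W$). Using $\dim_k\mathcal{P}(m)<\infty$ the two dualisations cancel, $\mathcal{P}(T_v)^{**}=\mathcal{P}(T_v)$, and the $(T,W)$--summand becomes $\mathcal{P}(T)$ tensored with one--dimensional determinant and $\Lambda$--twist factors; the work here is to track the maps $l_e$ of the cobar differential and the grading of $\Lambda$ so that these twists cancel and place the summand in the expected internal degree.

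Second, I would identify the differential $D$ on $\dual\dual\mathcal{P}(n)$ in these terms. It has three pieces: (a) the internal differential of $\mathcal{P}$ on the $\mathcal{P}(T)$--factor, fixing $(T,W)$; (b) a term $\partial_{\mathrm{c}}$ coming from the cobar differentials of the inner copies of $C(\mathcal{P})$, with $(T,W)\mapsto\sum_{e\in\edges(T)\setminus W}\pm(T/e,W)$ (contract an inner edge); (c) a term $\partial_{\mathrm{g}}$, the outer cobar differential, with $(T,W)\mapsto\sum_{e\in\edges(T)\setminus W}\pm(T,W\cup\{e\})$ (promote an inner edge to an outer one, leaving $T$ unchanged). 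The comparison map $\varepsilon\co\dual\dual\mathcal{P}\to\mathcal{P}$ is the cobar--bar counit: on the $(T,W)$--summand it vanishes unless $W=\edges(T)$, in which case it is the operadic composition $\mathcal{P}(T)\to\mathcal{P}(n)$ along $T$. Checking that $\varepsilon$ is a chain map reduces, on a summand with exactly one inner edge $e$, to noting that $\varepsilon\partial_{\mathrm{c}}$ and $\varepsilon\partial_{\mathrm{g}}$ are both the composition $\mathcal{P}(T)\to\mathcal{P}(n)$ --- because $\mathcal{P}(T)\to\mathcal{P}(T/e)\to\mathcal{P}(n)$ equals $\mathcal{P}(T)\to\mathcal{P}(n)$ by associativity --- and cancel by the sign conventions; compatibility with composition and the $S_n$--actions, and naturality in $\mathcal{P}$, are then immediate.

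Third, I would run the spectral sequence of the filtration $F_p=\bigoplus_{|T|\le p}(\cdots)$. Since $\partial_{\mathrm{c}}$ strictly lowers $|T|$ while (a) and $\partial_{\mathrm{g}}$ preserve it, and $|T|\le n-2$ for reduced $n$--trees, this is a bounded filtration and the spectral sequence converges. On the associated graded the differential is $d_{\mathcal{P}}+\partial_{\mathrm{g}}$, which decomposes as a direct sum over reduced $n$--trees $T$ with $|T|=p$ of $\mathcal{P}(T)$ (with its internal differential) tensored with $\bigl(\bigoplus_{W\subseteq\edges(T)}k\cdot e_W,\ \partial_{\mathrm{g}}\bigr)$. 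This last complex is isomorphic to $\bigotimes_{e\in\edges(T)}\bigl(k\xrightarrow{\ \sim\ }k\bigr)$, hence acyclic whenever $\edges(T)\neq\emptyset$ and equal to $k$ in degree $0$ when $T$ is the $n$--corolla. Therefore $E_1$ is concentrated in filtration degree $0$, where it is $H_\bullet(\mathcal{P}(n))$; there is no room for further differentials, so $H_\bullet(\dual\dual\mathcal{P}(n))\cong H_\bullet(\mathcal{P}(n))$. As $\varepsilon$ preserves the filtration and induces the identity on $E_1$, it is a quasi-isomorphism (the arity $1$ part being trivial, $\dual\dual\mathcal{P}(1)=\mathcal{P}(1)=K$).

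The main obstacle I anticipate is precisely the bookkeeping of the first two steps: reconciling the iterated determinant twists $\det(T)$, $\Det(T)$ with the grading shifts of $\Lambda$ so that the $(T,W)$--summands really are copies of $\mathcal{P}(T)$ in the correct degree, and fixing the signs in $\partial_{\mathrm{c}}$, $\partial_{\mathrm{g}}$ precisely enough to get $D^2=0$ and $\varepsilon$ a chain map. Once this is in place the homological input is only the acyclicity of an augmented simplex, which is immediate.
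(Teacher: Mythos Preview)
The paper does not prove this theorem at all: it is stated with attribution to Ginzburg and Kapranov \cite{ginzburgkapranov} and no argument is given. Your proposal is essentially a faithful sketch of the Ginzburg--Kapranov proof (trees-within-trees indexing by pairs $(T,W)$, filtration by $|T|$, reduction to the acyclicity of the cube $\bigotimes_{e\in\edges(T)}(k\to k)$), and the outline is correct; the only genuine labour, as you already flag, is the determinant/sign bookkeeping, which Ginzburg and Kapranov carry out in detail.
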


Finally we briefly recall the definition of a Koszul operad. Let $\mathcal{P}=\mathcal{P}(K,E,R)$ be a quadratic operad. As in \autoref{rem:dualdg} for every $n$ we have
\[
\dual\mathcal{P}(n)^0=
\bigoplus_{\substack{\text{binary}\\n\text{--trees }T}}E^{*}(T)\otimes\Det(T)=F(E)(n)^{\vee}=F(E^{\vee})(n)
\]
and so we have a morphism of dg operads $\gamma_{\mathcal{P}}\co\dual\mathcal{P}\rightarrow\mathcal{P}^!$ given in degree $0$ by taking the quotient of $\dual\mathcal{P}(n)^0$ by the relations in $R^{\perp}$. In fact this induces an isomorphism $H^0(\dual\mathcal{P}(n))\rightarrow\mathcal{P}^!(n)$. 

\begin{definition}
We call $\mathcal{P}$ \emph{Koszul} if $\gamma_{\mathcal{P}}$ is a quasi-isomorphism. In other words each $\dual\mathcal{P}(n)$ is exact everywhere but the right end.
\end{definition}

\begin{definition}
If $\mathcal{P}$ is Koszul then a homotopy $\mathcal{P}$--algebra\footnote{More generally, a homotopy $\mathcal{P}$--algebra is an algebra over a cofibrant replacement for $\mathcal{P}$. That $\mathcal{P}$ is Koszul means that $\dual(\mathcal{P}^!)$ is such a cofibrant replacement. For simplicity we take this to be the definition, so as to avoid the need to discuss in any detail the model category structure on $\dgop(K)$.} is an algebra over $\dual(\mathcal{P}^!)$.
\end{definition}

\section{The open KTFT modular operad}
We will now reinterpret KTFTs in the language of operads.

\begin{definition}
We define the $k$--linear extended modular operad $\oktft$ (open Klein topological field theory) as follows:
\begin{itemize}
\item For $n,g \geq 0$ and $2g+n \geq 2$ the vector space $\oktft((g,n))$ is generated by diffeomorphism classes of surfaces with $m$ handles, $u$ crosscaps and $h$ boundary components with $2m+h+u-1=g$ and with $n$ disjoint copies of the unit interval embedded in the boundary labelled by $\{1,\ldots,n\}$, with an action of $S_n$ permuting the labels.
\item Composition and contraction is given by gluing along intervals.
\end{itemize}
\end{definition}

\begin{remark}\label{rem:nounit}
Since the connected sum of $3$ crosscaps is diffeomorphic to the connected sum of $1$ handle and $1$ crosscap, the value of $2m+h+u-1$ is well defined regardless of how we choose to represent the topological type of the surface. We note that all classes of surfaces feature in $\oktft$ except the disc with no marked points and the disc with one marked point due to the condition $2g+n\geq 2$.
\end{remark}

\subsection{M\"obius trees and the operad $\mass$}
We wish to give a simple algebraic description of the modular operad $\oktft$. In order to do this we begin by defining planar trees and M\"obius trees.

\begin{definition}
A \emph{planar tree} is a labelled tree with a cyclic ordering of the half edges at each vertex. An isomorphism of planar trees is an isomorphism of labelled trees that preserves the cyclic ordering at each vertex.
\end{definition}

\begin{definition}
A \emph{M\"obius tree} is a planar tree $T$ with a colouring of the half edges by two colours. Here a `colouring by two colours' means a map $c\co \halfedges(T)\rightarrow\{0,1\}$. An isomorphism of M\"obius trees is an isomorphism of labelled trees such that at each vertex $v$ either
\begin{enumerate}
\item the map preserves the cyclic ordering at $v$ and the colouring of the half edges connected to $v$
\item the map reverses the cyclic ordering at $v$ and reverses the colouring of the half edges connected to $v$ (we refer to this as \emph{reflection at $v$}).
\end{enumerate}
\end{definition}

\begin{remark}
A planar tree can be drawn in the plane with the cyclic ordering at each vertex given by the clockwise ordering. When drawing labelled trees we shall place the output leg unlabelled at the bottom (so the induced direction on the tree is downwards). For example the following two trees are isomorphic as labelled trees, but not as planar trees:
\[
\xygraph{!{<0cm,0cm>;<0.8cm,0cm>:<0cm,0.8cm>::}
{\textrm{\small$1$}}="1"&&{\textrm{\small$2$}}="2" \\
&*{\bullet}="v"\\
&="0"
"v"-"1" "v"-"2" "v"-"0"
}\qquad\qquad
\xygraph{!{<0cm,0cm>;<0.8cm,0cm>:<0cm,0.8cm>::}
{\textrm{\small$2$}}="2"&&{\textrm{\small$1$}}="1" \\
&*{\bullet}="v"\\
&="0"
"v"-"1" "v"-"2" "v"-"0"
}
\]

When drawing M\"obius trees, we shall draw the half edges coloured by $0$ as straight lines and the half edges coloured by $1$ as dotted lines. For example:
\[
\xygraph{!{<0cm,0cm>;<0.8cm,0cm>:<0cm,0.8cm>::}
{\textrm{\small$1$}}="1"&&{\textrm{\small$2$}}="2"\\
&*{\bullet}="v1"\\
&&="v1v2"&&{\textrm{\small$3$}}="3"\\
&&&*{\bullet}="v2"\\
&&&="0"
"1"-"v1" "2"-@{.}"v1" "v1"-"v1v2" "v1v2"-@{.}"v2" "v2"-"3" "v2"-@{.}"0"
}\qquad\cong\qquad
\xygraph{!{<0cm,0cm>;<0.8cm,0cm>:<0cm,0.8cm>::}
&&{\textrm{\small$1$}}="1"&&{\textrm{\small$2$}}="2"\\
&&&*{\bullet}="v1"\\
{\textrm{\small$3$}}="3"\\
&*{\bullet}="v2"\\
&="0"
"1"-"v1" "2"-@{.}"v1" "v1"-"v2" "v2"-@{.}"3" "v2"-"0"
}
\]
\end{remark}

If $T$ is a planar tree then we can define edge contraction by equipping the vertex in $T/e$ that the edge $e$ contracts to with the obvious cyclic ordering coming from the two cyclic orderings at the vertices either end of $e$, for example:

\[\xygraph{!{<0cm,0cm>;<0.8cm,0cm>:<0cm,0.8cm>::}
{\textrm{\small$1$}}="1"&&{\textrm{\small$2$}}="2"\\
&*{\bullet}="v1"\\
&&&&{\textrm{\small$3$}}="3"\\
&&&*{\bullet}="v2"\\
&&&="0"
"1"-"v1" "2"-"v1" "v1"-^e"v2" "v2"-"3" "v2"-"0"
}\qquad\longmapsto\qquad
\xygraph{!{<0cm,0cm>;<0.8cm,0cm>:<0cm,0.8cm>::}
\\{\textrm{\small$1$}}="1"&&{\textrm{\small$2$}}="2"&&{\textrm{\small$3$}}="3"\\
&&*{\bullet}="v"\\
&&="0"
"1"-"v" "2"-"v" "v"-"3" "v"-"0"
}
\]

If $T$ is a M\"obius tree and $e$ is an internal edge where both the half edges of $e$ are coloured the same then we define the tree $T/e$ as for planar trees, with the obvious colouring on $T/e$. If $f\co T'\rightarrow T$ is an isomorphism with $f(e)$ also an edge where both half edges are coloured the same we note that $T/e \cong T'/f(e)$. Therefore this is a well defined operation on isomorphism classes of M\"obius trees. Furthermore for any internal edge of $T$ we can find a tree in the same isomorphism class of $T$ such that this edge is made up of two similarly coloured half edges (by considering, if necessary, a tree with one of the vertices adjacent to the edge reflected). Therefore we have an edge contraction operation on isomorphism classes of M\"obius trees defined for any edge. For example:

\[\xygraph{!{<0cm,0cm>;<0.8cm,0cm>:<0cm,0.8cm>::}
{\textrm{\small$1$}}="1"&&{\textrm{\small$2$}}="2"\\
&*{\bullet}="v1"\\
&&="v1v2"&&{\textrm{\small$3$}}="3"\\
&&&*{\bullet}="v2"\\
&&&="0"
"1"-"v1" "2"-@{.}"v1" "v1"-"v1v2" "v1v2"-@{.}"v2" "v2"-"3" "v2"-@{.}"0"
}\qquad\longmapsto\qquad
\xygraph{!{<0cm,0cm>;<0.8cm,0cm>:<0cm,0.8cm>::}
\\{\textrm{\small$3$}}="1"&&{\textrm{\small$1$}}="2"&&{\textrm{\small$2$}}="3"\\
&&*{\bullet}="v"\\
&&="0"
"1"-@{.}"v" "2"-"v" "v"-@{.}"3" "v"-"0"
}
\]

Finally we observe that $(T/e)/e'\cong(T/e')/e$ so it does not matter in which order we contract edges.

Now recall that the associative operad $\ass$ can be defined as consisting of the vector spaces generated by planar corollas (isomorphism classes of planar trees with $1$ vertex) where composition is given by gluing such corollas and contracting internal edges. This leads us to the following definition:

\begin{definition}
The operad $\mass$ is defined as follows:
\begin{itemize}
\item $\mass(n)$ is the vector space generated by M\"obius corollas (isomorphism classes of M\"obius trees with $1$ vertex) with $n$ inputs, where $S_n$ acts by relabelling the inputs.
\item Composition maps are given by gluing corollas and contracting the internal edges. These maps satisfy associativity since, as mentioned previously, it does not matter in which order we contract internal edges.
\end{itemize}
\end{definition}

\begin{remark}
It is easy to see that as for $\ass$ the operad $\mass$ can be given the structure of a cyclic operad in the obvious way.
\end{remark}

It is important to note that with these definitions planar trees are $\ass$--decorated trees and M\"obius trees are $\mass$--decorated trees (where we are considering decorations by the underlying $S$--modules), since at each vertex $v$ decorated by a M\"obius corolla (an element of $\mass$) one obtains a cyclic ordering at $v$ from the ordering of the inputs of the M\"obius corolla and a colouring of the half edges of $v$ from the colouring of the corolla. Therefore given a labelled tree $T$, the space of $\mass$--decorations on $T$ is generated by the set of M\"obius trees up to isomorphism whose underlying labelled tree is $T$.

\begin{remark}\label{rem:assinmass}
$\ass$ is a suboperad of $\mass$. In fact $\mass$ is obtained from the operad generated by adjoining an involutive operation to $\ass$ by taking the quotient by the ideal generated by the reflection relation for the binary operation:
\[\xygraph{!{<0cm,0cm>;<0.8cm,0cm>:<0cm,0.8cm>::}
{\textrm{\small$1$}}="1"&&{\textrm{\small$2$}}="2" \\
&*{\bullet}="v"\\
&="0"\\
&*{\circ}="a"\\
&="00"
"v"-"1" "v"-"2" "v"-"0" "0"-"a" "a"-"00"
}\qquad=\qquad
\xygraph{!{<0cm,0cm>;<0.8cm,0cm>:<0cm,0.8cm>::}
{\textrm{\small$2$}}="2"&&{\textrm{\small$1$}}="1" \\
*{\circ}="a1"&&*{\circ}="a2"\\
="22"&&="11"\\
&*{\bullet}="v"\\
&="0"
"v"-"11" "v"-"22" "v"-"0" "11"-"a2" "22"-"a1" "a1"-"2" "a2"-"1"
}\]
Here
$\xygraph{!{;<0.5cm,0cm>:::}
="l"&*{\circ}="a"&="r"
"l"-"a" "a"-"r"}$
denotes the involution.
\end{remark}

\begin{proposition}
$\mass$ is the operad governing (non-unital) associative algebras with an involutive anti-automorphism.
\end{proposition}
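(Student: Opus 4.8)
The plan is to prove the slightly more precise statement that the category of $\mass$--algebras is equivalent to the category whose objects are (non--unital) associative $k$--algebras $A$ equipped with a $k$--linear map $\ast\co A\to A$ satisfying $(x^\ast)^\ast=x$ and $(xy)^\ast=y^\ast x^\ast$, and whose morphisms are algebra homomorphisms commuting with $\ast$. Given a $\mass$--algebra, i.e.\ an operad morphism $\phi\co\mass\to\mathcal{E}[V]$, I would first restrict along the suboperad inclusion $\ass\hookrightarrow\mass$ of Remark \ref{rem:assinmass}; this makes $A=V$ into a non--unital associative algebra with product $xy:=\phi(\mu)(x,y)$, where $\mu\in\ass(2)\subseteq\mass(2)$ is the binary corolla with all half edges coloured $0$ (non--unital because $\ass$ has no nullary part, cf.\ Remark \ref{rem:nounit}). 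Next I would pin down $\mass(1)$: a M\"obius corolla with one input has only its two half edges, the cyclic order on them is vacuous, and reflection simply flips both of their colours; hence $\mass(1)$ is two--dimensional, spanned by the class $e$ of the monochrome corolla — which is the operadic unit and lies in $\ass(1)$ — and the class $\iota$ of the corolla whose two half edges are coloured differently. I would then define $x^\ast:=\phi(\iota)(x)$.

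The structure now comes down to two identities in $\mass$. First, to compute $\iota\circ_1\iota$, represent each factor by the corolla with input coloured $0$ and output coloured $1$; the internal edge created by gluing then has mismatched colours, so one reflects the inner vertex before contracting, and the contracted corolla has both of its surviving half edges coloured $1$, i.e.\ it is the class $e$. Thus $\phi(\iota)^2=\phi(e)=\operatorname{id}_V$ and $\ast$ is involutive. Secondly, the reflection relation recorded in Remark \ref{rem:assinmass} asserts that, after contracting the two displayed trees, one obtains the same element of $\mass(2)$: on one side the M\"obius corolla $\iota\circ_1\mu$, on the other the M\"obius corolla obtained by reflecting $\mu$ and inserting a copy of $\iota$ at each input. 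Applying $\phi$ turns the left side into $(x,y)\mapsto(xy)^\ast$ and the right side into $(x,y)\mapsto y^\ast x^\ast$, so $\ast$ is an anti--automorphism. Conversely, a pair $(A,\ast)$ as above determines an operad morphism from the operad obtained by freely adjoining an involutive unary operation $\iota$ to $\ass$ (sending $\mu$ to the product and $\iota$ to $\ast$) to $\mathcal{E}[A]$; since $(xy)^\ast=y^\ast x^\ast$, this morphism kills the reflection relation and so factors through the quotient, which by Remark \ref{rem:assinmass} is $\mass$. Hence $(A,\ast)$ underlies a $\mass$--algebra. These two constructions are mutually inverse and natural in $V$, and a map of $\mass$--algebras is exactly a linear map intertwining the images of $\mu$ and $\iota$, i.e.\ an algebra homomorphism commuting with $\ast$; this gives the asserted equivalence of categories, hence the proposition.

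The only step requiring genuine work, rather than formal unwinding, is the input from Remark \ref{rem:assinmass} that there are \emph{no further relations}, i.e.\ that the evident surjection from the presented operad onto $\mass$ is an isomorphism; if this were not already available I would establish it by a dimension count in each arity. A M\"obius corolla with $n$ inputs consists of an ordering of its inputs ($n!$ choices, this being the planar corolla) together with a $2$--colouring of its $n+1$ half edges ($2^{n+1}$ choices), taken modulo reflection; since reflection flips every colour it acts freely, so $\dim_k\mass(n)=n!\,2^{n}$ and each class has a unique representative whose output half edge is coloured $0$. Such a representative is the image of the iterated product — which has all half edges coloured $0$ — with a copy of $\iota$ inserted at exactly those inputs coloured $1$ and acted on by the appropriate permutation, which shows the presented operad surjects onto $\mass$; and in the presented operad the relations $\iota^2=\operatorname{id}$, associativity, and the reflection relation let one push every copy of $\iota$ so that it sits directly above an input leg (with any $\iota$ at the output propagated onto the inputs via the same relations), so that arity $n$ is spanned by at most $n!\,2^{n}$ elements. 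Comparing the two counts forces the surjection to be an isomorphism; tracking cyclic orders and colours through the contractions in this argument — and in verifying the two key identities above — is the one place where care is needed, while everything surrounding it is routine.
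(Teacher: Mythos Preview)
Your proof is correct and follows the same route as the paper: both arguments reduce the proposition to the presentation of $\mass$ recorded in Remark~\ref{rem:assinmass}. The paper's proof is a single sentence (``This follows immediately from Remark~\ref{rem:assinmass}''), whereas you unpack what it means for an operad to govern a class of algebras, verify the two key identities $\iota\circ_1\iota=e$ and the anti-automorphism relation directly in terms of M\"obius corollas, and supply the dimension count $\dim_k\mass(n)=n!\,2^n$ that justifies the remark itself; all of this is correct and more explicit than what the paper provides.
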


\begin{proof}
This follows immediately from \autoref{rem:assinmass}.
\end{proof}

\subsection{M\"obius graphs and $\oktft$ as the modular closure of  $\mass$}
We recall that in the case of oriented topological field theories the corresponding modular operads are the modular closures of their genus $0$ part which in turn are identified with the commutative and associative operads: $\ctft\cong\modcom$ and $\otft\cong\modass$ (this formulation in terms of modular operads can be seen in Chuang and Lazarev \cite[Theorem 2.7]{chuanglazarev}). In particular the genus $0$ cyclic part contains all the relations. These results are modular operad versions of \autoref{prop:ctft} and \autoref{prop:otft} identifying $2$--dimensional TFTs as Frobenius algebras. 

The same is true for $\oktft$, giving us the desired simple algebraic description of $\oktft$.

\begin{theorem}\label{thm:oktftmass}
$\oktft\cong\modmass$.
\end{theorem}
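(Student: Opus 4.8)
The plan is to exhibit a map of extended modular operads $\modmass\to\oktft$, using the universal property of the modular closure, and then show it is an isomorphism degree by degree by comparing explicit bases. Since $\mass$ is a cyclic operad (as noted in the remark following its definition), and since $\oktft$ is an extended modular operad whose genus $0$ cyclic part $\oktft((0,n))$ consists of discs with $n$ marked intervals on the boundary, the first step is to construct a morphism of cyclic operads $\mass\to\oktft((0,-))$. A M\"obius corolla with $n$ inputs should be sent to a disc with $n+1$ marked intervals on its boundary, where the cyclic ordering of the half edges records the cyclic ordering of the intervals around the boundary circle, and the two-colouring records, for each interval, whether the local orientation it carries agrees or disagrees with a fixed reference orientation of the disc (equivalently, it encodes the ``flip'' data of Figure~\ref{fig:2kcob}). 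One must check this is well-defined on isomorphism classes of M\"obius corollas: the reflection relation at the vertex corresponds exactly to the diffeomorphism of the disc reversing its orientation (which simultaneously reverses the cyclic order of intervals and flips all the colours), and the reflection relation for the binary operation (Remark~\ref{rem:assinmass}) corresponds to the symmetry of the ``pair of pants'' / disc with three marked intervals. Compatibility with cyclic structure and operadic composition is then the observation that gluing two discs along a marked interval yields a disc, with cyclic order and colouring inherited in the evident way — this is precisely the edge-contraction rule for M\"obius trees.

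By the adjunction defining the modular closure (the left adjoint to ``take genus $0$ part''), the cyclic operad map $\mass\to\oktft((0,-))$ extends uniquely to a morphism of extended modular operads $\Phi\co\modmass\to\oktft$. Next I would show $\Phi$ is surjective: every surface with $m$ handles, $u$ crosscaps, $h$ boundary components and $n$ marked intervals can be cut along arcs into discs, exactly as in the classical decomposition of $\otft$ into pairs of pants but now also allowing the ``crosscap'' move (Figure~\ref{fig:2kcob}); concretely, any such surface is obtained from a genus $0$ surface (a disjoint union of discs) by the modular contraction maps $\xi_{ij}$, possibly after applying colour-flips, so it lies in the image of $\Phi$. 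The harder direction is injectivity, i.e.\ that all relations among the generators of $\oktft$ already hold in $\modmass$. For this I would introduce M\"obius graphs — the $\mass$-decorated graphs, i.e.\ graphs whose vertices are M\"obius corollas — and observe that $\modmass((g,n))$ is spanned by isomorphism classes of connected M\"obius graphs of the appropriate genus and leg-set (since the modular closure of a cyclic operad generated by corollas is spanned by decorated graphs modulo the edge-contraction relations). Then I would construct an inverse: given a surface in $\oktft((g,n))$, choose a decomposition into discs to get a M\"obius graph, and check that any two such decompositions are related by the moves that already generate the isomorphisms and edge-contractions of M\"obius graphs — these moves are precisely the ``associativity'' move (two ways of cutting a $4$-holed sphere), the ``commutativity/reflection'' move, and the ``crosscap'' relations encoding $2\,\text{crosscaps} = \text{handle}+\text{crosscap}$ and $(aU)^*=aU$, which are exactly the relations built into M\"obius graph isomorphism and the reflection relations already imposed in passing from the free operad with involution to $\mass$.

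The main obstacle I expect is the injectivity/well-definedness of the inverse map: one must verify that the assignment ``surface $\mapsto$ M\"obius graph of a disc-decomposition'' is independent of the chosen decomposition, which amounts to a careful analysis of how two systems of cutting arcs on an unorientable surface are related. In the orientable case this is classical (it is essentially the statement $\otft\cong\modass$), and the new phenomena are entirely captured by (i) the colour-reversal that accompanies pushing a marked interval through a crosscap — handled by the reflection relation in the definition of isomorphism of M\"obius trees — and (ii) the identity relating crosscaps and handles, which must be shown to follow from the M\"obius graph relations already present. I would reduce this to a finite check on low-complexity surfaces (a disc with one crosscap, a surface with two crosscaps, a Klein bottle with a hole), mirroring the pictures given in the discussion of Proposition~\ref{prop:cktft}, since a general surface is built from these pieces by operations under which the relations are already known to be compatible. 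Granting these local checks, $\Phi$ is a bijection on each $\oktft((g,n))$, hence an isomorphism of extended modular operads, which also yields Proposition~\ref{prop:oktft} upon passing to algebras.
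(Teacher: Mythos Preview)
Your overall architecture matches the paper's: build the map $\modmass\to\oktft$ from the cyclic operad map on discs, check surjectivity via the classification of unoriented surfaces, and for injectivity verify that any two M\"obius graphs yielding diffeomorphic surfaces are already equal in $\modmass$. The relations you single out (reflection at a vertex, and the $3$-crosscaps $=$ handle $+$ crosscap identity) are exactly the ones the paper uses.

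The one substantive difference is in how injectivity is organised. You propose to work on the surface side, analysing how two arc-systems cutting the same unorientable surface into discs are related and reducing to finite local checks. The paper instead works entirely on the graph side: it first identifies $\modmass((g,n))$ with M\"obius graphs modulo the edge-contraction relation $\approx$, then shows every M\"obius graph is $\approx$-equivalent to a \emph{normal form} --- contract all non-loop edges, use a small explicit relation to slide twisted loops so they are adjacent with nothing inside, then apply the $3$-crosscaps relation until at most two twisted loops remain. What is left is a ribbon graph with one vertex attached to a tiny M\"obius piece, and one finishes by invoking the known isomorphism $\modass\cong\otft$ for the ribbon part. This normal-form argument is cleaner than arc-system combinatorics on unorientable surfaces: it never needs a classification of moves between cut systems, and it makes the reduction to the oriented case completely transparent. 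Your approach would work, but you would end up re-proving the normal-form statement in topological disguise; it is worth noting that the paper sidesteps that entirely.
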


Before proving \autoref{thm:oktftmass} we will identify the operad $\modmass$ in terms of graphs. Therefore we need to extend our definitions of M\"obius trees to graphs.

\begin{definition}
A \emph{ribbon graph} is a graph with all vertices having valence at least $2$ equipped with a cyclic ordering of the half edges at each vertex and a labelling of the legs. An isomorphism of ribbon graphs is an isomorphism of graphs that preserves the cyclic ordering at each vertex and the labelling of the legs.
\end{definition}

\begin{definition}
A \emph{M\"obius graph} is a ribbon graph with a colouring of the half edges by two colours. An isomorphism of M\"obius graphs is an isomorphism of graphs preserving the labelling of the legs such that at each vertex $v$ either
\begin{enumerate}
\item the map preserves the cyclic ordering at $v$ and the colouring of the half edges at $v$
\item the map reverses the cyclic ordering at $v$ and reverses the colouring of the half edges connected to $v$ (again we refer to this as reflection at $v$).
\end{enumerate}
\end{definition}

\begin{remark}
Obviously our notions of planar and M\"obius trees correspond to ribbon and M\"obius graphs with no loops. Once again we can draw these graphs in the plane (although possibly with some edges intersecting of course) with the cyclic ordering at each vertex given by the clockwise ordering:
\[
\xygraph{!{<0cm,0cm>;<1.2cm,0cm>:<0cm,1.2cm>::}
!{(0,0)}*{\bullet}="v1"
!{(2,0)}*{\bullet}="v2"
!{(3,0.5)}*+{\textrm{\small$2$}}="2"
!{(3,-0.5)}*+{\textrm{\small$1$}}="1"
!{(0,-1)}*+{\textrm{\small$3$}}="3"
!{(-1,0)}*{}="loop"
!{(1,0.4)}*{}="u"
!{(1,-0.4)}*{}="d"
"v1" -@`{c+(0,-0.5),p+(0,-0.5)}@{.} "loop"
"loop" -@`{c+(0,0.5),p+(0,0.5)} "v1"
"v1"-"3"
"v1" -@`{c+(0.25,0.3),p+(-0.25,0)} "u"
"u" -@`{c+(0.25,0),p+(-0.25,0.3)} "v2"
"v1" -@`{c+(0.25,-0.3),p+(-0.25,0)} "d"
"d" -@`{c+(0.25,0),p+(-0.25,-0.3)}@{.} "v2"
"v2"-@{.} "2"
"v2"-"1"
}
\]
\end{remark}

\begin{remark}\label{rem:decgraphs}
Ribbon and M\"obius graphs are $\underline{\ass}$--decorated graphs and $\underline{\mass}$--decorated graphs respectively.
\end{remark}

If $G$ is a ribbon graph and $e$ is an internal edge of $G$ which is not a loop we can define edge contraction by equipping $G/e$ with the obvious cyclic ordering coming from $G$ as for trees.

If $G$ is a M\"obius graph and $e$ is an internal edge of $G$ that is not a loop, where both the half edges of $e$ are the same colour then we define $G/e$ as we did for trees. Further we observe that as for M\"obius trees this is well defined on isomorphism classes and can be extended to all internal edges except loops regardless of colour.

Let $G$ be a M\"obius or ribbon graph. Given two internal edges $e$ and $e'$ of $G$ that are not loops we have $(G/e)/e'\cong (G/e')/e$ provided both sides are defined. However if $e$ and $e'$ are connected to the same vertices contracting one will make the other into a loop. As a result we do not obtain a well defined operation on graphs by repeatedly contracting edges until we have only one vertex, which we did for trees. See \autoref{fig:graphcontract} for an example.

\begin{figure}[ht!]
\centering
\[
\begin{xy}
(0,10)*\xybox{
(0,0)*\xybox{
\xygraph{!{<0cm,0cm>;<1cm,0cm>:<0cm,1cm>::}
!{(0,0)}*{\bullet}="v1"
!{(2,0)}*{\bullet}="v2"
!{(0.9,-0.35)}*{}="o"
"v1" -@`{(0.5,1),(1.5,1)} "v2" ^{e_1}
"v1" -@`{(0.5,-1),(1.5,-1)} "v2" _{e_2}
"v1" - "v2"
"v1" - "o"
}};
(-15,-25)*\xybox{
\xygraph{!{<0cm,0cm>;<1cm,0cm>:<0cm,1cm>::}
!{(0,0)}*{\bullet}="v1"
!{(0,-0.8)}*{}="o"
"v1" -@`{(-1.8,-1.8),(-1.8,+1.8)} "v1" ^{}
"v1" -@`{(+1.8,-1.8),(+1.8,+1.8)} "v1"
"v1" - "o"
}};
(35,-25)*\xybox{
\xygraph{!{<0cm,0cm>;<1cm,0cm>:<0cm,1cm>::}
!{(0,0)}*{\bullet}="v1"
!{(-0.8,0)}*{}="o"
"v1" -@`{(-1.8,-1.8),(-1.8,+1.8)} "v1"
"v1" -@`{(+1.8,-1.8),(+1.8,+1.8)} "v1"
"v1" - "o"
}};
(10,-25)*{\textrm{\small$\ncong$}};
(-5,0);(-15,-18)
**\crv{(-13,-7)};?(1)*\dir2{>};
(25,0);(35,-18)
**\crv{(33,-7)}?(1)*\dir2{>};
(-17,-7)*{\textrm{\small$G/e_1$}};
(37,-7)*{\textrm{\small$G/e_2$}}
}
\end{xy}
\]
\caption{Contracting all edges that are not loops is not well defined for ribbon graphs.}
\label{fig:graphcontract}
\end{figure}

Consequently we define a relation $\approx$ on (isomorphism classes of) M\"obius or ribbon graphs where $G\approx G'$ whenever one is obtained from the other by an edge contraction so that $G=G'/e$ or $G'=G/e$. The transitive closure of this is then an equivalence relation we will also denote by $\approx$. All elements of an equivalence class have the same genus and the same number of legs. There is also at least one graph with one vertex in each class. Observe that the space of corollas is obtained from the space of trees modulo this relation. As a result $\ass$ and $\mass$ could be defined as the operads of planar and M\"obius trees modulo $\approx$ with composition given by gluing trees.

We can now describe the operad $\modmass$. Recall that $\modass$ is the modular operad given by ribbon graphs up to the relation $\approx$ with composition and contraction given by the gluing legs of graphs. This is true since the modular closure of $\ass$ is generated by freely adding contractions and applying just those relations necessary to ensure that associativity and equivariance holds. More explicitly, we can first identify the space of planar corollas with contractions added in freely as ribbon graphs with $1$ vertex with loops directed and ordered. The equivariance condition means that we must forget the directions and order of the loops. Composition is given by gluing such objects and contracting internal edges that are not loops. The associativity condition requires that it does not matter in what order we contract internal edges. The relation $\approx$ (induced on ribbon graphs with $1$ vertex) is precisely the minimal relation required to ensure this is true. For example the bottom two graphs in \autoref{fig:graphcontract} are equivalent under $\approx$ but not isomorphic. It is clear that the same argument holds true for $\modmass$.

\begin{lemma}
The extended modular operad $\modmass$ can be described as follows:
\begin{itemize}
\item If $2g+n\geq 2$ then $\modmass((g,n))$ is the vector space generated by isomorphism classes of M\"obius graphs with $n$ legs and genus $g$ modulo the relation $\approx$.
\item Composition and contraction are given by gluing legs of graphs.
\qed
\end{itemize}
\end{lemma}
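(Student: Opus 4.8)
The plan is to run the argument sketched just above for $\modass$ in terms of ribbon graphs, carrying the two-colouring and the reflection isomorphisms along at each step; since $\mass$ stands to $\ass$ exactly as M\"obius graphs stand to ribbon graphs, the proof should be the ribbon-graph proof with ``ribbon'' replaced throughout by ``M\"obius''. First I would recall that for a cyclic operad $\mathcal{Q}$ the modular closure $\modc{\mathcal{Q}}$ is obtained by freely adjoining the contraction maps $\xi_{ij}$ to $\mathcal{Q}$ and imposing only those relations forced by the equivariance and associativity axioms of a modular operad. Since $\mass$ is generated as a cyclic operad by M\"obius corollas, it follows that $\modmass$ is generated by M\"obius corollas together with formally adjoined contractions.

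Next I would identify these free objects explicitly. Applying a sequence of maps $\xi_{ij}$ to a single M\"obius corolla produces a one-vertex M\"obius graph with some loops, decorated with the extra data of a direction on each loop (which of its two half edges was the index $i$ and which was $j$) and an ordering of the loops (the order in which the $\xi$'s were applied). The equivariance relations for the contraction maps --- that $\xi_{ij}$ is suitably symmetric in $i,j$ and that contractions at disjoint index pairs commute --- say precisely that we must forget both the directions and the ordering of the loops, after which the generators of $\modmass((g,n))$ are exactly the isomorphism classes of one-vertex M\"obius graphs with $n$ legs and genus $g$. Here I would note that the colouring contributes nothing new: contraction of a loop whose two half edges are similarly coloured is defined just as in the tree case, and by reflecting the incident vertex if necessary any loop can be taken to be similarly coloured, so the operation $G/e$ on M\"obius graphs (set up before Figure \ref{fig:graphcontract}) is available verbatim.

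Then composition in $\modmass$ is gluing two M\"obius corollas along a leg and contracting the internal edges that are not loops; iterating this, every M\"obius graph of genus $g$ with $n$ legs arises. The associativity axiom demands that the resulting element be independent of the order in which these internal edges are contracted, and --- exactly as in the ribbon case illustrated by Figure \ref{fig:graphcontract} --- the minimal relation achieving this is the transitive closure $\approx$ of single edge contraction. It then remains to check the bookkeeping: genus and number of legs are $\approx$-invariant (genus being computed from the underlying labelled graph by the usual $H_1$ formula), every $\approx$-class has a one-vertex representative, the operad structure maps of $\modmass$ are visibly the gluings of legs of graphs, and the stability range $2g+n\ge 2$ agrees with the extended modular $S$-module convention. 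Assembling these observations yields the stated description.

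The main obstacle, though a mild one, is to verify carefully that the reflection isomorphisms (clause (2) in the definition of an isomorphism of M\"obius graphs) are compatible with edge contraction and with the gluing operations, so that $\approx$ is genuinely well defined on isomorphism classes of M\"obius graphs and so that the interaction of the colouring with the freely adjoined contractions produces no relations beyond $\approx$. Once that compatibility is in place, the identification of $\modmass$ follows word for word from the corresponding identification of $\modass$.
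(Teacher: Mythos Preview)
Your proposal is correct and follows essentially the same approach as the paper: the paper's own argument is precisely the paragraph preceding the lemma, which runs the ribbon-graph description of $\modass$ (free contractions on corollas, equivariance forgetting the directions and order of loops, associativity forcing $\approx$) and then simply asserts that ``the same argument holds true for $\modmass$''. You have in fact supplied more detail than the paper does, in particular by flagging the compatibility of reflection isomorphisms with edge contraction and gluing, which the paper leaves implicit.
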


We next describe the main construction arising in the proof of \autoref{thm:oktftmass}. Let $G$ be a ribbon graph. The ribbon structure of $G$ allows one to replace each edge with a thin oriented strip and each vertex with an oriented disc using the cyclic ordering to glue the strips to discs in an orientation preserving manner. As such we obtain an oriented surface with boundary well defined up to diffeomorphism. Further we can identify the legs as labelled copies of the interval embedded in the boundary in an orientation preserving manner.

We can generalise this to a similar construction for M\"obius graphs. We replace each vertex $v$ with an oriented disc and we replace each edge $e$ with an oriented strip. We then use the cyclic ordering to glue the strips to discs. If the edge $e$ is connected to the vertex $v$ by a half edge coloured by $0$ we glue the strip corresponding to $e$ to the disc corresponding to $v$ such that their orientations are compatible. However if the half edge is coloured by $1$ we glue such that the orientations are not compatible. We identify the legs as labelled copies of the interval embedded compatibly with the disc's orientation if the leg is coloured by $0$ and incompatibly otherwise. We finally forget all the orientations on each part of our surface. This yields a surface that is not necessarily orientable. These constructions coincide for those M\"obius graphs that are just ribbon graphs (that is, graphs all of the same colour).

We should verify this construction is well defined up to diffeomorphism. However this is clear since applying the reflection relation at a vertex $v$ corresponds to constructing a surface identical everywhere except at the disc corresponding to $v$ which has been reflected (see \autoref{fig:discreflection}). Reflection of the disc is a smooth (orientation reversing) map so the construction yields a diffeomorphic surface.

\begin{figure}[ht!]
\centering
\[
\begin{xy}
(0,0)*\xybox{*\xybox{\ellipse(12,12):a(60),:a(120){-}},
*\xybox{\ellipse(12,12):a(150),:a(210){-}},
*\xybox{\ellipse(12,12):a(240),:a(300){-}},
*\xybox{\ellipse(12,12):a(330),:a(30){-}},
*\xybox{\ellipse(12,12):a(30),:a(60){.}},
*\xybox{\ellipse(12,12):a(120),:a(150){.}},
*\xybox{\ellipse(12,12):a(210),:a(240){.}},
*\xybox{\ellipse(12,12):a(300),:a(330){.}},
*{\bullet},(3,0)*{\textrm{\small$v$}},
(0,0);a(45):(0,0);(10,0)**@{.}+(5,0)*{\textrm{\small$1$}},
(0,0);(-10,0)**@{.}-(5,0)*{\textrm{\small$3$}},
(-1,12)*\dir2{>},(-1,-12)*\dir2{>},
(0,0);(0,1):(0,0);(10,0)**@{.}+(5,0)*{\textrm{\small$4$}},
(0,0);(-10,0)**@{-}-(5,0)*{\textrm{\small$2$}},
(-1,12)*\dir2{>},(1,-12)*\dir2{<}},
(0,20);(0,-20)**@{--},
(60,0)*\xybox{*\xybox{\ellipse(12,12):a(60),:a(120){-}},
*\xybox{\ellipse(12,12):a(150),:a(210){-}},
*\xybox{\ellipse(12,12):a(240),:a(300){-}},
*\xybox{\ellipse(12,12):a(330),:a(30){-}},
*\xybox{\ellipse(12,12):a(30),:a(60){.}},
*\xybox{\ellipse(12,12):a(120),:a(150){.}},
*\xybox{\ellipse(12,12):a(210),:a(240){.}},
*\xybox{\ellipse(12,12):a(300),:a(330){.}},
*{\bullet},(3,0)*{\textrm{\small$v$}},
(0,0);a(45):(0,0);(10,0)**@{-}+(5,0)*{\textrm{\small$4$}},
(0,0);(-10,0)**@{.}-(5,0)*{\textrm{\small$2$}},
(1,12)*\dir2{<},(-1,-12)*\dir2{>},
(0,0);(0,1):(0,0);(10,0)**@{-}+(5,0)*{\textrm{\small$1$}},
(0,0);(-10,0)**@{-}-(5,0)*{\textrm{\small$3$}},
(-1,12)*\dir2{>},(-1,-12)*\dir2{>}},
(20,0);(40,0)**@{-}?(0)*\dir2{|}?(1)*\dir2{>}?(0.5)-(0,3)*{\textrm{\small{reflect}}}+(0,6)*{\textrm{\small$\cong$}}
\end{xy}
\]
\caption{The reflection relation at a vertex $v$ corresponds to reflection of the disc associated to $v$.}
\label{fig:discreflection}
\end{figure}

Since contracting an edge corresponds to contracting a strip this construction is in fact well defined on equivalence classes of $\approx$. \autoref{fig:basicsurfaces} shows the basic graphs corresponding to a handle, a crosscap and a boundary component (annulus). From this we can see that if a M\"obius graph has genus $g$ and the corresponding surface consists of $m$ handles, $u$ crosscaps and $h$ boundary components then $2m+h+u-1 = g$. This means that by this construction we obtain maps of the underlying vector spaces $\modmass((g,n))\rightarrow\oktft((g,n))$.

\begin{figure}[ht!]
\centering
\subfigure[Handle (genus $1$ orientable surface with $1$ boundary component and $1$ embedded interval)]{
\xygraph{!{<0cm,0cm>;<1cm,0cm>:<0cm,1cm>::}
!{(0,0)}*{\bullet}="v1"
!{(-1,0)}*{}="o"
"v1" -@`{(+2.8,-0.2),(-0.2,-2.8)} "v1"
"v1" -@`{(+2.3,-1.8),(+1.8,+2.3)} "v1"
"v1" - "o"}}
\qquad\qquad
\subfigure[Crosscap (projective plane with $1$ boundary component and $1$ embedded interval)]{
\xygraph{!{<0cm,0cm>;<1cm,0cm>:<0cm,1cm>::}
!{(0,0)}*{\bullet}="v1"
!{(-1,0)}*{}="o"
!{(1.6,0)}*{}="loop"
"v1" -@`{c+(0.5,-0.9),p+(0,-0.9)}@{.} "loop"
"loop" -@`{c+(0,0.9),p+(0.5,0.9)} "v1"
"v1"-"o"}}
\qquad\qquad
\subfigure[Annulus (sphere with $2$ boundary components and $1$ embedded interval)]{
\xygraph{!{<0cm,0cm>;<1cm,0cm>:<0cm,1cm>::}
!{(0,0)}*{\bullet}="v1"
!{(-1,0)}*{}="o"
!{(1.6,0)}*{}="loop"
"v1" -@`{c+(0.5,-0.9),p+(0,-0.9)} "loop"
"loop" -@`{c+(0,0.9),p+(0.5,0.9)} "v1"
"v1"-"o"}}
\caption{M\"obius graphs corresponding to basic surfaces}
\label{fig:basicsurfaces}
\end{figure}

It is also clear that these constructions are compatible with operadic gluings so we obtain maps $\modass\rightarrow\otft$ and $\modmass\rightarrow\oktft$. As shown by Chuang and Lazarev \cite{chuanglazarev} the former is an isomorphism. We can now prove \autoref{thm:oktftmass} by showing the latter is too.

\begin{proof}[Proof of \autoref{thm:oktftmass}]
It is sufficient to show the map $\modmass\rightarrow\oktft$ described above is an isomorphism of the underlying $S$--modules. The surjectivity of this map follows from the classification of unoriented topological surfaces with boundary and \autoref{fig:basicsurfaces}, which shows how to build a surface of any topological type. To see that it is injective it is necessary to show that any two graphs with the same topological type are equivalent\footnote{This is analogous to proving the sufficiency of a set of relations on the generators of $\kcob$ if we were proving \autoref{prop:oktft} without mention of operads.} under the relation $\approx$. We first note two graphs that are equivalent as shown by the following diagram:
\begin{equation}\label{eqn:rel1}
\begin{split}
\xygraph{!{<0cm,0cm>;<1cm,0cm>:<0cm,1cm>::}
!{(0,0)}*{\bullet}="v1"
!{(-1,+0.5)}*+{\textrm{\small$1$}}="1"
!{(-1,-0.5)}*+{\textrm{\small$2$}}="2"
!{(1.3,0)}*{}="loop"
"v1" -@`{c+(0.5,-0.7),p+(0,-0.7)} "loop"
"loop" -@`{c+(0,0.7),p+(0.5,0.7)}@{.} "v1"
"v1"-"1" "v1"-"2"}
\quad\approx\quad
\xygraph{!{<0cm,0cm>;<1cm,0cm>:<0cm,1cm>::}
!{(0,0)}*{\bullet}="v1"
!{(1.5,0)}*{\bullet}="v2"
!{(-1,0)}*+{\textrm{\small$2$}}="2"
!{(2.5,0)}*+{\textrm{\small$1$}}="1"
!{(0.75,0.4)}*{}="u"
!{(0.75,-0.4)}*{}="d"
"v1" -@`{c+(0.25,0.3),p+(-0.25,0)} "u"
"u" -@`{c+(0.25,0),p+(-0.25,0.3)} "v2"
"v1" -@`{c+(0.25,-0.3),p+(-0.25,0)} "d"
"d" -@`{c+(0.25,0),p+(-0.25,-0.3)}@{.} "v2"
"v2"- "1"
"v1"-"2"}
\\
\cong\quad
\xygraph{!{<0cm,0cm>;<1cm,0cm>:<0cm,1cm>::}
!{(0,0)}*{\bullet}="v1"
!{(1.5,0)}*{\bullet}="v2"
!{(-1,0)}*+{\textrm{\small$2$}}="2"
!{(0.6,0)}*+{\textrm{\small$1$}}="1"
!{(0.75,0.4)}*{}="u"
!{(0.75,-0.4)}*{}="d"
"v1" -@`{c+(0.25,0.3),p+(-0.25,0)} "u"
"u" -@`{c+(0.25,0),p+(-0.25,0.3)}@{.} "v2"
"v1" -@`{c+(0.25,-0.3),p+(-0.25,0)} "d"
"d" -@`{c+(0.25,0),p+(-0.25,-0.3)} "v2"
"v2"-@{.} "1"
"v1"-"2"}
\quad\approx\quad
\xygraph{!{<0cm,0cm>;<1cm,0cm>:<0cm,1cm>::}
!{(0,0)}*{\bullet}="v1"
!{(1,0)}*+{\textrm{\small$1$}}="1"
!{(-1,0)}*+{\textrm{\small$2$}}="2"
!{(1.3,0)}*{}="loop"
"v1" -@`{c+(0.5,-0.7),p+(0,-0.7)}@{.} "loop"
"loop" -@`{c+(0,0.7),p+(0.5,0.7)} "v1"
"v1"-@{.}"1" "v1"-"2"}
\end{split}
\end{equation}

Using this we can prove the relation corresponding to the fact that the connected sum of $3$ crosscaps corresponds to the connected sum of a handle and a crosscap:

\begin{equation}\label{eqn:rel2}
\xygraph{!{<0cm,0cm>;<1cm,0cm>:<0cm,1cm>::}
!{(0,0)}*{\bullet}="v1"
!{(1.3,0)}*{}="loop"
!{(-1.3,0)}*{}="loop2"
!{(0,+1.3)}*{}="loop3"
!{(0,-1)}*{}="o"
"v1" -@`{c+(0.5,-0.5),p+(0,-0.5)} "loop"
"loop" -@`{c+(0,0.5),p+(0.5,0.5)}@{.} "v1"
"v1" -@`{c+(-0.5,-0.5),p+(0,-0.5)} "loop2"
"loop2" -@`{c+(0,0.5),p+(-0.5,0.5)}@{.} "v1"
"v1" -@`{c+(-0.5,0.5),p+(-0.5,0)} "loop3"
"loop3" -@`{c+(0.5,0),p+(0.5,0.5)}@{.} "v1"
"v1"-"o"
}
\quad\approx\quad
\xygraph{!{<0cm,0cm>;<1cm,0cm>:<0cm,1cm>::}
!{(0,0)}*{\bullet}="v1"
!{(0,-1)}*{}="o"
!{(-1.3,0)}*{}="loop2"
"v1" -@`{(+2.8,+0.2),(-0.2,+2.8)} "v1"
"v1" -@`{(+2.3,+1.8),(+1.8,-2.3)} "v1"
"v1" -@`{c+(-0.5,-0.5),p+(0,-0.5)} "loop2"
"loop2" -@`{c+(0,0.5),p+(-0.5,0.5)}@{.} "v1"
"v1" - "o"}
\end{equation}
This can be shown by drawing graphs and repeatedly applying relation \ref{eqn:rel1}. We leave this to the reader.

Given a graph we can contract all internal edges that are not loops. Then we can ensure that all loops which are composed of half edges of the same colour (which we will call untwisted loops) are all coloured by $0$ since a loop coloured by $1$ is equivalent to a loop coloured by $0$ (by expanding the loop into $2$ edges of different colours and contracting the edge of colour $1$). We then apply  relation \ref{eqn:rel1} repeatedly to ensure that all the twisted loops are adjacent and have no half edges or legs on their inside. Finally we use relation \ref{eqn:rel2} repeatedly until there are at most $2$ twisted loops remaining. Therefore any graph is equivalent to a `normal form' consisting of either a ribbon graph with $1$ vertex or the connected sum (by which we mean vertices connected by a single untwisted edge) of a ribbon graph with $1$ vertex and a M\"obius graph with $1$ vertex and at most $2$ twisted loops. If two graphs have the same topological type then in this normal form the M\"obius graph components must be isomorphic. But the ribbon graph components must therefore be of the same topological type and we know that they are equivalent under the relation $\approx$ since $\modass\rightarrow\otft$ is an isomorphism.
\end{proof}

\begin{corollary}
Algebras over the modular operad $\oktft$ are (non-unital) symmetric Frobenius algebras together with an involution preserving the inner product.
\end{corollary}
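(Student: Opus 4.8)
The plan is to deduce the corollary directly from Theorem~\ref{thm:oktftmass} together with the adjunction defining the modular closure. By Theorem~\ref{thm:oktftmass} an algebra over $\oktft$ is a vector space $V$ equipped with a non-degenerate symmetric bilinear form $B$ and a morphism of modular operads $\modmass\to\mathcal{E}[V]$. Since $\modmass=\modc{\mass}$ is the modular closure of the cyclic operad $\mass$, and the genus~$0$ part of the endomorphism modular operad $\mathcal{E}[V]$ is exactly the endomorphism cyclic operad of $(V,B)$, the adjunction between modular closure and genus-$0$ restriction identifies such morphisms with morphisms of cyclic operads $\mass\to\mathcal{E}[V]$. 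So the first step is just to record that an $\oktft$-algebra is the same data as an algebra over the cyclic operad $\mass$; all the content is then in unwinding the latter notion.

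Next I would identify the underlying structure. Forgetting $B$ and the cyclic structure, a morphism of operads $\mass\to\mathcal{E}[V]$ makes $V$ into a non-unital associative algebra with an involution $x\mapsto x^{*}$ satisfying $(x^{*})^{*}=x$ and $(xy)^{*}=y^{*}x^{*}$; this is the earlier proposition identifying $\mass$, via the presentation in Remark~\ref{rem:assinmass}. Imposing the cyclic structure then amounts, by Lemma~\ref{lem:cyclic}, to checking $S_{n}$-equivariance of the images of the two generating corollas (the multiplication and the involution), since everything else is built from these. For the multiplication corolla this is the classical computation forcing $(V,\cdot,B)$ to be a non-unital symmetric Frobenius algebra, i.e.\ $B(xy,z)=B(x,yz)$; alternatively one invokes the identification $\modass\cong\otft$ together with Proposition~\ref{prop:otft} applied to the suboperad $\ass\subset\mass$. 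For the involution corolla $\iota\in\mass((2))$ the key observation is that reflection at its single bivalent vertex reverses the (trivial) cyclic order and swaps the two colours, so $\iota$ is fixed by the transposition in $S_{2}$; hence its image in $\mathcal{E}[V]((2))=V\otimes V$ must be symmetric. Translating through the isomorphism $V\otimes V\cong\Hom(V,V)$ furnished by $B$, this says the bilinear form $(x,y)\mapsto B(x^{*},y)$ is symmetric, which combined with the symmetry of $B$ is equivalent to $B(x^{*},y^{*})=B(x,y)$ --- the involution preserves the inner product. No further conditions appear.

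Finally the converse is obtained by running this backwards: given a non-unital symmetric Frobenius algebra with an involutive anti-automorphism preserving the pairing, the multiplication and involution define a morphism of operads $\mass\to\mathcal{E}[V]$ via Remark~\ref{rem:assinmass}, the equivariance computations above show it is a morphism of cyclic operads, and the adjunction extends it uniquely to a morphism of modular operads $\oktft\cong\modmass\to\mathcal{E}[V]$; the two assignments are mutually inverse. I expect the only genuinely delicate point to be the analysis of the involution corolla: one must verify that the cyclic compatibility it imposes is precisely the invariance $B(x^{*},y^{*})=B(x,y)$ and nothing more, which is the $S_{2}$-fixed-point computation on $\mass((2))$ indicated above; the remainder is the standard cyclic-operad-to-Frobenius-algebra dictionary already used in \cite{chuanglazarev}.
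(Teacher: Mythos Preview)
Your proof is correct and follows essentially the same route as the paper: reduce to cyclic $\mass$--algebras via the modular-closure/genus-$0$ adjunction (the paper cites \cite[Proposition 2.4]{chuanglazarev} for this), then identify cyclic $\mass$--algebras as $\mass$--algebras with an invariant inner product. The paper leaves the second step as a one-line assertion, whereas you unpack it generator by generator; your explicit $S_2$--fixed-point computation for the involution corolla is precisely the verification that ``invariant'' forces $B(x^*,y^*)=B(x,y)$, which the paper does not spell out.
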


\begin{proof}
Since $\oktft$ is the modular closure of its genus $0$ cyclic operad then algebras over $\oktft$ are simply algebras over $\mass$ considered as a cyclic operad (this is immediate as in \cite[Proposition 2.4]{chuanglazarev}). Cyclic $\mass$--algebras are just $\mass$--algebras with an invariant inner product which are precisely Frobenius algebras with an involution.
\end{proof}

In the formulation of topological field theories as a symmetric monoidal functor from some cobordism category the only difference is that we have a unit and counit (see \autoref{rem:nounit}). Therefore we have now fulfilled our earlier promise and shown:

\begin{corollary}[\autoref{prop:oktft}]
Open Klein topological field theories of dimension $2$ are equivalent to symmetric Frobenius algebras together with an involutive anti-automorphism preserving the pairing.
\qed
\end{corollary}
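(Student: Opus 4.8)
The plan is to derive this directly from the preceding corollary, which identifies algebras over the modular operad $\oktft$ with non-unital symmetric Frobenius algebras carrying a pairing-preserving involutive anti-automorphism, together with Remark \ref{rem:nounit}. A symmetric monoidal functor $Z\co\kcobo\to\vect$ amounts to a vector space $A=Z(I)$ with a non-degenerate pairing, a morphism of modular operads $\oktft\to\mathcal{E}[A]$, and --- by Remark \ref{rem:nounit}, since the only surfaces missing from $\oktft$ are the disc with one marked interval and the disc with none --- two further structure maps. By the preceding corollary the modular-operad part is precisely a non-unital symmetric Frobenius algebra structure on $A$ with a pairing-preserving involutive anti-automorphism $x\mapsto x^*$, so the first step is to show that the two extra maps exactly promote ``non-unital'' to ``unital'' and leave the involution untouched.

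For the second step, the disc with a single marked interval --- the piece forbidden by the stability bound $2g+n\geq 2$ --- supplies a unit $1\in A$ and, dually, a counit $A\to k$, and the standard open-string cobordism relations (the same ones that make the oriented theory of Proposition \ref{prop:otft} unital) force $1$ to be a two-sided unit compatible with the pairing; thus $A$ becomes a genuine symmetric Frobenius algebra. The disc with no marked intervals contributes nothing beyond what the unit and counit already encode. These maps impose no condition on $x\mapsto x^*$, and $1^*=1$ is automatic, since a bijective anti-homomorphism carries the two-sided unit to the two-sided unit. Conversely, a symmetric Frobenius algebra with such an involution produces all the structure maps, and the relations required for a well-defined symmetric monoidal functor on $\kcobo$ are exactly ``unital associative algebra with a non-degenerate invariant pairing'' (the image of $\cobo$, by Proposition \ref{prop:otft}) together with ``$x\mapsto x^*$ an involutive anti-automorphism preserving the pairing'' (the content of $\oktft\cong\modmass$ for the twisted generator); hence the two constructions are mutually inverse.

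The step requiring the most care is the bookkeeping around units: one must check that enlarging $\oktft$ by the two low-valence discs introduces no relations beyond unitality --- equivalently, that the generators-and-relations presentation of $\kcobo$ obtained from that of $\cobo$ by adjoining the orientation-reversing strip reduces, after deleting those two discs, to the presentation of $\modmass$ supplied by Theorem \ref{thm:oktftmass}. Granting Proposition \ref{prop:otft} for the oriented part and the isomorphism $\oktft\cong\modmass$ for the rest, this is a routine verification, the only subtlety --- that the disc with no marked intervals yields no spurious scalar ambiguity --- being handled exactly as in Remark \ref{rem:nounit}.
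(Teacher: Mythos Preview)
Your proposal is correct and follows essentially the same approach as the paper: the paper's entire argument is the single sentence ``In the formulation of topological field theories as a symmetric monoidal functor from some cobordism category the only difference is that we have a unit and counit (see Remark \ref{rem:nounit}),'' followed by \qed. You have simply spelled out the details that the paper leaves implicit --- that the two missing discs supply precisely the unit and counit, that no further relations appear, and that $1^*=1$ is automatic --- which is a faithful expansion of the paper's one-line deduction.
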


\subsection{Cobar duality for $\mass$}
We will now consider the operad $\mass$ in more detail.

Recall that the free operad generated by the vector space $\ass(2)$ over $k$ is the operad of binary planar trees and that $\ass$ is the quotient of this by the associativity relation. It is therefore a quadratic operad since the associativity relation is a quadratic relation. Further $\ass^!\cong\ass$.

$\mass$ is also quadratic: let $K$ be the semisimple algebra $\mass(1)=\langle 1, a\rangle / (a^2=1)=k[\mathbb{Z}_2]$. By taking the quotient of the free operad generated by the $(K,K^{\otimes 2})$--bimodule $\mass(2)$ by the associativity relation we obtain $\mass$. In fact, as we shall see, all the usual duality properties of $\ass$ hold for $\mass$.

\begin{proposition}\label{prop:massqdual}
$(\mass)^!\cong\mass$.
\end{proposition}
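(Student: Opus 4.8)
The plan is to verify the definition of quadratic duality directly, following the template of the classical fact $\ass^!\cong\ass$ but carried out over the ground algebra $K=\mass(1)=k[\mathbb{Z}_2]$ rather than over $k$. Write $\mass=\mathcal{P}(K,E,R)$ with $E=\mass(2)$ the $(K,K^{\otimes 2})$--bimodule of M\"obius $2$--corollas and $R\subset F(E)(3)$ the sub-bimodule of associativity relations; since $K$ is commutative, $K^{\mathrm{op}}=K$, so $\mass^!=\mathcal{P}(K,E^{\vee},R^{\perp})$. First I would pin down $E$ explicitly: it is free of rank $2$ as a right $K^{\otimes 2}$--module, generated by the two planar multiplications (the two cyclic orders at a trivalent vertex), equivalently it has $k$--basis the eight operations $x_i^{\pm}x_j^{\pm}$, with the left $K$--action and the $S_2$--action recorded by the M\"obius reflection and relabelling rules of Section 3. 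The substance of the proof is then the construction of an isomorphism of $(K,K^{\otimes 2})$--bimodules-with-$S_2$--action $\phi\co E\xrightarrow{\sim}E^{\vee}$ together with the verification that the induced isomorphism $F(E)(3)\cong F(E^{\vee})(3)=F(E)(3)^{\vee}$ carries $R$ onto $R^{\perp}$.

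For the first point, one checks $E^{\vee}\cong E$ by a short representation-theoretic computation: $K$ is semisimple with two one-dimensional simples (the characters $\chi_{\pm}$ of $\mathbb{Z}_2$), so $E$ decomposes into $\chi_{\pm}$--isotypic pieces for each of its $K$--actions, and comparing these with the isotypic decomposition of $E^{*}=\Hom_K(E,K)$ twisted by $\mathrm{sgn}_2$ exhibits the isomorphism. This is the $K$--linear analogue of the statement that $\ass(2)$, the regular representation of $S_2$, satisfies $\ass(2)^{\vee}\cong\ass(2)$; the new input is precisely that the M\"obius reflection relation is what makes the $\mathbb{Z}_2$--twisting self-dual, in the same way that the anti-automorphism identity $(xy)^{*}=y^{*}x^{*}$ is symmetric between the multiplication and the involution. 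For the second point, I would use that the natural pairing on $F(E)(3)$ identifies the two binary tree shapes $((\ast\ast)\ast)$ and $(\ast(\ast\ast))$ dually (with the $\Det$--sign built in), so that the associator $a=(x_1x_2)x_3-x_1(x_2x_3)$ together with its $K^{\otimes 3}$-- and $S_3$--translates, which generate $R$, pair to zero among themselves; hence $R\subseteq R^{\perp}$, and a dimension count $\dim_k R^{\perp}=\dim_k F(E)(3)-\dim_k R$ forces equality. Transporting back along $\phi$ then identifies $\mathcal{P}(K,E^{\vee},R^{\perp})$ with $\mathcal{P}(K,E,R)=\mass$.

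The main obstacle I expect is bookkeeping rather than anything conceptual: keeping the left $K$--action, the right $K^{\otimes 2}$--action and the $S_2$--action on $E$ mutually consistent with the M\"obius conventions, and then tracking the sign twist in $(-)^{\vee}$ and the $\Det$--signs in the pairing on $F(E)(3)$ through the identification $\phi$. Once $E$ is correctly set up as a bimodule with symmetry, the verification that $R^{\perp}$ corresponds to $R$ is formally identical to the associative case, the extra unary data only relabelling within the $k[\mathbb{Z}_2]$--factors; so the proof reduces to the known computation for $\ass$ ``twisted by $\mathbb{Z}_2$'', with the M\"obius reflection relation supplying exactly the self-duality that makes the twist harmless.
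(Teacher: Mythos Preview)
Your overall strategy is exactly the paper's: write $\mass=\mathcal{P}(K,E,R)$, exhibit an isomorphism between the generating $K$--collections, and then use the dimension count $\dim R=\tfrac{1}{2}\dim F(E)(3)$ together with the check that the associator lies in $R^{\perp}$. The second half of your plan is fine and matches the paper verbatim.

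The gap is in the first step. You assert that $E\cong E^{\vee}$ as $(K,K^{\otimes 2})$--bimodules with $S_2$--action, implicitly over the identity on $K$. This is false, and your isotypic argument will detect it rather than prove it. Concretely, take the left $K$--basis $B=\{m,\ \sigma m(a\otimes 1),\ \sigma m(1\otimes a),\ m(a\otimes a)\}$; using the reflection relation one computes that $\sigma$ acts on $E$ by $b_1\leftrightarrow ab_4$, $b_2\mapsto ab_2$, $b_3\mapsto ab_3$, and the transposed action on $E^{*}$ is identical in these coordinates. After the sign twist, $\sigma$ on $E^{\vee}$ is the negative of this. Decomposing the $+1$--eigenspace of $\sigma$ over $K=ke_{+}\oplus ke_{-}$ then gives $E_{+}\cong (ke_{+})^{3}\oplus ke_{-}$ but $(E^{\vee})_{+}\cong ke_{+}\oplus (ke_{-})^{3}$, so no left $K$--linear $S_2$--equivariant isomorphism $E\to E^{\vee}$ exists.

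What the paper does, and what you need, is to allow a nontrivial isomorphism $\psi_1\co K\to K^{\mathrm{op}}=K$, $\psi_1(a)=-a$, in arity $1$, and then take $\psi_2\co E\to E^{\vee}$ to be $\psi_1$--semilinear rather than $K$--linear: $\psi_2(ae)=-a\,\psi_2(e)$. With this twist the isotypic pieces match and the pair $(\psi_1,\psi_2)$ is an isomorphism of $K$--collections; the rest of your argument then goes through unchanged. So the issue is not mere bookkeeping: the sign representation in $(-)^{\vee}$ genuinely forces the nontrivial automorphism of $K$, and your proposal should build that in from the start.
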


\begin{proof}
As in the case of $\ass$ we can simply give an explicit isomorphism. The only potential difficulty arises from the quadratic dual being twisted by the sign representation, however this turns out not to be an issue. Let $K=\mass(1)=\langle 1, a\rangle / (a^2=1)=k[\mathbb{Z}_2]$ and $E=\mass(2)$.

Let $\psi_1\co K\rightarrow K^{\mathrm{op}}=K$ be the isomorphism with $\psi_1(a)=-a$. We define a map $\psi_2\co E\rightarrow E^{\vee}$ of $k$--linear $S_2$--representations as follows. Let $\sigma\in S_2$ denote the transposition and denote by $m$ the corolla:
\[m = 
\xygraph{!{<0cm,0cm>;<0.8cm,0cm>:<0cm,0.8cm>::}
{\textrm{\small$1$}}="1"&&{\textrm{\small$2$}}="2" \\
&*{\bullet}="v"\\
&="0"\\
"v"-"1" "v"-"2" "v"-"0"}
\]

Let $B=\{m,\sigma m(a \otimes 1), \sigma m(1 \otimes a), m(a\otimes a)\}$. Observe that $B$ is a $K$--linear basis for the left $K$--module $E$. For each $e\in B$ we denote by $e^*\in\Hom_K(E,K)$ the element of the dual basis for $E^{\vee}$. By this we mean $e^*$ is defined on each $e'\in B$ by $e^*(e') = 1$ if $e'=e$ and $e^*(e')=0$ otherwise. For $e\in B$ set $\psi_2(e)=e^*$. Now observe that $B$ also freely generates $E$ as a $k$--linear $S_2$--module, so $\psi_2$ extends to an isomorphism $E\rightarrow E^{\vee}$ of $k$--linear $S_2$--modules. Explicitly this sends an element $f$ of the $K$--linear basis $\sigma B=\{\sigma m, m(a\otimes 1), m(1\otimes a), \sigma m(a\otimes a)\}$ to $-f^*$ (where $f^*$ denotes the element of the dual basis of $\sigma B$).

We claim the map $\Psi = (\psi_1,\psi_2,0,\ldots)$ gives an isomorphism of $K$--collections. By definition it is an isomorphism of $S$--modules. Some straightforward calculations verify that it is also a map of $K$--collections. For example, for $e\in B$ we have $\psi_2(ae)=\psi_2(\sigma e (a\otimes a))=\sigma(\psi_2(e(a\otimes a))) = -a\psi_2(e) = \psi_1(a)\psi_2(e)$.

Therefore $F(E)\cong F(E^{\vee})$ with $\Psi$ extending to an isomorphism of operads. Let $R\subset F(E)(3)$ be the $S_3$--stable sub-bimodule generated by the associativity relation for $m$. It remains to show that $R^{\perp}=\Psi(R)$. Since $\dim(R)=\dim(F(E)(3))/2$ it is sufficient to check the associativity relation for $m$ is in $R^{\perp}$. This is a simple check, which we omit.
\end{proof}

We now describe the cobar construction for $\mass$. To do this we will need to identify the space of decorations on a tree $T$ by the underlying $K$--collection of $\mass$ (recall that in general this is different from the space of decorations on a tree $T$ by the underlying $S$--module). We therefore define the notion of reduced M\"obius and planar trees.

\begin{definition}\label{def:reducedtree}
Given a planar or M\"obius tree with at least one vertex of valence at least $3$ we can associate (possibly several) reduced trees by repeatedly contracting an edge attached to a vertex of valence $2$ until the tree is reduced. We say two reduced M\"obius or planar trees are equivalent if they are obtained from the same tree in this manner. When we refer to a reduced M\"obius or planar tree we will mean an isomorphism class of reduced M\"obius or planar trees up to this equivalence.
\end{definition}

\begin{remark}\label{rem:reducedmobius}
This has no effect for planar trees but for M\"obius trees we have that the following reduced M\"obius trees are the same for example:
\[
\xygraph{!{<0cm,0cm>;<0.8cm,0cm>:<0cm,0.8cm>::}
{\textrm{\small$1$}}="1"&&{\textrm{\small$2$}}="2"\\
&*{\bullet}="v1"\\
&&="v1v2"&&{\textrm{\small$3$}}="3"\\
&&&*{\bullet}="v2"\\
&&&="0"
"1"-"v1" "2"-@{.}"v1" "v1"-"v1v2" "v1v2"-@{.}"v2" "v2"-"3" "v2"-@{.}"0"
}\qquad\simeq\qquad
\xygraph{!{<0cm,0cm>;<0.8cm,0cm>:<0cm,0.8cm>::}
{\textrm{\small$1$}}="1"&&{\textrm{\small$2$}}="2"\\
&*{\bullet}="v1"\\
&&="v1v2"&&{\textrm{\small$3$}}="3"\\
&&&*{\bullet}="v2"\\
&&&="0"
"1"-"v1" "2"-@{.}"v1" "v1"-@{.}"v1v2" "v1v2"-"v2" "v2"-"3" "v2"-@{.}"0"
}
\]
Also note that edge contraction is still well defined on reduced M\"obius trees.
\end{remark}

Thus defined, the space of decorations on a reduced tree $T$ by the $k$--collection $\ass$ is generated by the set of reduced planar trees whose underlying tree is $T$. The space of decorations on $T$ by the $K$--collection $\mass$ is spanned by the set of reduced M\"obius trees whose underlying tree is $T$.

\begin{definition}\label{def:orientedtree}
The space of oriented planar (or M\"obius) trees is generated by planar (or M\"obius) trees equipped with an ordering of the internal edges subject to the relations arising by requiring that swapping the order of two edges is the same as multiplying by $-1$ so that, for example, the space $\ass(T)\otimes\det(T)$ (see \autoref{subsec:koszulduality}) can then be identified with the space of reduced oriented planar trees whose underlying tree is $T$.
\end{definition}

Now recall the operad $\ass$ is Koszul, $\ass(n)\cong\ass(n)^*$ and $\dass$ is the operad of reduced oriented planar trees (where $S_n$ acts by the sign representation) which governs $A_\infty$--algebras.

By sending a corolla $m$ in $\mass(n)$ to the map $\psi(m)(m)=1$, $\psi(m)(am)=a$, $\psi(m)(m')=0$ for the other corollas $m'$ (similar to the map in the proof of \autoref{prop:massqdual} but without the different signs since $\mass(n)^*$ is not twisted by the sign representation) we obtain an $S_n$--equivariant map $\mass(n)\rightarrow\mass(n)^*$ that is also a map of $(K,K^{\otimes n})$--bimodules. Therefore the underlying spaces $C(\mass)(n)$ are spanned by reduced oriented M\"obius trees with $n$ inputs, graded appropriately by the number of internal edges. Composition corresponds to gluing oriented trees. The differential corresponds to expanding vertices of valence greater than 3, for example:
\[
\xygraph{!{<0cm,0cm>;<0.7cm,0cm>:<0cm,0.7cm>::}
{\textrm{\small$1$}}="1"&&{\textrm{\small$2$}}="2"&&{\textrm{\small$3$}}="3"\\
&&*{\bullet}="v"\\
&&="0"
"1"-@{.}"v" "2"-"v" "3"-"v" "v"-@{.}"0"
}
\longmapsto
\xygraph{!{<0cm,0cm>;<0.7cm,0cm>:<0cm,0.7cm>::}
{\textrm{\small$1$}}="1"&&{\textrm{\small$2$}}="2"\\
&*{\bullet}="v1"\\
&&="v1v2"&&{\textrm{\small$3$}}="3"\\
&&&*{\bullet}="v2"\\
&&&="0"
"1"-@{.}"v1" "2"-"v1" "v1"-"v1v2" "v1v2"-"v2" "v2"-"3" "v2"-@{.}"0"
}
+
\xygraph{!{<0cm,0cm>;<0.7cm,0cm>:<0cm,0.7cm>::}
&&{\textrm{\small$2$}}="2"&&{\textrm{\small$3$}}="3"\\
&&&*{\bullet}="v1"\\
{\textrm{\small$1$}}="1"\\
&*{\bullet}="v2"\\
&="0"
"2"-"v1" "3"-"v1" "v1"-"v2" "v2"-@{.}"1" "v2"-@{.}"0"
}
\]
When drawing oriented M\"obius trees like the above we give them the orientation on the edges by ordering the internal edges from left to right, from bottom to top.

\begin{remark}\label{rem:cassincmass}
Observe as in \autoref{rem:assinmass} that $C(\ass)$ is a suboperad of $C(\mass)$, since planar trees are M\"obius trees with straight edges. Indeed once again $C(\mass)$ is generated by adjoining an involution of degree $0$ to $C(\ass)$, this time modulo the reflection relation on all corollas.
\end{remark}

\begin{lemma}\label{lem:koszul}
As dg vector spaces $C(\mass)(n)=\bigoplus^{2^n}C(\ass)(n)$.
\end{lemma}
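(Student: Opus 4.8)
The plan is to exploit the description from Remark~\ref{rem:cassincmass}: $C(\mass)$ is obtained from its dg suboperad $C(\ass)$ by adjoining a single degree-$0$ involution $a$, subject to $a\circ a=1$ and the reflection relations on all corollas. I claim this forces every element of $C(\mass)(n)$ to have a unique expression $\sum_{S\subseteq\{1,\dots,n\}}\xi_S\, a^S$ with $\xi_S\in C(\ass)(n)$, where $a^S$ denotes composition of a copy of $a$ onto the $i$-th input for each $i\in S$. Granting this, the $2^n$ subspaces $C(\ass)(n)\,a^S$ give an internal direct sum $C(\mass)(n)=\bigoplus_{S}C(\ass)(n)\,a^S$; each summand is isomorphic to $C(\ass)(n)$ because $a$ is invertible ($a\circ a=1$), and since $a\in C(\mass)(1)=K$ has degree $0$ while the differential vanishes on $C(\mass)(1)$, we get $d(\xi_S a^S)=(d\xi_S)a^S$. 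Thus the decomposition is one of complexes, with $d$ acting summand-wise as the differential of $C(\ass)$, and so $C(\mass)(n)\cong\bigoplus^{2^n}C(\ass)(n)$ as dg vector spaces.

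The substance is the normal form $\sum_S\xi_S a^S$. Since $C(\mass)(n)$ is spanned by reduced oriented M\"obius trees, fix such a tree and its underlying reduced labelled tree $T$. Using $a\circ a=1$ we may assume at most one $a$ per slot; the resulting ``$a$-data'' is a colouring $\halfedges(T)\to\{0,1\}$, well defined up to toggling both half-edges of each internal edge, and the reflection relation at a vertex $v$ toggles this colouring on all of $\flags(v)$, so the reflections generate a $\mathbb{Z}_2^{\vertices(T)}$-action. The key point is that each orbit has exactly one representative in which every internal edge is untwisted and the output leg has colour $0$: imposing these conditions is an $\mathbb{F}_2$-linear system whose coefficient map $\mathbb{F}_2^{\vertices(T)}\to\mathbb{F}_2^{\edges(T)\sqcup\{\mathrm{out}\}}$ is an isomorphism (it is injective since a set of reflections acting trivially on all internal edges must, by connectedness of the tree, be empty or all of $\vertices(T)$, and the latter toggles the output leg; and both spaces have dimension $|\edges(T)|+1=|\vertices(T)|$). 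After this normalization the only remaining data is the colouring of the $n$ input legs, i.e.\ a subset $S\subseteq\{1,\dots,n\}$ carried by an otherwise ordinary reduced oriented planar tree --- exactly the summand $C(\ass)(n)\,a^S$.

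It remains to confirm uniqueness of the expression, equivalently that the $2^n$ subspaces $C(\ass)(n)a^S$ are independent, and for this I would run a dimension count. For a reduced labelled $n$-tree $T$ the $K$-module $\mass(\inputs(v))$ is free of rank $(n(v)-1)!\,2^{n(v)-2}$ over $K=k[\mathbb{Z}_2]$, so the $K$-tensor product over the $|\vertices(T)|$ vertices has $\dim_k\mass(T)=2^{\,1+\sum_v(n(v)-2)}\prod_v(n(v)-1)!=2^n\dim_k\ass(T)$, the exponent collapsing to $n$ via $\sum_v n(v)=2|\edges(T)|+n+1$ and $|\vertices(T)|=|\edges(T)|+1$. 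The $\det$-twists agree on the two sides, so summing over $T$ yields $\dim C(\mass)(n)^s=2^n\dim C(\ass)(n)^s$ in every degree $s$; since each $C(\ass)(n)a^S$ has dimension $\dim C(\ass)(n)$ (again as $a^S$ is invertible) and the $2^n$ of them span, they are independent. The only genuinely non-formal step is the normalization in the second paragraph --- that the reflection relations suffice to push every $a$ onto the input legs with no residual identifications among the $2^n$ placements --- which the $\mathbb{F}_2$ linear-algebra argument and the matching dimension count together establish; the rest is bookkeeping.
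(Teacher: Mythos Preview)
Your proof is correct and follows essentially the same approach as the paper: both normalize a reduced M\"obius tree so that the root and all internal edges are coloured $0$, leaving only the $2^n$ colourings of the input legs as free data. The paper does this by an explicit bottom-up inductive procedure (fix the root using a reflection, then clear half-edges level by level using edge swaps and reflections), whereas you phrase the same normalization as an $\mathbb{F}_2$-linear isomorphism $\mathbb{F}_2^{\vertices(T)}\to\mathbb{F}_2^{\edges(T)\sqcup\{\mathrm{out}\}}$ and add a dimension count as a consistency check; the extra bookkeeping you do is not needed for the paper's argument but does no harm.
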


\begin{proof}
Given a reduced M\"obius tree $T$ we will find a unique reduced tree $T'$ isomorphic to it with the root and all internal edges coloured by $0$ (in other words all straight lines). This is then a tree in $C(\ass)$ with coloured inputs of which there are $2^n$ possibilities. The differentials clearly coincide as $K$ is concentrated in degree $0$.

To find such a tree we apply a sequence of transformations to $T$ that result in an isomorphic tree at each stage. The basic transformations are either using the reflection relation at a vertex or swapping the colourings of an edge (as in, for example, \autoref{rem:reducedmobius}) in a reduced tree. The process is as follows: we first apply the reflection relation if necessary to ensure the root is coloured by $0$. We then apply the edge relations to all the inputs of the bottom vertex to ensure all the half edges connected to the bottom vertex are coloured by $0$. We then repeat this inductively at each of the vertices at the next level until we have transformed the whole tree. The resulting tree $T'$ is unique since there is no choice in this process.
\end{proof}

\begin{corollary}\label{cor:koszul}
$\mass$ is Koszul.
\end{corollary}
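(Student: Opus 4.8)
The plan is to read off Koszulness of $\mass$ directly from Lemma \ref{lem:koszul} together with the already-recalled fact that $\ass$ is Koszul, so that no new computation is needed. Recall that, by definition, $\mass$ is Koszul exactly when each complex $\dual\mass(n)$ is exact everywhere but its right-hand end; since $\dual\mass = C(\mass)\otimes\Lambda$ and the twist by $\Lambda$ only shifts the grading and changes signs, this is equivalent to each $C(\mass)(n)$ being exact everywhere but the right end. Thus the whole statement reduces to a cohomology computation for the complexes $C(\mass)(n)$.

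First I would invoke Lemma \ref{lem:koszul}, which provides an isomorphism $C(\mass)(n)\cong\bigoplus^{2^n}C(\ass)(n)$; the point to emphasise is that, as the statement (``as dg vector spaces'') and the proof of that lemma make explicit, this is an isomorphism of cochain complexes, the differentials matching because $K=k[\mathbb{Z}_2]$ is concentrated in degree $0$. Next I would recall from the subsection on cobar duality that $\ass$ is Koszul, so each $C(\ass)(n)$ is exact everywhere but the right end, i.e.\ its cohomology is concentrated in the top degree. Since cohomology commutes with finite direct sums, $C(\mass)(n)$ likewise has cohomology concentrated in the top degree, hence $\dual\mass(n)$ is exact everywhere but the right end for every $n$, and therefore $\mass$ is Koszul.

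I do not expect any real obstacle here: all the combinatorial work — decomposing the M\"obius cobar complex into copies of the planar one — is carried out in Lemma \ref{lem:koszul}, and Koszulness of $\ass$ is quoted. The only thing that needs a moment's care is making sure the decomposition is genuinely a decomposition of cochain complexes rather than merely of the underlying graded spaces, so that one may pass to cohomology summand by summand; this is precisely what Lemma \ref{lem:koszul} asserts. If one wished to argue on the level of the structure map, one could instead note that $\gamma_{\mass}\co\dual\mass\to\mass^!$ is compatible with the inclusions $\dual\ass\hookrightarrow\dual\mass$ and $\ass^!\hookrightarrow\mass^!$ (cf.\ Remark \ref{rem:cassincmass} and Proposition \ref{prop:massqdual}), so that checking $\gamma_{\mass}$ is a quasi-isomorphism amounts to checking it on the $2^n$ summands, each a relabelled copy of the quasi-isomorphism $\gamma_{\ass}$.
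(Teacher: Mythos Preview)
Your proof is correct and follows exactly the same approach as the paper: invoke Lemma~\ref{lem:koszul} to identify $C(\mass)(n)$ with a direct sum of $2^n$ copies of $C(\ass)(n)$ as complexes, then use the Koszulness of $\ass$ to conclude each complex is exact except at the right end. The paper's proof is simply the one-line version of your argument, and your additional remarks about the twist by $\Lambda$ and the alternative via $\gamma_{\mass}$ are correct but not needed.
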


\begin{proof}
$\mass$ is Koszul if and only if the complexes $C(\mass)(n)$ are exact everywhere but the right end. This is true by \autoref{lem:koszul} since $\ass$ is Koszul.
\end{proof}

We now consider homotopy $\mass$--algebras. That is to say, algebras over $\dmass$. From \autoref{rem:cassincmass} we have that $\dmass$ is generated by the operations $m_i\in\dass$ for $i\geq 2$, together with an involution of degree $0$, which by convention we will say corresponds to $-a\in K$. The differential on $\dmass$ is the same as that on $\dass$ for the operations $m_i$ so it yields the usual $A_\infty$ conditions. The reflection relation on the $m_i$ however introduces an extra sign since we have now twisted $C(\mass)$ by the sign representation. The sign of the permutation reversing $n$ labels is $(-1)^{n(n-1)/2}$. So we have shown the following:

\begin{proposition}
Algebras over $\dmass$ are $A_\infty$--algebras with an involution such that
\[
m_n(x_1,\ldots,x_n)^*=(-1)^{\epsilon}(-1)^{n(n+1)/2-1}m_n(x_n^*,\ldots,x_1^*)
\]
where $\epsilon=\sum_{i=1}^{n}\degree{x}_i\left(\sum_{j=i+1}^{n}\degree{x}_j\right)$ arises from permuting the $x_i$ with degrees $\degree{x}_i$.
\qed
\end{proposition}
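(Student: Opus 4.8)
The strategy is to extract the structure of a $\dmass$-algebra directly from the generators-and-relations presentation of $\dmass$ recorded in Remark~\ref{rem:cassincmass}, and then to rewrite the one extra relation (the reflection relation on corollas) as the displayed identity by tracking signs.

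First I would recall that $\dmass=C(\mass)\otimes\Lambda$ and that, by Remark~\ref{rem:cassincmass}, the operad $C(\mass)$ is obtained from the suboperad $C(\ass)$ by adjoining a single unary degree-$0$ generator $a$ with $a^2=1$, modulo the relations of $C(\ass)$ and the reflection relation on every corolla. Tensoring with $\Lambda$ does nothing in arity $1$ (as $\Lambda(1)=k$ in degree $0$ with trivial action), converts $C(\ass)$ into $\dass$ --- whose algebras are exactly $A_\infty$-algebras, with generating operations $m_n\in\dass(n)$, $n\geq2$ --- and twists the $S_n$-action on the arity-$n$ component by the sign representation. Therefore a $\dmass$-algebra, i.e.\ a morphism $\dmass\to\mathcal{E}[V]$, is the same datum as: an $A_\infty$-structure $\{m_n\}$ on $V$ (the restriction along $\dass\hookrightarrow\dmass$), together with a degree-$0$ linear map $*\co V\to V$ with $*\circ*=\mathrm{id}$ (the image of the generator, which by the convention fixed above we take to be that of $-a$), subject only to the $A_\infty$ relations and to the images in $\mathcal{E}[V]$ of the reflection relations. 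Conversely, any such collection of maps assembles, via these identifications, into a morphism $\dmass\to\mathcal{E}[V]$. The first two pieces of data are precisely an $A_\infty$-algebra with an involution, so the whole content of the proposition is the shape of the reflection relation when read inside $\mathcal{E}[V]$.

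Thus the remaining step is to unwind the reflection relation on the corolla $m_n$. Reflecting this corolla reverses the cyclic order of its $n+1$ flags and flips each of their colours; re-presenting the outcome with inputs labelled $1,\dots,n$ has the effect of acting by the order-reversing permutation $w\in S_n$, while a colour-$1$ flag is by definition one pre- or post-composed with the generator. Evaluating the corresponding element of $\dmass(n)$ on arguments $x_1,\dots,x_n$ via the operad morphism therefore produces $m_n(x_n^*,\dots,x_1^*)^*$ decorated by three signs: the Koszul sign $(-1)^{\epsilon}$ from reordering the graded arguments; the sign $(-1)^{n(n-1)/2}$ of $w$, which appears because $\otimes\Lambda$ has twisted the $S_n$-action by the sign representation; and the sign $(-1)^{n+1}$ from the $n+1$ flags of the reflected corolla, each carrying the generator once it is identified with $-*$ rather than $*$. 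Hence the reflection relation reads
\[
m_n(x_1,\dots,x_n)=(-1)^{\epsilon}(-1)^{n(n-1)/2}(-1)^{n+1}\,m_n(x_n^*,\dots,x_1^*)^{*},
\]
and applying $*$ to both sides, using $*\circ*=\mathrm{id}$ and the congruence $n(n-1)/2+n+1\equiv n(n+1)/2-1\pmod 2$, gives exactly the stated formula.

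I expect the sign bookkeeping in this last step to be the only real obstacle. The points that need care are: no further Koszul sign is produced, since the generator has degree $0$ and composing $*$ with $m_n$ on either side is sign-free; the reflection relation is imposed on $\mass$ (equivalently on $C(\mass)$) \emph{before} the twist by $\Lambda$, so the sign of $w$ enters exactly once; and transposing the equation by applying $*$ does not change $\epsilon$, because $*$ preserves degrees and the $x_i^{*}$ have the same degrees as the $x_i$. Everything else is a routine transcription of Remark~\ref{rem:cassincmass} through the definition of an algebra over an operad.
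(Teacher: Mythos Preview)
Your argument is correct and follows the same route as the paper's (which is the short paragraph immediately preceding the proposition rather than a separate proof environment): use Remark~\ref{rem:cassincmass} to identify $\dmass$ as $\dass$ with an adjoined degree-$0$ involution subject to the reflection relations, then read off the sign from the $\Lambda$-twist. Your write-up is more explicit than the paper's in one useful respect: the paper only names the sign $(-1)^{n(n-1)/2}$ coming from the twist by the sign representation and leaves the contribution $(-1)^{n+1}$ implicit in the convention that the involution corresponds to $-a$, whereas you track both and verify the parity $n(n-1)/2+n+1\equiv n(n+1)/2-1\pmod 2$ directly.
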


\begin{remark}
$\dmass$ can be given the structure of a cyclic operad in the obvious way (permuting the labellings of M\"obius trees).
\end{remark}

An important operad for us (which we shall see later controls open Klein topological conformal field theory) is the modular operad $\moddmass$ which we shall now describe explicitly by identifying it as the operad of reduced oriented M\"obius graphs with the expanding differential. This is the analogue of the fact that $\moddass$ is the operad of reduced oriented ribbon graphs with the expanding differential. We need to define these terms of course, which are analogues of \autoref{def:reducedtree} and \autoref{def:orientedtree}.

\begin{definition}
A \emph{reduced} M\"obius or ribbon graph\footnote{As for trees we use the word `reduced' as opposed to `stable' to emphasise that the vertices of these graphs are not equipped with a genus.} is a graph where each vertex has valence at least $3$. Given a graph with at least one vertex of valence at least $3$ we can associate (possibly several) reduced graphs to it by repeatedly contracting an edge attached to a vertex of valence $2$ until the graph is reduced. We say two reduced graphs are equivalent if they are obtained from the same graph in this manner. When we refer to a reduced graph we will mean an isomorphism class of reduced graphs up to this equivalence.
\end{definition}

\begin{remark}\label{rem:reducedmobgraph}
As for trees this equivalence on reduced graphs has no effect for ribbon graphs. However for stable M\"obius graphs we have an additional relation changing the colours on half edges belonging to the same edge as in \autoref{rem:reducedmobius}. If two half edges in an edge are coloured by $0$ then we can replace them by half edges coloured by $1$ and get an equivalent reduced graph. If they are different colours we can swap the colours and get an equivalent reduced graph. It is clear relations of this form are the only ones arising from this equivalence relation.
\end{remark}

Let $G$ be a stable graph of genus $g$ with $n$ legs and $e$ edges. Let $\det(G)=\Det(k^{\edges(G)})\otimes\Det(H_1(|G|))$ be concentrated in degree $e+3-3g-n$.

\begin{proposition}\label{prop:orientedmobgraph}
There are isomorphisms of chain complexes
\[
\moddmass((g,n))\cong\bigoplus_{G\in\iso\Gamma((g,n))}\underline{\mass}((G))\otimes\det(G)
\]
where here $\underline{\mass}((G))=\bigotimes_v\underline{\mass}((g(v),\flags(v)))$ is defined by taking the tensor product over $K$ using the $(K,K^{\otimes \flags(v)})$--bimodule structures. The action of $S_n$ on the right permutes labels of $G$ twisted by the sign. The differential is the natural differential expanding vertices of valence greater than $3$.
\end{proposition}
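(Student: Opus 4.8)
The plan is to read $\moddmass$ off its construction as the modular closure $\modc{\dmass}$ of the cyclic operad $\dmass$, using the explicit description of $\dmass$ established just above together with the mechanism of modular closure already exploited in the proof of Theorem~\ref{thm:oktftmass}. Recall that, as a cyclic operad, $\dmass$ has underlying spaces spanned by reduced oriented M\"obius trees, with $S_n$ acting on the leg labels twisted by the sign representation and with differential expanding a vertex of valence $\geq 3$ into two vertices joined by a new internal edge; recall also the fact (the untwisted case of what we want to prove, and available from the ribbon graph description of $\moddass$) that the modular closure of $\dass$ is the operad of reduced oriented ribbon graphs with the expanding differential. The modular closure of a cyclic operad is obtained by freely adjoining the contraction maps $\xi_{ij}$ and imposing only those relations forced by equivariance and associativity, so the whole proof is an exercise in checking that performing this operation on reduced oriented M\"obius trees produces exactly the right-hand side of the proposition, just as it produces reduced oriented ribbon graphs in the untwisted case.

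First I would make precise the passage from trees to graphs. Geometrically a contraction $\xi_{ij}$ glues the legs $i$ and $j$ of a M\"obius tree (or of an already partially glued graph) together into a loop; performing all such gluings freely produces M\"obius graphs whose loops carry an auxiliary direction and an ordering among the new edges, equivariance forces one to forget the direction of each loop and to symmetrise over it, and associativity guarantees that gluing in different orders yields the same decorated graph. Adding the M\"obius colouring and the reflection relation at each vertex changes nothing in the argument that does this for $\dass$, so $\moddmass((g,n))$ is spanned by isomorphism classes of reduced oriented M\"obius graphs of genus $g$ with $n$ legs. To match this with $\bigoplus_{G}\underline{\mass}((G))\otimes\det(G)$, note that a genus-$g$ graph $G$ with $n$ legs whose vertices are decorated by M\"obius corollas is the same as a M\"obius graph (Remark~\ref{rem:decgraphs}); that $\underline{\mass}((G))$, being the tensor product over $K=\mass(1)=k[\mathbb{Z}_2]$ of the vertex decorations, is exactly the span of such decorations modulo the colour-changing identifications of Remark~\ref{rem:reducedmobgraph}, i.e.\ modulo reduced equivalence; and that the sum over $\iso\Gamma((g,n))$ effectively runs only over graphs all of whose vertices have genus $0$, since $\underline{\mass}$ is concentrated in genus $0$.

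Next comes the orientation and grading bookkeeping, which is the modular analogue of the tree case already used in describing $\dmass$. On a tree the relevant determinant line is $\det(T)=\Det(k^{\edges(T)})$ together with the $\Lambda$-twist of Definition~\ref{def:dualdg} on the legs, and a reduced tree with $n$ legs and $e$ internal edges sits in degree $e+3-n$, which agrees with $e+3-3g-n$ when $g=0$. Each contraction $\xi_{ij}$ enlarges $\edges(G)$ by one loop edge and $H_1(|G|)$ by one new cycle while removing two legs from the $\Lambda$-twisted leg set, three changes which together preserve the degree $e+3-3g-n$, and it carries the edge-determinant and sign data along accordingly. Iterating and symmetrising as dictated by equivariance assembles the data on any $G$ into $\det(G)=\Det(k^{\edges(G)})\otimes\Det(H_1(|G|))$ placed in degree $e+3-3g-n$; I would confirm the degree by reducing along any spanning tree to the genus-$0$ case. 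The residual $S_n$-action inherits its sign twist from the surviving $\Lambda$-factor on the legs exactly as it does on $\dmass(n)$, and the differential, being operadic and hence inherited vertexwise from $C(\mass)$, is still the sum over expansions of vertices of valence $>3$.

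The step I expect to be the genuine obstacle is the interaction between graph automorphisms and the determinant line. A M\"obius-graph automorphism may reverse the cyclic ordering at a vertex --- hence flip the colours there by the reflection relation --- while simultaneously permuting and re-orienting the incident edges, and one must check that the sign it induces on $\Det(k^{\edges(G)})\otimes\Det(H_1(|G|))$, together with the sign twist on the legs, matches exactly the sign produced by the reflection relation on the $\mass$-decoration at that vertex, so that the automorphism coinvariants in $\bigoplus_{G}\underline{\mass}((G))\otimes\det(G)$ reduce to a free span on isomorphism classes of reduced oriented M\"obius graphs with no decoration space unexpectedly annihilated. This is the graph-level counterpart of the disc-reflection compatibility illustrated in Figure~\ref{fig:discreflection}, and I would dispose of it by the same purely local analysis at a single vertex, where it becomes a direct check on a M\"obius corolla.
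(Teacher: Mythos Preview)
Your approach is essentially the paper's: build the map out of the modular closure by tracking what each contraction $\xi_{ij}$ does to a reduced oriented M\"obius tree, and verify compatibility with the equivariance and associativity relations. The paper is slightly more explicit than you are on two points. First, it fixes a concrete orientation convention: gluing legs $i<j$ produces a new edge $e$ directed from $i$ to $j$, hence an oriented cycle $c\in H_1(|G|)$, and $\xi_{ij}(T,\omega)$ is sent to $G\otimes(\omega\wedge e)\otimes c$; the crucial sign check is then that swapping $i$ and $j$ (which costs a sign from the $\Lambda$-twist) reverses the direction of $c$ (which also costs a sign in $\Det H_1$), so the map is well defined. Second, the paper concludes isomorphism by surjectivity plus a dimension count against the free modular operad on $\mass$, rather than by analysing coinvariants.

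Your final paragraph, worrying about whether a M\"obius-graph automorphism involving vertex reflection might produce a sign mismatch and collapse some decoration space, is a concern the paper's route avoids entirely: once an explicit, well-defined, surjective map exists and the dimensions agree, no separate automorphism analysis is needed. Your concern is not wrong, but it is resolved more cheaply by the paper's strategy than by the local disc-reflection check you propose.
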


\begin{proof}
It is easy to convince oneself this is true since both sides are related to the free modular operad generated by $\mass$. We just need to explain how the $\det(G)$ term arises. First we observe that as in \autoref{rem:decgraphs} the space $\underline{\mass}((G))$ can be identified with the space generated by reduced M\"obius graphs. Given a reduced oriented M\"obius tree $T$ with $\omega\in\det(T)$ representing the orientation of $T$ and a contraction $\xi_{ij}$ with $i<j$ we can glue the $i$--th and $j$--th legs of $T$ to obtain a reduced M\"obius graph $G$ with newly formed edge $e$. We direct the edge $e$ such that the $i$--th leg is outgoing and the $j$--th leg incoming. This gives an oriented cycle $c$ in $H_1(|G|)$, by using the canonical direction on the tree $T$. Therefore we map $\xi_{ij}(T)$ to $G\otimes\omega\wedge e\otimes c$. We then extend this map inductively by mapping $\xi_{kl}(G)$ with $k<l$ to the graph $G'$ obtained by gluing the $k$--th and $l$--th legs of $G$, orienting the new edge as before, which gives a new oriented cycle $c'$ so we take the element $\omega'\wedge c'\in\Det(H_1(|G'|))$ given $\omega'\in\Det(H_1(|G|)$. We must check of course that this is a well defined map. In particular we must check it is well defined for the various associativity and equivariance relations. We omit the details, however the main point to observe is that the minus sign arising when we apply the transposition $(ij)\in S_n$ to a reduced oriented M\"obius tree and then contract the $i$--th and $j$--th legs is reflected in the fact that the direction of the edge formed by gluing legs $i$ and $j$ is then reversed so the orientation of the cycle $c$ is reversed and also when we carry out contractions in a different order, we swap the ordering of the cycles, but we also swap the ordering of the new edges. 

It is completely clear that the gradings and the differentials coincide. To see this map is an isomorphism note it is clearly surjective then compare dimensions by observing that both sides are closely related to the free modular operad generated by $\mass$.
\end{proof}

The compositions are of course simply gluing graphs and ordering the edges in the same way as we do for oriented trees (cf \autoref{subsec:koszulduality}). Contractions are also obvious and induce the orientation as detailed in the above proof.

We can talk about oriented graphs as we did for trees in \autoref{def:orientedtree}.

\begin{definition}
The space of oriented ribbon/M\"obius graphs is generated by ribbon/M\"obius graphs $G$ equipped with an ordering of the internal edges and an ordering of a basis of cycles in $H_1(|G|)$ subject to the relations arising by requiring that swapping the order of two edges or of two cycles is the same as multiplying by $-1$. In particular the space $\underline{\mass}((G))\otimes\det(G)$ from above can be identified as the space of reduced oriented M\"obius graphs whose underlying graph is $G$.
\end{definition}

\begin{remark}\label{rem:orientation}
When $k=\mathbb{Q}$ or $k=\mathbb{R}$ an orientation on a graph is equivalent to an ordering of its vertices and directing its edges, up to an even permutation. For example see \cite{conantvogtmann,getzlerkapranov}.
\end{remark}

\subsection{M\"obiusisation of operads and closed KTFTs}
We briefly outline the general construction for operads that follows from considering the above arguments and we also briefly consider closed KTFTs. As usual we let $k$ be a field and $K$ be the unital associative algebra over $k$ generated by an involution $a$ so $K=\langle 1, a \rangle / (a^2=1)=k[\mathbb{Z}_2]$.

\begin{definition}
Let $\mathcal{P}\in\dgop(k)$ be an admissible dg operad so $\mathcal{P}(1)=k$ is concentrated in degree $0$. The \emph{M\"obiusisation} of $\mathcal{P}$ is an operad $\mob\mathcal{P}\in\dgop(K)$ obtained by freely adjoining an element $a$ to $\mathcal{P}(1)$ in degree $0$ and imposing the relations $da=0$, $a^2=1$ and $am = \tau_n(m)a^{\otimes n}$ (the reflection relation) for all $m\in\mathcal{P}(n)$ where $\tau_n=(1\quad n)(2\quad n-1)(3\quad n-2)\ldots\in S_n$ is the permutation reversing $n$ labels. This construction extends to a functor $\mob\co \dgop(k)\rightarrow\dgop(K)$.

Given a unital extended modular operad $\mathcal{O}$ with $\mathcal{O}((0,2))=k$ we define $\mob\mathcal{O}$ in a similar way.
\end{definition}

Note that $\mathcal{P}$ is a suboperad of $\mob\mathcal{P}$. Clearly $\mass$ as defined above is indeed the M\"obiusisation of $\ass$. We have the following properties that generalise those shown for $\mass$ in the previous section:

\begin{theorem}
Let $\mathcal{P}\in\dgop(k)$.
\begin{enumerate}
\item If $\mathcal{P}$ is quadratic then so is $\mob\mathcal{P}$ and $(\mob\mathcal{P})^!\cong\mob(\mathcal{P}^!)$
\item As dg vector spaces $C(\mob\mathcal{P})(n)=\bigoplus^{2^n}C(\mathcal{P})(n)$
\item $C(\mob\mathcal{P})=\mob C(\mathcal{P})$
\item If $\mathcal{P}$ is Koszul then $\mob\mathcal{P}$ is Koszul
\item If $\mathcal{P}$ is cyclic then $\modc{\mob\mathcal{P}}=\mob\modc{\mathcal{P}}$
\end{enumerate}
\end{theorem}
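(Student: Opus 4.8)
The plan is to prove the five items by transporting the arguments already given for $\mass=\mob\ass$ to a general operad $\mathcal{P}$, in the order (1), (2)--(3), (4), (5); only item (5) needs genuinely new combinatorial work.

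\textbf{Item (1).} Write $\mathcal{P}=\mathcal{P}(k,E_0,R_0)$. First I would record that $\mob\mathcal{P}$ is again quadratic, with ground algebra $K$, generating $(K,K^{\otimes 2})$--bimodule $E:=\mob\mathcal{P}(2)$, and quadratic relations $R\subseteq F(E)(3)$. The only point here is that the reflection relation for the binary generators is \emph{not} one of the relations in $R$ but is absorbed into the bimodule structure of $E$ (exactly as for $\mass$ in Remark \ref{rem:assinmass} and the proof of Proposition \ref{prop:massqdual}), making $E$ free as a left $K$--module on $E_0$ with the right $K^{\otimes 2}$-- and $S_2$--actions twisted by $a\cdot m=\tau_2(m)(a\otimes a)$. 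Since the free operad construction commutes with M\"obiusisation (clear from the universal properties), $F(E)\cong\mob F(E_0)$, and under the resulting identification $F(E)(3)\cong K\otimes_k F(E_0)(3)$ the sub-bimodule $R$ generated by $R_0$ corresponds to $K\otimes_k R_0$. Dualising, $E^{\vee}=\Hom_K(E,K)\otimes\mathrm{sgn}_2\cong K\otimes_k E_0^{\vee}$ and, because orthogonal complement is $K$--linear in one factor, $R^{\perp}\cong K\otimes_k R_0^{\perp}$; these are precisely the generating bimodule and relations of $\mob(\mathcal{P}^!)$, so $(\mob\mathcal{P})^!\cong\mob(\mathcal{P}^!)$. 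The one calculation that must actually be redone (rather than quoted) is that the twisted $(K^{\mathrm{op}},(K^{\mathrm{op}})^{\otimes 2})$--bimodule structure on $E^{\vee}$ matches that on $\mob(\mathcal{P}^!)(2)$: this is where the isomorphism $\psi_1(a)=-a$ of Proposition \ref{prop:massqdual} is used to absorb the $\mathrm{sgn}$--twist, and it is the same routine sign-chase.

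\textbf{Items (2) and (3).} Item (3) is the general form of Remark \ref{rem:cassincmass}. The cobar complex $C(\mathcal{P})$ embeds into $C(\mob\mathcal{P})$ as the sub-dg-operad spanned by decorations in which the root and all internal edges are coloured $0$ (that is, the cobar construction of the suboperad $\mathcal{P}\subseteq\mob\mathcal{P}$), and I would check that $C(\mob\mathcal{P})$ is obtained from it by freely adjoining the degree-$0$ involution dual to $a\in\mob\mathcal{P}(1)=K$, subject to $a^2=1$ and the reflection relation on every corolla --- no new signs intervene because $K$ is concentrated in degree $0$, so the cobar differential is unaffected. Hence $C(\mob\mathcal{P})=\mob C(\mathcal{P})$. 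Item (2) then follows exactly as Lemma \ref{lem:koszul}: starting from a reduced $\mob\mathcal{P}$--decorated tree, one uses the reflection relation at each vertex, working outward from the root, to bring it into a unique isomorphic form in which only the $n$ legs carry a colour; this exhibits $C(\mob\mathcal{P})(n)$ as $\bigoplus^{2^n}C(\mathcal{P})(n)$, and since the cobar differential only expands vertices it preserves the leg-colouring, so the decomposition is one of complexes. Item (4) is then immediate, as in Corollary \ref{cor:koszul}: if $\mathcal{P}$ is Koszul each $C(\mathcal{P})(n)$ is exact away from the right-hand end, hence so is any finite direct sum of copies, hence so is $C(\mob\mathcal{P})(n)$.

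\textbf{Item (5).} The plan is to rerun the proof of Theorem \ref{thm:oktftmass} with $\mathcal{P}$ in place of $\ass$, using the generators-and-relations description of the modular closure (freely adjoin the contractions $\xi_{ij}$, impose only the associativity and equivariance relations). On one side, $\modc{\mob\mathcal{P}}$ is presented by $\mob\mathcal{P}$--decorated graphs modulo the edge-contraction equivalence $\approx$ and the reflection-at-a-vertex isomorphisms; on the other, $\mob\modc{\mathcal{P}}$ is the modular operad of $\mathcal{P}$--decorated graphs with the involution $a$ adjoined subject to the reflection relations. I would write down the evident morphism $\mob\modc{\mathcal{P}}\to\modc{\mob\mathcal{P}}$ (send $a$ to $a$, use $\mathcal{P}\hookrightarrow\mob\mathcal{P}$) and an inverse built from the universal property of the modular closure, and then check that each side's defining relations are generated by the other's. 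This last step is the main obstacle: one must verify that the reflection relation is compatible with contraction (contracting an edge of a reflected configuration equals reflecting the contracted configuration, which was implicit in the well-definedness of $\approx$ on M\"obius graphs) and that, after putting a decorated graph into the ``normal form'' of the proof of Theorem \ref{thm:oktftmass}, its M\"obius data is completely accounted for by the $\mathcal{P}$--decoration together with the colouring, so that no extra relations survive on either side. Unlike items (1)--(4), which are formal once the twisted-module bookkeeping is in place, this is where the real content of Theorem \ref{thm:oktftmass} --- the normal-form argument, and ultimately the surface classification input --- has to be reproved in the abstract setting.
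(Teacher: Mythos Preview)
Your treatment of items (1)--(4) matches the paper's sketch proof closely: both generalize Proposition~\ref{prop:massqdual}, Lemma~\ref{lem:koszul}, Remark~\ref{rem:cassincmass}, and Corollary~\ref{cor:koszul} respectively, and the bookkeeping you describe is essentially what the paper does. (A minor difference: the paper deduces (3) from (2) plus an inductive check of the reflection relation, whereas you deduce (2) from (3); either order works.)

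Item (5), however, you have substantially misjudged. The paper's entire proof is the one-line observation that both $\modc{\mob\mathcal{P}}$ and $\mob\modc{\mathcal{P}}$ are modular operads generated by their genus $0$ parts, and these genus $0$ parts coincide (both equal $\mob\mathcal{P}$). The modular closure is by construction freely generated by contractions from its genus $0$ cyclic operad, and M\"obiusisation only adjoins a generator and relations in $((0,2))$ and genus $0$, so it preserves this property. That is all that is needed. Your plan to rerun the proof of Theorem~\ref{thm:oktftmass} --- the normal-form argument and the surface-classification input --- is both unnecessary and misplaced: Theorem~\ref{thm:oktftmass} requires topological input precisely because $\oktft$ is \emph{defined} via surfaces, whereas here both sides of (5) are defined purely algebraically. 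There is no ``real content'' to reprove in the abstract setting; indeed the normal-form argument (which relies on the specific combinatorics of handles, crosscaps, and the relation $3$ crosscaps $\approx$ handle $+$ crosscap) does not even make sense for a general $\mathcal{P}$.
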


\begin{proof}[Sketch proof]
\Needspace*{4\baselineskip}\mbox{}
\begin{enumerate}
\item This is a general version of ideas in \autoref{prop:massqdual}. Let $\mathcal{P}=\mathcal{P}(k,E,R)$. Let $E'=K\otimes_k E\otimes _k K^{\otimes 2}$ and $\mob E=E'/I$ where $I$ is generated by the reflection relations $a\otimes m\otimes 1\otimes 1=1 \otimes \sigma(m)\otimes a\otimes a$. Then $\mob R$ is generated by $R\subset F(E)(3)\subset F(\mob E)(3)$ and $\mob\mathcal{P}=\mob\mathcal{P}(K,\mob E,\mob R)$. Given $\psi\in\Hom_k(E,k)$ we define $\psi'\in\Hom_K(E',K)$ by $\psi'(1\otimes m\otimes 1\otimes 1)=\psi(m)$, $\psi'(a\otimes m\otimes 1\otimes 1)=a\psi(m)$, $\psi'(1\otimes m\otimes a\otimes a)=a\psi(\sigma(m))$ and $\psi(1\otimes m\otimes a\otimes 1)=\psi(1\otimes m\otimes 1\otimes a)=0$ for $m\in E$ and $\sigma=(1\quad 2)$. This in turn gives a well defined element of $\Hom_K(\mob E, K)$. This map extends to an isomorphism of $K$--collections $\Psi\co M(E^{\vee})\rightarrow (\mob E)^{\vee}$ and the result follows since $\Psi(\mob(R^{\perp}))=(\mob R)^{\perp}$.
\item This is a general version of \autoref{lem:koszul}. The same simple inductive proof works in the general case. Let $T$ be a tree with $n$ inputs. It's enough to show that $\mob\mathcal{P}(T)^* \cong \mathcal{P}(T)^*\otimes_k K^{\otimes n}$. Write $T$ as $T=T''\circ_i T'$ for $T'$ a corolla with $n'$ inputs. Then by induction on the number of internal edges $\mob\mathcal{P}(T)^* \cong (\mathcal{P}(T'')^*\otimes_k K^{\otimes n-n'+1})\otimes_K (\mathcal{P}(T')^*\otimes_k K^{\otimes n'})\cong \mathcal{P}(T)^*\otimes_k K^{\otimes n}$.
\item This follows from the above result together with a similar inductive argument showing that the reflection relation does indeed hold for any $m\in C(\mathcal{P})(n)\subset C(\mob\mathcal{P})(n)$.
\item This follows from the above results (cf \autoref{cor:koszul}).
\item We observe that both operads are generated by their genus $0$ parts which coincide.
\qedhere
\end{enumerate}
\end{proof}

Finally we briefly consider the situation of closed KTFTs.

\begin{definition}
We define the $k$--linear extended modular operad $\cktft$ (partial closed Klein topological field theory) as follows:
\begin{itemize}
\item For $n,g\geq 0$ and $2g+n\geq 2$ the vector space $\cktft((g,n))$ is generated by diffeomorphism classes of surfaces with $m$ handles and $u$ crosscaps and $n$ boundary components with $m+u/2 = g$ with $n$ copies of the circle embedded into the boundary, labelled by $\{1,\ldots,n\}$ with an action of $S_n$ permuting the labels.
\item Composition and contraction is given by gluing boundary components.
\end{itemize}
\end{definition}

\begin{remark}
This is `partial' closed Klein topological field theory in the sense that $u$ must be divisible by $2$. Therefore this operad features just those surfaces obtained as the connected sum of tori and Klein bottles. Since the connected sum of $2$ Klein bottles is diffeomorphic to the sum of $1$ handle and $1$ Klein bottle we see that $m+u/2$ is well defined regardless of how we represent the topological type. 
\end{remark}

Note that a Klein bottle (with boundary) must have genus $1$ in the modular operad sense since it is obtained from self gluing a genus $0$ surface. Therefore in full closed KTFT a crosscap would necessarily have genus $\frac{1}{2}$.

\begin{theorem}
$\cktft\cong\modmcom$.
\end{theorem}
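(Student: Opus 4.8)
The plan is to mimic the proof of Theorem \ref{thm:oktftmass} ($\oktft\cong\modmass$), replacing M\"obius trees/graphs built from intervals by their analogues built from circles, and $\mass$ by $\mcom$. First I would set up the closed analogue of the graph calculus: since the closed theory glues along circles rather than intervals, the relevant graphs have no planar structure at the vertices, only a colouring recording orientability data — equivalently, the vertices should be decorated by $\mcom$ rather than $\mass$. Concretely, $\mcom=\mob\com$ is $\com$ with an adjoined involution $a$ subject to $a^2=1$ and the reflection relation, but since $\com$ has trivial $S_n$-action the reflection relation forces $am=ma^{\otimes n}$; so a ``$\mcom$-corolla'' is just a corolla each of whose half-edges (including the output) carries a colour in $\{0,1\}$, modulo simultaneously flipping all colours at a vertex. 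Gluing such corollas and contracting edges (where an edge is admissible to contract when its two half-edges share a colour, as for M\"obius graphs) gives a combinatorial model for $\modmcom$, exactly parallel to the Lemma describing $\modmass$.

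Next I would construct the comparison map $\modmcom((g,n))\to\cktft((g,n))$ geometrically, as in the passage following Theorem \ref{thm:oktftmass}: thicken each $\mcom$-decorated graph to a surface by replacing vertices with oriented discs and edges with oriented bands, gluing compatibly or incompatibly according to the colour of the attaching half-edge, attaching a disc-with-hole (boundary circle) to each leg rather than an embedded interval, and then forgetting orientations. One checks well-definedness on $\approx$-classes (edge contraction = band collapse) and on the colour-flip relation (disc reflection), and compatibility with operadic gluing, exactly as in the open case. This produces a map of modular operads $\modmcom\to\cktft$, and it suffices to show it is an isomorphism of underlying $S$-modules.

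Surjectivity follows from the classification of compact surfaces with boundary: every connected surface with $m$ handles, $u$ crosscaps and $n$ boundary circles arises, using basic graphs for a handle, a crosscap and a boundary component just as in Figure \ref{fig:basicsurfaces} but in the circle-gluing convention. For injectivity I would again reduce every graph to a normal form: contract all non-loop edges, normalise untwisted loops to colour $0$, use the closed analogue of relation \eqref{eqn:rel1} to collect all twisted loops adjacent with nothing inside them, and use the closed analogue of relation \eqref{eqn:rel2} (three crosscaps $=$ handle $+$ crosscap) to reduce to at most two twisted loops; then two graphs of the same topological type have the same M\"obius part and ribbon parts of equal topological type, and the ribbon parts are $\approx$-equivalent because $\modcom\to\cktft$ (equivalently $\ctft\cong\modcom$, Proposition \ref{prop:ctft} in modular form) is already known to be an isomorphism. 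The main obstacle is verifying that relations \eqref{eqn:rel1} and \eqref{eqn:rel2} still hold in the closed setting: in the open case they were proved by explicit band-and-disc manipulations of M\"obius trees with legs, and here one must redo these manipulations with circle-boundaries and without a planar structure at vertices — the $\mcom$ (rather than $\mass$) decoration should actually make this easier, since there is no cyclic order to keep track of, but it is the step requiring genuine checking rather than formal transport of the open-case argument.
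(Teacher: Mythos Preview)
Your overall strategy matches the paper's: the paper gives only an ``idea of proof'' saying that $\modmcom$ is described by M\"obius-style graphs with the cyclic orderings at vertices forgotten, that one thickens such graphs to surfaces, and that the rest goes as in the proof of Theorem~\ref{thm:oktftmass}. Your plan of building the graph model for $\modmcom$, constructing a thickening map, and proving bijectivity by classification plus a normal-form argument is exactly this.

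There is, however, one genuine slip in your thickening. In the closed setting the paper replaces vertices by \emph{spheres with holes} and edges by \emph{cylinders}, not by discs and bands with something extra done at the legs. Your description (``discs and bands \ldots attaching a disc-with-hole to each leg'') does not produce a surface whose only boundary circles come from the legs; the disc-and-band surface already has large boundary that you have not capped off. Once you use spheres and cylinders the picture changes in an important way: a twisted loop now contributes a Klein bottle (two crosscaps), not a single crosscap, and an untwisted loop contributes a handle. This is precisely why $\cktft$ is the \emph{partial} theory with $u$ even, and it affects your normal-form step: the relevant closed relation is ``two twisted loops $\approx$ one untwisted loop plus one twisted loop'' (i.e.\ two Klein bottles $=$ handle $+$ Klein bottle), so you reduce to at most \emph{one} twisted loop, not two. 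With that correction the injectivity argument goes through, appealing to $\ctft\cong\modcom$ for the orientable part just as you say; and since there is no cyclic order at vertices, the analogue of relation~\eqref{eqn:rel1} is indeed easier (essentially vacuous).
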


\begin{proof}[Idea of proof]
The operad $\modmcom$ can be described in terms of graphs by forgetting mention of cyclic orderings of half edges at the vertices in our definition of M\"obius graphs. By replacing vertices with spheres with holes and edges by cylinders we obtain surfaces corresponding to such graphs. Then the above proposition follows by a similar argument to the proof of \autoref{thm:oktftmass}.
\end{proof}

It should not be at all surprising that $\modmcom$ does not describe full closed KTFT since \autoref{prop:cktft} tells us closed KTFTs are not just commutative Frobenius algebras with involution but rather have additional structure and additional relations that do not arise from relations in genus $0$, unlike in the open case.

\section{Graph complexes and moduli spaces of Klein surfaces}
In this section we will obtain results that are analogues of results concerning the ribbon graph decomposition of moduli spaces of Riemann surfaces with boundary. In particular we will be following methods of Costello \cite{costello1,costello2} which relate the operad $\moddass$ to certain moduli spaces and show $\moddass$ governs open topological conformal field theory. For us the unoriented analogue of a Riemann surface is a Klein surface and M\"obius graphs serve the same role as ribbon graphs.

Klein surfaces are `unoriented Riemann surfaces' (or more correctly Riemann surfaces are oriented Klein surfaces) in the sense that they have a dianalytic structure instead of an analytic structure. Klein surfaces are equivalent to symmetric Riemann surfaces (Riemann surfaces with an antiholomorphic involution) without boundary. In fact it follows Klein surfaces are equivalent to projective real algebraic curves (see Alling and Greenleaf \cite{allinggreenleaf} or Natanzon \cite{natanzon}). Since we wish to use techniques from hyperbolic geometry we will be concerned with the analytic theory.

\subsection{Outline and informal discussion}
We will first provide an informal discussion outlining the content of this section since there are some (slightly tedious) technical issues arising from the need to consider nodal surfaces, which are more subtle for Klein surfaces than for Riemann surfaces and which can hide the more important general picture.

A Klein surface is the natural extension of a Riemann surface allowing unorientable surfaces. Klein surfaces have a dianalytic structure instead of an analytic structure. However, given a Klein surface we can construct a double cover (the complex double) for the surface which is a Riemann surface so that we can use much of the theory of Riemann surfaces to study Klein surfaces. Indeed it is actually the case that Klein surfaces are equivalent to symmetric Riemann surfaces (Riemann surfaces with an antiholomorphic involution) identifying the Klein surface with the quotient. This is the standard way of approaching Klein surfaces, where a Klein surface is then simply a pair $(X,\sigma)$ with $X$ a Riemann surface and $\sigma$ and antiholomorphic involution.

However we will want to consider surfaces with nodes. With our previous comment in mind the obvious way to approach this is to define a nodal Klein surface as a pair $(X,\sigma)$ where $X$ is now a nodal Riemann surface and $\sigma$ is an antiholomorphic involution. This is the standard approach used for example by Sepp\"al\"a \cite{seppala} to construct a compactification of the moduli space of Klein surfaces/symmetric Riemann surfaces. 

There is also another natural way to define a nodal Klein surface without passing to the complex double. A nodal Klein surface is then a surface with some nodal singularities and a dianalytic structure, including at the nodes.

Although one might expect these two notions to coincide they do not. The reason for this is that it is no longer possible to form a unique complex double of a nodal Klein surface in the latter sense. This can be understood by considering a `strangulated' M\"obius strip (see \autoref{fig:strangulatedmobius}). In the second definition the dianalytic structure about the node encodes the twist in the M\"obius strip. If we pass to the complex double of a M\"obius strip, which is a torus, then a node on a strangulated torus does not encode any form of twisting. Indeed if we take the quotient of such a torus by an antiholomorphic involution then there is not a well defined way of giving a dianalytic structure at the node in the quotient.

\begin{figure}[ht!]
\centering
\[
\begin{xy}
(-30,0)*\xybox{
(-12.5,0)*{\copair{}},
(0,6)*{\shortcylinder{}{}}="top",
c+U="a","top",c+D;"a"**@{.},
(0,-6)*{\flip{}{}},
(12.5,0)*{\pair{}}},
(30,0)*\xybox{
(-12.5,0)*{\copair{}},
(-5,6)*\xybox{\ellipse(5,3)_,=:a(180){-}},
(5,6)*\xybox{\ellipse(5,3)^,=:a(180){-}},
(0,6)*{\bullet},
(0,-6)*{\flip{}{}},
(12.5,0)*{\pair{}}},
(-10,0);(10,0)**\dir{--};?(1)*\dir2{>}
\end{xy}
\]
\caption{A strangulated M\"obius strip, obtained by contracting the dotted line to a node.}
\label{fig:strangulatedmobius}
\end{figure}

With this in mind it is natural to ask if there is another double cover that we can construct for this second type of nodal surface. The solution is given by the orienting double which is a Riemann surface but possibly with a boundary.

The important difference between the complex double and the orienting double is that the complex double takes the boundary of a Klein surface to the fixed points in the interior of a symmetric Riemann surface and, as mentioned above, if we take the quotient of a symmetric Riemann surface with an interior node fixed by the symmetry then there is not a well defined way of giving a dianalytic structure at the corresponding boundary node in the quotient. However the orienting double takes the boundary of a Klein surface to the boundary of a symmetric Riemann surface and so boundary nodes now correspond to boundary nodes. For example, the quotient of a strangulated torus (with an antiholomorphic involution such that the node in the quotient is a boundary node) could be given the dianalytic structure of either a strangulated M\"obius strip or a strangulated annulus. However the orienting doubles of these are respectively an annulus with two strangulated points (the covering map wraps such an annulus twice around the M\"obius strip) and a disjoint union of two strangulated annuli.

It is also natural to ask if there is another notion of a boundary node on a Klein surface that is equivalent to an interior node fixed by the symmetry on a symmetric Riemann surface. The answer to this is a na\"ive node, which is simply a singularity without a dianalytic structure at the node.

So we can still obtain equivalences for each of these types of nodal surface, although they are different. It is perhaps not entirely necessary to consider these equivalences of categories in too much detail in order to obtain our main results but we give a fairly detailed overview in order to make the subtlety arising from the different types of nodes clearer.

The conclusion of all this is that we obtain two different partial compactifications of the moduli space of Klein surfaces by allowing nodes on the boundary. It turns out the second type of nodal surface is the natural notion for defining Klein topological conformal field theory since it generalises the gluing of intervals discussed in the previous sections. We obtain a topological modular operad which we will denote $\kb$, the operad of Klein surfaces with boundary nodes. This notation reflects the notation $\nb$ used by Costello \cite{costello1,costello2} for the operad of Riemann surfaces with boundary and boundary nodes. The first type of nodal surface gives rise to an operad that is closer in spirit to the Deligne--Mumford operad since we are gluing symmetric Riemann surfaces without boundary at interior marked points. We obtain an operad which we will denote $\mb$, the operad of `admissible' symmetric nodal Riemann surfaces without boundary. This notation reflects the common notation for the space of symmetric Riemann surfaces and the fact that symmetric Riemann surfaces are equivalent to real algebraic curves. Note however that for us $\mb$ is \emph{not} the full space of nodal symmetric Riemann surfaces (stable real algebraic curves) and should not be confused with the full compactification obtained by taking all ways of forming nodes (it is an open subspace of this).

We wish to apply the methods of Costello \cite{costello1,costello2} to find a graph decomposition of both of these operads. The operad $\kb$, being the operad of Klein topological conformal field theory, is similar to $\nb$ and most of the results concerning the latter have a corresponding version for the former. In particular $\moddmass$ (over $\mathbb{Q}$) is a chain model for the homology of $\kb$ (which is homotopy equivalent to the moduli space of smooth Klein surfaces) just as $\moddass$ is for $\nb$. It is from this that we obtain a M\"obius graph decomposition of moduli spaces of Klein surfaces which is a direct analogue to the ribbon graph decomposition of moduli spaces of Riemann surfaces.

In the second case we find that $\moddmass/(a=1)$ (where $a\in\dmass(1)=\mathbb{Q}[\mathbb{Z}_2]$ is the involution) is a chain model for the homology of $\mb$. This gives a `dianalytic ribbon graph' decomposition of the partial compactification $\mb$. This partial compactification is quite different from the other. For example unlike $\moddmass$ the quotient has non-trivial homology in genus $0$.

Later we give a concrete explanation of the graph complexes obtained for each of these spaces and the corresponding isomorphisms on homology without using the language of operads.

We will finish this outline with a few words about the proof of these results. It is important to note that the proof of the results by Costello \cite{costello2} transfers easily to our situation and as such we reference \cite{costello2} heavily. This is not that surprising since we have already mentioned that we can form the orienting double, which is a Riemann surface without boundary, which are the objects considered in \cite{costello1,costello2}. In addition hyperbolic geometry features heavily in the proof and the same methods apply directly to Klein surfaces (which again can be seen by considering an appropriate double).

We begin by reviewing the necessary definitions and theory of Klein surfaces following Alling and Greenleaf \cite{allinggreenleaf} and Liu \cite{liu}, with some modifications.

\subsection{Klein surfaces and symmetric Riemann surfaces}
Let $D$ be a non-empty open subset of $\mathbb{C}$ and $f\co D\rightarrow\mathbb{C}$ be a smooth map. Recall $f$ is analytic on $D$ if $\frac{\partial f}{\partial \bar{z}}=0$ and anti-analytic if $\frac{\partial f}{\partial z}=0$. We say $f$ is \emph{dianalytic} if its restriction to each component of $D$ is either analytic or anti-analytic. If $A$ and $B$ are any non-empty subsets of $\mathbb{C}^+$ (the upper half plane) we say a function $g\co A\rightarrow B$ is analytic (or anti-analytic) on $A$ if it extends to an analytic (respectively anti-analytic) function $g'\co U\rightarrow\mathbb{C}$ where $U$ is an open neighbourhood of $A$. Once again we call $g$ dianalytic if its restriction to each component of $A$ is either analytic or anti-analytic.

For us a surface is a compact and connected (unless otherwise stated) topological manifold of dimension $2$. Our surfaces can have a boundary. Recall that a smooth structure on a surface is determined by a smooth atlas (an atlas $\mathcal{A}$ such that all the transition functions of $\mathcal{A}$ are smooth) and similarly an analytic structure is given by an atlas such that all transition functions are analytic. A Riemann surface\footnote{When we use the term Riemann surface we are allowing surfaces possibly with non-empty boundary.} is a surface with an analytic structure and morphisms of Riemann surfaces are non-constant analytic maps (maps that are analytic on coordinate charts) that restrict to maps on the boundary. In order to bring our definitions closer to \cite{liu} we refer to Riemann surfaces with non-empty boundary as bordered Riemann surfaces. A Riemann surface is canonically oriented by its analytic structure.

\begin{definition}
An atlas $\mathcal{A}$ on a surface $K$ is dianalytic if all the transition functions of $\mathcal{A}$ are dianalytic. A \emph{dianalytic structure} on $K$ is a maximal dianalytic atlas. A \emph{Klein surface} is a surface equipped with a dianalytic structure.
\end{definition}

An analytic structure can be extended to a dianalytic structure and so a Riemann surface can be viewed as a Klein surface. In doing so we no longer have a canonical orientation. Klein surfaces in general need not be orientable. It is shown in \cite{allinggreenleaf} that every compact surface can carry a dianalytic structure.

\begin{definition}
A morphism between Klein surfaces $K$ and $K'$ is a non-constant continuous map $f\co (K,\partial K)\rightarrow(K',\partial K')$ such that for all $x\in K$ there are charts $(U,\phi)$ and $(V,\psi)$ around $x$ and $f(x)$ respectively and an analytic function $F\co \phi(U)\rightarrow\mathbb{C}$ such that the following diagram commutes:
\[\xymatrix{
U\ar[rr]^f\ar[d]^\phi & & V\ar[d]^\psi \\
\phi(U)\ar[r]^F & \mathbb{C}\ar[r]^\Phi & \mathbb{C}^+
}\]
Here $\Phi(x+iy)=x+i|y|$ and is called the \emph{folding map}. We call $f$ a \emph{dianalytic morphism} if we can choose charts so that $\Phi\circ F$ in the above diagram is dianalytic.
\end{definition}

\begin{remark}
Note that when we consider Riemann surfaces as Klein surfaces morphisms of Riemann surfaces can be thought of as morphisms of Klein surfaces. Note also that morphisms of Klein surfaces are not always dianalytic since we are allowing maps which `fold' along the boundary of $K'$. This is useful since, for example, it means that the category of Klein surfaces is the correct domain for the complex double (see Alling and Greenleaf \cite{allinggreenleaf}) and other quotients and that the category of Klein surfaces is equivalent to the category of symmetric Riemann surfaces without boundary (and then Klein surfaces are real algebraic curves, again see \cite{allinggreenleaf}). If $K,K'$ have no boundary then morphisms between them are dianalytic.
\end{remark}

A morphism $f$ is dianalytic if and only if $f^{-1}(\partial K')=\partial K$. The composition of two dianalytic morphisms is dianalytic.

\begin{definition}
A \emph{symmetric Riemann surface}\footnote{Note again that this is a slightly different definition to that which is normally found elsewhere since we are allowing our Riemann surfaces to have a boundary unless otherwise stated.} $(X,\sigma)$ is a Riemann surface with an antiholomorphic involution $\sigma\co X\rightarrow X$ (which of course restricts to the boundary if our surface is bordered).  A morphism $f\co (X,\sigma)\rightarrow(X',\sigma')$ is a morphism of Riemann surfaces such that $f\circ\sigma=\sigma'\circ f$. By convention we allow symmetric Riemann surfaces to be disconnected provided the quotient surface $X/\sigma$ is connected.
\end{definition}

Given a symmetric Riemann surface $(X,\sigma)$ the quotient surface $X/\sigma=K$ has a unique dianalytic structure such that the quotient map $q$ is a morphism of Klein surfaces, see \cite{allinggreenleaf}. Again $q^{-1}(\partial K)=\partial X$ if and only if $q$ is a dianalytic morphism of Klein surfaces. In this case we call $(X,\sigma)$ a \emph{dianalytic symmetric Riemann surface}. 

\begin{definition}
\Needspace*{4\baselineskip}\mbox{}
\begin{itemize}
\item The category $\klein$ has objects Klein surfaces with morphisms as defined above.
\item The category $\dklein$ has objects Klein surfaces with just the dianalytic morphisms.
\item The category $\symriem$ has objects symmetric Riemann surfaces without boundary and morphisms analytic maps as defined above.
\item The category $\dsymriem$ has objects dianalytic symmetric Riemann surfaces (possibly with boundary) and morphisms analytic maps as defined above.
\end{itemize}
\end{definition}

To understand the category of Klein surfaces better we recall the existence of the orienting double of a Klein surface.

\begin{lemma}\label{lem:odouble}
\Needspace*{4\baselineskip}\mbox{}
\begin{itemize}
\item Let $K$ be a Klein surface. Then there exists a Riemann surface $K_\mathbf{O}$ and a morphism $f\co K_\mathbf{O}\rightarrow K$ such that $f^{-1}(\partial K)=\partial K_\mathbf{O}$ (so $f$ is dianalytic) and $K_\mathbf{O}$ is universal with respect to this property. This means if $X$ is a Riemann surface and $h\co X\rightarrow K$ is a morphism with $h^{-1}(\partial K)=\partial X$ then there is a unique analytic morphism $g\co X\rightarrow K_\mathbf{O}$ such that $h=f\circ g$ (more succinctly $K_\mathbf{O}$ is the universal Riemann surface over $K$ in the category $\dklein$). In particular this means $K_\mathbf{O}$ is unique up to unique isomorphism. We call it the \emph{orienting double} of $K$.
\item The map $f\co K_\mathbf{O}\rightarrow K$ is a double cover.
\item $K_\mathbf{O}$ has an antiholomorphic involution $\sigma$ such that $f\circ \sigma=f$.
\item Any double cover $h\co X\rightarrow K$ admitting such an involution and satisfying the property $h^{-1}(\partial K)=\partial X$ is universal with respect to this property (and hence is uniquely isomorphic to $K_\mathbf{O}$ as a double cover).
\item The map $f$ is unramified, $\sigma$ is unique and $K_\mathbf{O}$ is disconnected if and only if $K$ is orientable.
\end{itemize}
\end{lemma}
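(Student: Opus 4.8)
The plan is to build $K_\mathbf{O}$ by hand from a dianalytic atlas as the associated orientation double cover, check the structural claims directly on charts, and then deduce universality and all the uniqueness assertions by the standard covering-space machinery, using only that a morphism of Riemann surfaces is locally an analytic change of coordinates (in particular orientation-compatible and, by the conventions of this section, boundary-respecting rather than folding).

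First I would fix a dianalytic atlas $\{(U_i,\phi_i)\}$ on $K$, with $\phi_i$ mapping into $\mathbb{C}^+$ and $\phi_i(U_i\cap\partial K)\subset\mathbb{R}$, and introduce the conjugate charts $\bar\phi_i$. I take $K_\mathbf{O}$ to be the set of pairs $(x,\varepsilon)$ where $\varepsilon$ is a coherent choice, for all $i$ with $x\in U_i$, of $\phi_i$ or of $\bar\phi_i$; in other words $K_\mathbf{O}$ is the orientation double cover of the surface $K$, with $f\co K_\mathbf{O}\to K$ the projection. Over the sheet selecting $\phi_i$ one uses $\phi_i$ as chart, over the sheet selecting $\bar\phi_i$ one uses $\bar\phi_i$; the transition functions of this atlas on $K_\mathbf{O}$ are all analytic (an anti-analytic transition of $K$ becomes analytic once one passes to opposite sheets on the two sides), so $K_\mathbf{O}$ is a Riemann surface and $f$ is an unramified double cover. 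Because the sheet-charts still send boundary into $\mathbb{R}$ one has $f^{-1}(\partial K)=\partial K_\mathbf{O}$, so $f$ is dianalytic by the criterion recalled above; this is exactly where the orienting double differs from the complex double. Finally $\sigma\co(x,\varepsilon)\mapsto(x,\bar\varepsilon)$, which interchanges the two sheets and reads as $z\mapsto\bar z$ in every chart $\phi_i$, is a fixed-point-free antiholomorphic involution with $f\sigma=f$ and $\sigma^2=\operatorname{id}$. (Alternatively one may simply invoke the construction of the orienting double in Alling--Greenleaf or Liu and quote these properties.)

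The main step is universality. Let $X$ be a Riemann surface and $h\co X\to K$ a morphism with $h^{-1}(\partial K)=\partial X$, so $h$ is dianalytic and non-folding. Since $K_\mathbf{O}$ is the orientation cover and $X$ is orientable, the obstruction $\pi_1(X)\to\pi_1(K)\to\mathbb{Z}/2$ to lifting $h$ through $f$ vanishes, so a continuous lift $g\co X\to K_\mathbf{O}$ with $fg=h$ exists (component by component if $X$ is disconnected); on charts adapted so that $h$ is analytic with respect to some $\phi_i$, the lift $g$ agrees with the local analytic representative of $h$, so $g$ is a morphism of Riemann surfaces. For uniqueness, if $g'$ is another analytic lift then on each component $g'=g$ or $g'=\sigma g$ (the set where they agree is open and closed), and $g'=\sigma g$ is impossible because $\sigma g$ is antiholomorphic, hence not a morphism of Riemann surfaces; thus $g$ is unique. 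The usual formal argument then shows $K_\mathbf{O}$ is determined up to unique isomorphism by this property.

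It remains to treat the last two bullets. Given a double cover $h\co X\to K$ of a Riemann surface admitting an antiholomorphic involution $\tau$ with $h\tau=h$ and $h^{-1}(\partial K)=\partial X$, universality gives an analytic $g\co X\to K_\mathbf{O}$ with $fg=h$; comparing degrees in $\deg h=\deg f\cdot\deg g$ (all unramified, $\deg h=\deg f=2$) forces $\deg g=1$, so $g$ is an analytic bijection, hence an isomorphism, and since $g\tau g^{-1}$ and $\sigma$ are both nontrivial deck transformations of the degree-$2$ cover $f$ they coincide, i.e.\ $g\tau=\sigma g$, so $g$ is an isomorphism of double covers. Finally: $f$ is unramified by construction; any antiholomorphic involution commuting with $f$ over the identity of $K$ is a nontrivial deck transformation of $f$, of which there is exactly one, so $\sigma$ is unique; and $K$ is orientable iff its orientation bundle is trivial iff $f$ admits a continuous section iff $K_\mathbf{O}$ is disconnected, in which case $\sigma$, being fixed-point-free and fibrewise over $f$, swaps its two components, each of which is mapped homeomorphically onto $K$. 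The only delicate point throughout is the bookkeeping at the boundary---keeping the sheet-charts adapted to $\mathbb{C}^+$ and $\mathbb{R}$ so that $f$ and the lift $g$ respect boundaries---and this is precisely where the defining property $f^{-1}(\partial K)=\partial K_\mathbf{O}$ is used.
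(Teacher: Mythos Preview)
Your argument is correct and substantially more detailed than the paper's own treatment, which consists of a single sentence citing Alling--Greenleaf \cite[Theorem~1.6.7]{allinggreenleaf}. Your explicit construction of $K_\mathbf{O}$ as the orientation double cover equipped with sheet-charts, the chart-by-chart verification that the lift $g$ is analytic and unique, and the deck-transformation argument for the remaining bullets are precisely the content hidden behind that citation. The one place worth an extra sentence is the degree computation in the fourth bullet: when $K$ is orientable, $K_\mathbf{O}$ is disconnected and the formula $\deg h=\deg f\cdot\deg g$ must be read componentwise; but in fact your own uniqueness argument already shows that a \emph{connected} Riemann double cover of an orientable $K$ cannot carry an antiholomorphic deck involution (the nontrivial deck map would then be holomorphic), so $X$ is forced to split and the comparison goes through component by component. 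With that remark the argument is complete.
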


\begin{proof}
This is just a slight rewording of Alling and Greenleaf \cite[Theorem 1.6.7]{allinggreenleaf}.
\end{proof}
 
If $K$ is a Klein surface then $(K_\mathbf{O},\sigma)$ is a dianalytic symmetric Riemann surface, bordered if and only if $\partial K \neq \emptyset$. Given a dianalytic morphism of Klein surfaces it lifts to a morphism of dianalytic symmetric Riemann surfaces. This defines a functor from  Klein surfaces with dianalytic morphisms to dianalytic symmetric Riemann surfaces. Given a dianalytic symmetric Riemann surface $(X,\sigma')$ then $(X,q)\cong((X/\sigma')_\mathbf{O},f)$. Since $f$ is unramified then maps of dianalytic symmetric Riemann surfaces give dianalytic maps of the underlying Klein surfaces. In particular we can deduce:

\begin{proposition}\label{prop:odoubleequiv}
There is an equivalence of categories $\dklein\rightarrow\dsymriem$ given by taking the orienting double.
\qed
\end{proposition}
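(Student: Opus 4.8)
The plan is to promote the orienting double to a functor and pair it with the quotient functor. Write $\mathbf{O}\colon\dklein\to\dsymriem$ for the assignment $K\mapsto(K_\mathbf{O},\sigma)$ of Lemma \ref{lem:odouble} (the target lands in $\dsymriem$ by the remark preceding the statement), and write $Q\colon\dsymriem\to\dklein$ for $(X,\sigma')\mapsto X/\sigma'$, equipped with its canonical dianalytic structure. I would show $\mathbf{O}$ and $Q$ are quasi-inverse.

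First I would check $\mathbf{O}$ is functorial. Given a dianalytic morphism $\phi\colon K\to K'$, the composite $\phi\circ f\colon K_\mathbf{O}\to K'$ is a morphism of Klein surfaces with $(\phi f)^{-1}(\partial K')=\partial K_\mathbf{O}$ (both $f$ and $\phi$ satisfy this boundary condition, and it is closed under composition), so the universal property in Lemma \ref{lem:odouble} produces a \emph{unique} analytic lift $\phi_\mathbf{O}\colon K_\mathbf{O}\to K'_\mathbf{O}$ with $f'\circ\phi_\mathbf{O}=\phi\circ f$. Uniqueness of lifts forces $\mathrm{id}_\mathbf{O}=\mathrm{id}$ and $(\psi\phi)_\mathbf{O}=\psi_\mathbf{O}\phi_\mathbf{O}$, and it also forces $\phi_\mathbf{O}\circ\sigma=\sigma'\circ\phi_\mathbf{O}$, since both sides are lifts of $\phi\circ f=\phi\circ f\circ\sigma$ through $f'$ (one may instead invoke uniqueness of $\sigma'$, the last bullet of Lemma \ref{lem:odouble}). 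Thus $\phi_\mathbf{O}$ is a morphism in $\dsymriem$ and $\mathbf{O}$ is a functor.

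Next I would produce the two natural isomorphisms. For $Q\circ\mathbf{O}\cong\mathrm{id}$: the double cover $f\colon K_\mathbf{O}\to K$ satisfies $f\circ\sigma=f$ and has $\sigma$ generating its deck transformations (this is the content of ``$f$ unramified, $\sigma$ unique, $K_\mathbf{O}$ disconnected iff $K$ orientable'' in Lemma \ref{lem:odouble}), so $f$ descends to a dianalytic isomorphism $K_\mathbf{O}/\sigma\xrightarrow{\ \sim\ }K$, evidently natural in $K$. For $\mathbf{O}\circ Q\cong\mathrm{id}$: given $(X,\sigma')\in\dsymriem$, the quotient map $q\colon X\to X/\sigma'$ is a dianalytic double cover admitting the involution $\sigma'$ with $q^{-1}(\partial(X/\sigma'))=\partial X$, so by the fourth bullet of Lemma \ref{lem:odouble} the pair $(X,q)$ is \emph{the} orienting double of $X/\sigma'$; the resulting canonical isomorphism $X\cong(X/\sigma')_\mathbf{O}$ over $X/\sigma'$ intertwines $\sigma'$ with $\sigma$ by uniqueness of $\sigma$, and is natural in $(X,\sigma')$ by the uniqueness in the universal property. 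Hence $\mathbf{O}$ is an equivalence of categories.

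The only part needing genuine care — everything else being the formalities of quasi-inverse functors — is the bookkeeping around the universal property: one must verify the boundary condition $h^{-1}(\partial\,\cdot\,)=\partial\,\cdot\,$ is preserved by the composites and lifts in question so that Lemma \ref{lem:odouble} actually applies, after which all functoriality and equivariance statements collapse to the uniqueness clauses there. This boundary behaviour is precisely where the orienting double (rather than the complex double) is the correct construction, as emphasised in the informal discussion above. An equivalent and perhaps slicker route is to observe directly that $\mathbf{O}$ is fully faithful, with the inverse of $\phi\mapsto\phi_\mathbf{O}$ given by descent to quotients, and essentially surjective by the fourth bullet of Lemma \ref{lem:odouble}; I would likely present it this way.
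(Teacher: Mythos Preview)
Your proposal is correct and follows essentially the same approach as the paper: the paragraph immediately preceding the proposition sketches exactly your argument, namely that dianalytic morphisms lift via the universal property of Lemma \ref{lem:odouble} to give functoriality of the orienting double, that $(X,q)\cong((X/\sigma')_\mathbf{O},f)$ for any dianalytic symmetric Riemann surface, and that the unramifiedness of $f$ ensures morphisms descend to dianalytic morphisms on quotients. Your write-up simply supplies the formal details (naturality, equivariance via uniqueness of lifts, the two triangle identities) that the paper leaves implicit before its \qed.
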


Given a Klein surface $K$ we can also construct the \emph{complex double} $K_\mathbf{C}$ of a Klein surface $K$. The complex double $K_\mathbf{C}$ is a symmetric Riemann surface without boundary that is disconnected if and only if $K$ is orientable and has empty boundary. In particular for an orientable surface of genus $g$ with $h$ boundary components it is obtained by taking two copies of the surface with opposite orientations and gluing along the boundary in an orientation preserving manner to give a symmetric Riemann surface without boundary of genus $2g+h-1$, with the antiholomorphic involution switching the two copies. For an unorientable surface with $g$ handles, $u$ crosscaps and $h$ boundary components the complex double is a connected symmetric Riemann surface without boundary of genus $2g+h+u-1$ although the construction in this case is less simple to describe and we refer to Alling and Greenleaf \cite[Theorem 1.6.1]{allinggreenleaf} for full details. In particular, similar to the orienting double, the complex double can in fact be realised as the universal Riemann surface without boundary over $K$ in the category $\klein$. It is then not hard to follow the same process as above and show the well known result:

\begin{proposition}\label{prop:cdoubleequiv}
There is an equivalence of categories $\klein\rightarrow\symriem$ given by taking the complex double.
\qed
\end{proposition}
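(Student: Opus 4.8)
The plan is to run the same argument that established Proposition~\ref{prop:odoubleequiv} for the orienting double, now with the complex double in place of $K_\mathbf{O}$ and with the word ``dianalytic'' suppressed throughout. First I would define a functor $\klein\to\symriem$ sending a Klein surface $K$ to the pair $(K_\mathbf{C},\sigma)$, where $\sigma$ is the canonical antiholomorphic involution covering the identity on $K$; by the results of Alling and Greenleaf \cite{allinggreenleaf} this is a symmetric Riemann surface without boundary. To define it on morphisms I would use the universal property recalled above, that $K_\mathbf{C}$ is the universal Riemann surface without boundary over $K$ in $\klein$: given $\phi\co K\to K'$, the composite of $\phi$ with the covering map $p\co K_\mathbf{C}\to K$ is a morphism out of a boundaryless Riemann surface, hence factors uniquely and analytically through $K'_\mathbf{C}$, yielding $\phi_\mathbf{C}\co K_\mathbf{C}\to K'_\mathbf{C}$. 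Functoriality is immediate from the uniqueness clause, and the identities $p\circ\sigma=p$, $p'\circ\sigma'=p'$ together with uniqueness of lifts of $\phi\circ p$ through $p'$ force $\sigma'\circ\phi_\mathbf{C}=\phi_\mathbf{C}\circ\sigma$, so $\phi_\mathbf{C}$ is indeed a morphism in $\symriem$.

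Next I would define a functor $\symriem\to\klein$ in the opposite direction by $(X,\sigma)\mapsto X/\sigma$, using the fact (again Alling and Greenleaf \cite{allinggreenleaf}) that the quotient carries a canonical dianalytic structure for which the quotient map is a morphism of Klein surfaces, and that a morphism of symmetric Riemann surfaces descends to a morphism of the quotient Klein surfaces. One must keep in mind here that objects of $\symriem$ may be disconnected, which corresponds precisely to the case in which $K_\mathbf{C}$ is disconnected, namely $K$ orientable without boundary.

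It then remains to produce the two natural isomorphisms. The isomorphism $(K_\mathbf{C})/\sigma\cong K$ is essentially built into the construction of the complex double in \cite[Theorem 1.6.1]{allinggreenleaf}, and its naturality is routine from the uniqueness clauses. The reverse isomorphism $(X/\sigma)_\mathbf{C}\cong X$ is the step I expect to be the main obstacle: the idea is to show that $X$, together with its quotient map onto $X/\sigma$, already satisfies the universal property characterising the complex double of $X/\sigma$ --- equivalently, to verify the analogue, for the complex double, of the clause of Lemma~\ref{lem:odouble} asserting that any double cover admitting a compatible antiholomorphic involution (and here folding over $\partial K$) is universal. Granting this, uniqueness of the universal object yields the required isomorphism, and compatibility with the involutions and naturality follow as before. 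The genuinely delicate point is checking that an arbitrary $\klein$-morphism $Y\to X/\sigma$ out of a boundaryless Riemann surface lifts through the quotient map, given that this map is branched over the image of $\mathrm{Fix}(\sigma)$ and that $\klein$-morphisms are allowed to fold there; this is exactly where the definition of morphism in $\klein$, with its folding map, is doing the work, and it is dealt with as in \cite{allinggreenleaf}.
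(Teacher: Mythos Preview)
Your proposal is correct and follows exactly the approach the paper indicates: the paper does not give a proof of this proposition but states just before it that ``it is not hard to follow the same process as above'' (meaning the argument for Proposition~\ref{prop:odoubleequiv}) using the universal property of the complex double in $\klein$, which is precisely what you do. Your identification of the one delicate point---lifting a $\klein$-morphism through the branched quotient map using the folding in the definition of morphism---is accurate and is indeed where the work in \cite{allinggreenleaf} enters.
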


\begin{remark}
The categories $\klein$ and $\dklein$ have the same objects and will also have the same moduli spaces (which can be identified with those of symmetric Riemann surfaces without boundary by \autoref{prop:cdoubleequiv}). As mentioned in the outline of this section the difference becomes much more noticeable when we consider nodal Klein surfaces, which are then no longer equivalent to nodal symmetric Riemann surfaces without boundary. However nodal Klein surfaces (with just dianalytic morphisms) are still equivalent to nodal symmetric Riemann surfaces possibly with boundary. Therefore we will actually obtain two different partial compactifications of moduli spaces.
\end{remark}

Given a Klein or Riemann surface whose underlying surface has $g$ handles, $0\leq u\leq 2$ crosscaps and $h$ boundary components we define the topological type to be $(g,u,h)$.

\subsection{Klein surfaces and hyperbolic geometry}

Recall that a connected hyperbolic Riemann surface without boundary admits a unique complete hyperbolic metric. If $X$ is a bordered Riemann surface whose complex double $X_\mathbf{C}$ (which is connected without boundary) is hyperbolic, then the antiholomorphic involution on $X_\mathbf{C}$ is an isometry with respect to the unique hyperbolic metric on $X_\mathbf{C}$ and so we can construct a unique (up to conformal isometry) hyperbolic metric on $X$, compatible with the analytic structure, such that the boundary (which corresponds to the fixed points of the involution) is geodesic.

If $K$ is a Klein surface whose complex double is hyperbolic then we can repeat this construction by taking the unique complete hyperbolic metric on $K_\mathbf{C}$. Therefore $K$ has a unique (up to isometry) hyperbolic metric, compatible with the dianalytic structure, such that the boundary is geodesic. Dianalytic morphisms of Klein surfaces correspond to conformal maps on hyperbolic surfaces. Since our surfaces are now unoriented by conformal maps we mean maps which preserve angles (as opposed to oriented angles).

The only Klein surfaces with a non-hyperbolic complex double are those of topological type $(g,u,h)$ with $2g+h+u-2 \leq 0$, since the complex double has topological type $(2g+h+u-1,0,0)$. The only such surfaces with $h>0$ are the disc, the annulus and the M\"obius strip.

Let $K$ be a Klein surface. Then since $K_\mathbf{O}$ covers $K$ we can pull back the hyperbolic metric on $K$ such that the involution on $K_\mathbf{O}$ is an isometry. Analytic maps between orienting double covers correspond to conformal maps of double covers and the boundary of $K_\mathbf{O}$ is geodesic and this is the same metric inherited from $(K_\mathbf{O})_\mathbf{C}$.

Using the hyperbolic metric on a Klein surface $K$ we can adapt the methods outlined by Costello in \cite{costello1} and elaborated upon in \cite{costello2} to construct a deformation retract on the moduli space of Klein surfaces which we will do below.

\subsection{Nodal Klein surfaces with oriented marked points}
We will need to allow Riemann surfaces and Klein surfaces with certain nodes and marked points.

A singular topological surface $(X,N)$ is a Hausdorff space $X$ with a discrete set $N\subset X$ (the set of singularities) such that $X\setminus N$ is a topological surface. As usual such surfaces will be compact (so $N$ will be finite) and connected and may have boundary unless otherwise stated. The boundary of a singular surface is defined to be the boundary of $X\setminus N$.

\begin{definition}
Let $(X,N)$ be a singular surface. A \emph{boundary node} is a singularity $z\in N$ with a neighbourhood homeomorphic to a neighbourhood of $(0,0)\in\{(x,y)\in(\mathbb{C}^+)^2 : xy=0\}$ such that $z\mapsto (0,0)$. Similarly an \emph{interior node} is a singularity with a neighbourhood homeomorphic to $(0,0)\in\{(x,y)\in\mathbb{C}^2 : xy=0\}$. We set $N_1$ to be the set of interior nodes and $N_2$ to be the set of boundary nodes. If $X$ has only nodal singularities then an atlas on $X$ is given by charts on $X\setminus N$ together with charts at the nodes as described. We call a singular surface with only nodal singularities a \emph{nodal surface}.
\end{definition}

Let $B=\{(x,y)\in(\mathbb{C}^+)^2 : xy=0\}$ and $B^*=B\setminus (0,0)$. Let $I=\{(x,y)\in\mathbb{C}^2 : xy=0\}$ and $I^*=I\setminus (0,0)$. Regarding the smooth curves $B^*$ and $I^*$ as Riemann surfaces with boundary we have a notion of analytic and anti-analytic maps to or from subsets of $B^*$ and $I^*$. We say a map to or from a neighbourhood $U$ of $(0,0)\in B$ or $U'$ of $(0,0)\in I$ is analytic or anti-analytic if it is analytic or anti-analytic when restricted to $U\cap B^*$ or $U'\cap I^*$. Dianalytic maps are again maps which restrict to analytic or anti-analytic maps on each connected component. Note in particular that if $U$ or $U'$ is connected then dianalytic maps on $U$ or $U'$ are either analytic or anti-analytic everywhere (even though $U\cap B^*$ and $U\cap I^*$ are disconnected). We therefore have a notion of a transition function between two charts on a nodal surface being analytic or dianalytic.

\begin{definition}
A \emph{nodal Riemann surface} is a nodal surface $(X,N)$ together with a maximal analytic atlas. A \emph{nodal Klein surface} is a nodal surface $(K,N)$ together with a maximal dianalytic atlas. By an irreducible component of a nodal surface we mean a connected component of the surface obtained by pulling apart all the nodes. Note that this is different from a connected component of $K\setminus N$ since an irreducible component will include points that were formerly nodes.
\end{definition}

We will mostly be concerned with Klein surfaces having only boundary nodes. We will also need a second different notion of a boundary node on a Klein surface.

\begin{definition}
A \emph{na\"ive nodal Klein surface} is a nodal surface $(K,N)$ with only boundary nodes together with a maximal dianalytic atlas on each irreducible component.
\end{definition}

Note that this does differ from the previous notion in the sense that we no longer have a dianalytic structure around the boundary nodes. Indeed, on a neighbourhood of a boundary node there are charts on the intersection with each irreducible component, but not a chart on the whole neighbourhood.

A morphism of nodal Riemann surfaces is a non-constant continuous map that is analytic on the charts (including at nodes). We can also define morphisms easily for na\"ive nodal Klein surfaces: a morphism is given by a non-constant continuous map which takes irreducible components to irreducible components and such that the map induced on each irreducible component is a morphism of smooth Klein surfaces.

\begin{definition}
A \emph{nodal symmetric Riemann surface} $(X, \sigma)$ is a nodal Riemann surface with an antiholomorphic involution $\sigma\co X\rightarrow X$.
\end{definition}

We will now discuss quotients of nodal Riemann surfaces informally. Given a nodal symmetric Riemann surface $(X,\sigma)$ we can form the quotient $q\co X\rightarrow X/\sigma$. This is a topological nodal surface and each irreducible component has a canonical non-singular dianalytic structure. Let $n\in N_1$ be an interior node. Since $\sigma$ must take nodes to nodes, if $\sigma(n)\neq n$ then $q(n)$ will be an interior node and we can extend the dianalytic structure about $q(n)$ in a unique way such that $q$ is dianalytic on the charts about $n$ and $q(n)$. If $n$ is fixed by $\sigma$ then $q(n)$ will be either an interior point or a boundary node. In the second case there is not a canonical way of choosing a dianalytic structure about $q(n)$ and so $q(n)$ is a na\"ive boundary node. Let $n'\in N_2$ be a boundary node. Similarly $q(n')$ will be either a boundary node or, if $n'$ is fixed by $\sigma$, a boundary point. In either case we can choose a dianalytic structure about $q(n')$ in a canonical way. Since we are interested mainly in Klein surfaces with only boundary nodes we make the following definition:

\begin{definition}
An \emph{admissible symmetric Riemann surface} $(X,\sigma)$ is a nodal symmetric Riemann surface $(X,N)$ such that $q(n)$ is a boundary node (a na\"ive boundary node if $n$ is an interior node) for all nodes $n\in N$.
\end{definition}

It is now not too difficult to work out what a morphism of nodal Klein surfaces should be, allowing folding maps along nodes as just described. Then the above informal discussion can be made precise by saying that there is a unique structure of a nodal Klein surface (possibly with some na\"ive nodes) on the quotient of a nodal Riemann surface such that the quotient map is a morphism of Klein surfaces. We will not give the details however since we only really need to consider dianalytic morphisms here. A dianalytic morphism $f\co K\rightarrow K'$ of nodal Klein surfaces is a non-constant continuous map that is dianalytic on all the charts (including at nodes). In particular such a map induces dianalytic maps on the irreducible components. A dianalytic nodal symmetric Riemann surface is an admissible symmetric Riemann surface such that the quotient map is dianalytic. In particular such surfaces have only boundary nodes.

From now on all our surfaces may be nodal unless otherwise stated. We need nodal surfaces with marked points.

\begin{definition}
A \emph{Klein/Riemann surface with $n$ marked points} $(X,\mathbf{p})$ is a nodal Klein/Riemann surface $(X,N)$ equipped with an ordered $n$--tuple $\mathbf{p}=(p_1,\ldots,p_n)$ of distinct points on $X\setminus N$. A morphism $f\co(X,\mathbf{p})\rightarrow(X',\mathbf{p'})$ of surfaces with $n$ marked points is a morphism of the underlying surface such that $f(p_i)=p'_i$.
\end{definition}

\begin{definition}
\Needspace*{4\baselineskip}\mbox{}
\begin{itemize}
\item A \emph{symmetric Riemann surface $X$ with $(m,n)$ marked points} $(X,\sigma,\mathbf{p},\mathbf{p'})$ is a nodal symmetric Riemann surface $(X,\sigma)$ with an ordered $2m$--tuple $\mathbf{p}=(p_1,\ldots,p_{2m})$ of distinct points on $X\setminus N$ such that $\sigma(p_i)=p_{m+i}$ for $i=1,\ldots,m$ and an ordered $n$--tuple $\mathbf{p'}=(p'_1,\ldots,p'_{n})$ of distinct points on $X\setminus N$ such that $\sigma(p'_j)=p'_j$ for $j=1,\ldots,n$.
\item A morphism $f\co (X,\sigma,\mathbf{p},\mathbf{p'})\rightarrow(Y,\tau,\mathbf{r},\mathbf{r'})$ is a morphism of the underlying symmetric Riemann surfaces such that $f(p_i)=r_i$ and $f(p'_j)=r'_j$.
\item We say a marked symmetric Riemann surface is \emph{admissible} if the underlying symmetric Riemann surface is and the points $q(p_i)$ and $q(p'_j)$ all lie in the boundary of the quotient. In this case all the $p_i$ must be on the boundary.
\end{itemize}
\end{definition}

Once again we can discuss quotients of marked symmetric Riemann surfaces. The quotient Klein surface is in a natural way a Klein surface with $m+n$ marked points (and if the surface is admissible all the marked points of the Klein surface lie on the boundary). In fact it has more structure: if $p_i$ is a marked point of a symmetric Riemann surface with $\sigma(p_i)=p_{m+i}\neq p_i$ then this gives locally an orientation about $q(p_i)$ induced from the chart about $p_i$. This motivates the following definition:

\begin{definition}
A \emph{Klein surface with $n$ oriented marked points} is a Klein surface with marked points $(K,\mathbf{p})$ equipped with a choice of orientation locally about each marked point (more precisely, a choice of one of the two germs of orientations on orientable neighbourhoods at each marked point).
\end{definition} 

Note finally that dianalytic marked symmetric Riemann surfaces (which are by definition admissible) can only have marked points on the boundary.

We are now ready to define our categories of interest. In particular we are interested in Klein surfaces with nodes and marked points all on the boundary. Equivalently this means we are also interested in admissible symmetric Riemann surfaces.

\begin{definition}
\Needspace*{4\baselineskip}\mbox{}
\begin{itemize}
\item The category $\nklein$ has objects Klein surfaces with only na\"ive boundary nodes and marked points (not oriented) on the boundary with morphisms as defined above for na\"ive nodal surfaces.
\item The category $\dnklein$ has objects Klein surfaces with only boundary nodes (but not na\"ive nodes) and oriented marked points on the boundary with dianalytic morphisms as defined above.
\item The category $\nsymriem$ has objects admissible symmetric Riemann surfaces with marked points and without boundary. Morphisms are analytic maps as defined above.
\item The category $\dnsymriem$ has objects dianalytic symmetric Riemann surfaces (possibly with boundary) with marked points. Morphisms are analytic maps as defined above.
\end{itemize}
\end{definition}

We can extend the notion of an orienting double to objects $(K,\mathbf{p})$ in $\dnklein$ by first constructing the orienting double on each irreducible component and gluing in the canonical way induced by the dianalytic structure at the nodes to obtain a dianalytic symmetric Riemann surface $K_\mathbf{O}$. If $f\co K_\mathbf{O}\rightarrow K$ is the covering map then $f^{-1}(p_i)$ gives two points in $K_\mathbf{O}$. To make $K_\mathbf{O}$ a marked surface we need to order these two points for each $i$. But we can do this using the local orientation about $p_i$ which allows us to canonically choose an $n$--tuple $\mathbf{q}=(q_1,\ldots,q_n)$ of distinct points on the boundary of $K_\mathbf{O}$ such that $f(q_i)=p_i$ and $f$ preserves the local orientations about $q_i$ and $p_i$. Then the orienting double of $K$ is defined as $(K_\mathbf{O},\sigma,\mathbf{p'},\mathbf{0})$ where $\mathbf{p'}=(q_1,\ldots,q_n,\sigma(q_1),\ldots,\sigma(q_n))$ and $\sigma$ is the antiholomorphic involution on $K_\mathbf{O}$. This is an object in $\dnsymriem$.

Given a Klein surface with marked (but not necessarily oriented) points choosing such an $n$--tuple $\mathbf{q}$ of points in $K_\mathbf{O}$ is clearly equivalent to providing local orientations at each $p_i$. Since this data can sometimes be easier to work with we will therefore also denote a Klein surface with $n$ oriented marked points by $(K,\mathbf{p},\mathbf{q})$.

\begin{remark}\label{rem:riemklein}
A Riemann surface with marked points can be thought of as a Klein surface with oriented marked points using the canonical orientation. If it is in $\dnklein$ then its orienting double is a disjoint union of two copies of itself  and so the canonical orientation means we choose points $q_i$ in the component that maps analytically under the quotient map.
\end{remark}

By showing nodal versions of the properties in \autoref{lem:odouble} it is not too difficult to obtain the following marked nodal analogue of \autoref{prop:odoubleequiv}:

\begin{proposition}\label{prop:odoubleequiv2}
There is an equivalence of categories $\dnklein\rightarrow\dnsymriem$ given by taking the orienting double.
\qed
\end{proposition}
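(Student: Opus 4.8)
The plan is to reduce everything to the smooth case: establish the nodal, marked-point analogue of Lemma~\ref{lem:odouble} and then run the same formal argument that produced Proposition~\ref{prop:odoubleequiv}. First I would make precise the orienting-double construction sketched above the statement and check it gives a functor $\dnklein\to\dnsymriem$. Given $(K,\mathbf{p},\mathbf{q})$ in $\dnklein$, pull the boundary nodes apart, form the orienting double $C_\mathbf{O}$ of each smooth bordered irreducible component $C$ via Lemma~\ref{lem:odouble}, and reglue the $C_\mathbf{O}$ along the preimages of the nodes. The content here is that the analytic chart at a node of $K$ determines this regluing canonically: over a boundary node $n$ of $K$ there are one or two nodes in $K_\mathbf{O}$ according to whether the local dianalytic picture at $n$ is ``twisted'', and in either case the atlas at $n$ lifts to a well-defined analytic atlas upstairs compatible with the component-wise antiholomorphic involutions, so these patch to a single $\sigma$ on $K_\mathbf{O}$. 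Together with the double cover $f\co K_\mathbf{O}\to K$ satisfying $f^{-1}(\partial K)=\partial K_\mathbf{O}$ and the lift of the local orientations $\mathbf{q}$ to an ordered tuple of boundary points, this exhibits $(K_\mathbf{O},\sigma,\mathbf{p'},\mathbf{0})$ as an object of $\dnsymriem$.

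Next I would upgrade the remaining items of Lemma~\ref{lem:odouble} to the nodal setting: $f$ is an unramified double cover with $f\circ\sigma=f$ and $\sigma$ the unique such involution, $K_\mathbf{O}/\sigma\cong K$, and $K_\mathbf{O}$ is universal among nodal Riemann surfaces mapping dianalytically to $K$. Each of these is checked component by component using the smooth statement and then verifying compatibility at the nodes. The universal property immediately makes the construction functorial: a dianalytic morphism $\varphi\co K\to K'$ gives a dianalytic morphism $\varphi\circ f\co K_\mathbf{O}\to K'$ with $(\varphi\circ f)^{-1}(\partial K')=\partial K_\mathbf{O}$, which factors uniquely through $K'_\mathbf{O}$ as $f'\circ\varphi_\mathbf{O}$; uniqueness forces $\varphi_\mathbf{O}\circ\sigma=\sigma'\circ\varphi_\mathbf{O}$, and $\varphi_\mathbf{O}$ preserves the marked points because $f$ and $f'$ preserve the local orientations.

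For the quasi-inverse I would take the quotient functor. If $(X,\sigma',\mathbf{p},\mathbf{p'})$ lies in $\dnsymriem$ then $X$ is dianalytic, so $q\co X\to X/\sigma'$ is a dianalytic morphism, hence $X/\sigma'$ carries a genuine (non-na\"ive) dianalytic structure at its boundary nodes, and the conjugate pairs $\sigma'(p_i)=p_{m+i}$ descend to boundary marked points equipped with the local orientations coming from the charts at the $p_i$; one checks this is an object of $\dnklein$ and that analytic morphisms descend to dianalytic ones. Finally the nodal Lemma~\ref{lem:odouble} supplies the two natural isomorphisms: $(K_\mathbf{O})/\sigma\cong K$ in $\dnklein$ because $f$ is exactly the quotient by $\sigma$, and $(X/\sigma')_\mathbf{O}\cong X$ in $\dnsymriem$ by applying the universal property to the dianalytic unramified map $q\co X\to X/\sigma'$; naturality is immediate and the equivalence follows.

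The main obstacle is the nodal refinement of Lemma~\ref{lem:odouble} in the first two steps: checking that the regluing of the component-wise orienting doubles along node-preimages, dictated by the dianalytic chart at each node, is canonical and compatible with the involutions, that the resulting $\sigma$ is unique, and that $f$ remains unramified with $f^{-1}(\partial K)=\partial K_\mathbf{O}$, and then that $K_\mathbf{O}$ keeps its universal property. This is precisely where it matters that a boundary node of an object of $\dnklein$ carries a dianalytic structure (encoding the local ``twist'') rather than being na\"ive, and it is also what makes the double a \emph{dianalytic} symmetric Riemann surface; granting this, the rest is the formal adjunction-type argument of Proposition~\ref{prop:odoubleequiv}, now bookkeeping marked points via the tuples $\mathbf{q}$.
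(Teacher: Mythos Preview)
Your proposal is correct and follows precisely the route the paper indicates: the paper's own proof is essentially the single sentence preceding the statement (``By showing nodal versions of the properties in Lemma~\ref{lem:odouble} it is not too difficult to obtain the following marked nodal analogue of~\ref{prop:odoubleequiv}''), together with the explicit description of the orienting double for objects of $\dnklein$ given just before. You have simply unpacked in detail what the paper leaves as a straightforward exercise.
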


Similarly given an object in $\nklein$ we can construct the complex double by first constructing the complex double on each irreducible component and then gluing to obtain an admissible symmetric Riemann surface $K_\mathbf{C}$ without boundary. If $f\co K_\mathbf{C}\rightarrow K$ is the covering map then $f^{-1}(p_i)$ is a single point in $K_\mathbf{C}$ so we obtain an object in $\nsymriem$. Once again we can show the marked nodal analogue of \autoref{prop:cdoubleequiv}:

\begin{proposition}\label{prop:cdoubleequiv2}
There is an equivalence of categories $\nklein\rightarrow\nsymriem$ given by taking the complex double.
\qed
\end{proposition}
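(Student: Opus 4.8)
The plan is to follow the pattern of Propositions \ref{prop:cdoubleequiv} and \ref{prop:odoubleequiv2}. The complex double has already been defined on objects of $\nklein$ in the discussion above, so what remains is to promote it to a functor $\nklein\rightarrow\nsymriem$ and to produce a quasi-inverse, namely the quotient by the antiholomorphic involution. The key structural fact I would use repeatedly is that an irreducible component of an object of $\nklein$ is a smooth (possibly bordered) Klein surface, while an irreducible component of an object of $\nsymriem$ is a smooth Riemann surface without boundary, so the non-nodal equivalence $\klein\simeq\symriem$ of \ref{prop:cdoubleequiv} can be applied componentwise.

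First I would check functoriality. A morphism $f\co(K,\mathbf{p})\rightarrow(K',\mathbf{p'})$ in $\nklein$ restricts to the smooth loci and sends nodes to nodes, hence sends each connected component of $K\setminus N$ into one of $K'\setminus N'$ and therefore carries irreducible components to irreducible components; on each of these it is a morphism of smooth Klein surfaces, which lifts uniquely to a morphism of complex doubles by \ref{prop:cdoubleequiv}. Since the complex double of a na\"ive boundary node is obtained by wedging, at the single $\sigma$--fixed interior point lying over the node, the two discs which are the complex doubles of the two half--disc neighbourhoods of that node, and since wedging surfaces at a point involves no choices, these componentwise lifts agree over the nodes and glue to a morphism $f_\mathbf{C}\co K_\mathbf{C}\rightarrow K'_\mathbf{C}$ with $f_\mathbf{C}\circ\sigma=\sigma'\circ f_\mathbf{C}$; it respects the marked points because boundary points of $K$ have unique $\sigma$--fixed preimages. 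Compatibility with composition and identities is then immediate.

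Next I would construct the quasi-inverse. Given $(X,\sigma,\mathbf{0},\mathbf{p'})$ in $\nsymriem$, the quotient $K=X/\sigma$ carries a canonical dianalytic structure off the images of the nodes. As $X$ has no boundary it has no boundary nodes, so every node of $X$ is an interior node; admissibility forces each such node to be $\sigma$--fixed (a swapped pair would descend to an interior node of $K$) and forces $\sigma$ to preserve its two branches (swapping the branches would make the image a smooth point of $K$), so every node of $X$ descends to a na\"ive boundary node of $K$ and $K$ has only na\"ive boundary nodes. The fixed points $p'_j$ descend to marked points on $\partial K$, giving an object $(K,q(\mathbf{p'}))$ of $\nklein$; equivariant analytic morphisms descend to morphisms in $\nklein$, so this is a functor $\nsymriem\rightarrow\nklein$. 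To see the two functors are mutually quasi-inverse, for $K$ in $\nklein$ I would exhibit $(K_\mathbf{C})/\sigma\cong K$ using the natural isomorphism of \ref{prop:cdoubleequiv} on each irreducible component, the na\"ive nodes being recovered from the wedge points; and for $X$ in $\nsymriem$ I would exhibit $(X/\sigma)_\mathbf{C}\cong X$ by analysing each irreducible component of $X/\sigma$ according to whether it is the quotient of a single $\sigma$--invariant component of $X$ or of a $\sigma$--swapped pair, in both cases \ref{prop:cdoubleequiv} returning the original component(s), with the na\"ive nodes reassembling into the original interior nodes.

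The main obstacle, as always once na\"ive nodes enter, is the local reconciliation at the nodes: I would need to verify carefully that the complex double of a na\"ive boundary node is \emph{canonically} an interior, $\sigma$--fixed, branch--preserving node of the double, so that no extra gluing datum is needed even though $K$ carries no dianalytic structure there, and, conversely, that over a boundary node of the quotient of an admissible symmetric Riemann surface without boundary no other local model of node can occur. Once this dictionary between na\"ive boundary nodes and $\sigma$--fixed branch--preserving interior nodes is established, the remainder is routine bookkeeping with connected components, marked points, and the equivariance of morphisms.
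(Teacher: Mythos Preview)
Your proposal is correct and follows exactly the approach the paper intends: the paper gives no proof of this proposition beyond the sentence ``Once again we can show the marked nodal analogue of \ref{prop:cdoubleequiv}'' and a \qed, so your componentwise reduction to the smooth equivalence \ref{prop:cdoubleequiv}, together with the local analysis at nodes, is precisely the omitted argument. Your identification of the local dictionary---that admissibility forces interior nodes of $X$ to be $\sigma$--fixed with branches preserved, hence to descend to na\"ive boundary nodes---matches the paper's informal discussion of quotients preceding the definition of admissibility.
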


\begin{remark}
We can still define the complex double for surfaces in $\dnklein$ however two Klein surfaces that are not isomorphic may have isomorphic complex doubles.
\end{remark}

Given a boundary node on a Riemann surface $X$ we can replace the node with a narrow oriented strip. We can also replace interior nodes with a narrow oriented cylinder. In this way we can obtain from $X$ a non-singular oriented topological surface.

We define the topological type of a Riemann surface $X$ with $n$ marked points as $(g,0,h,n)$ where $(g,0,h)$ is the topological type of the non-singular oriented surface obtained by the above process.

Given a Klein surface $K$ in $\dnklein$ with $n$ marked points we consider $K_\mathbf{O}$ as a Riemann surface by forgetting the symmetry and let $(\tilde{g},0,\tilde{h},2n)$ be its topological type. Then if $K_\mathbf{O}$ is disconnected the topological type of $K$ is defined as the topological type of one of the connected components of $K_\mathbf{O}$. If $K_\mathbf{O}$ is connected then the topological type of $K$ is defined as $(g,u,h,n)$ where $h=\frac{\tilde{h}}{2}$ and $g$ and $u$ are the unique solutions to $\tilde{g}+1=2g+u$ with $0<u\leq 2$.

For admissible symmetric Riemann surfaces without boundary in $\nsymriem$ we define their topological type as that of the underlying marked Riemann surface obtained by forgetting the symmetry.

The topological type of a symmetric Riemann surface in $\dnsymriem$ is defined as the topological type of its quotient Klein surface in $\dnklein$ and the topological type of a na\"ive nodal Klein surface in $\nklein$ is defined as the topological type of its complex double in $\nsymriem$.

\begin{definition}
A Klein or Riemann surface with $n$ (possibly oriented) marked points is \emph{stable} if it has only finitely many automorphisms.
\end{definition}

A non-singular Klein surface with $n$ (possibly oriented) marked points on the boundary (which we will assume is non-empty) is unstable precisely if it has the topological type of a disc and $n\leq 2$ or if it is an annulus with $n=0$ or a M\"obius strip with $n=0$ (since the orienting double of a M\"obius strip is an annulus). If the Klein surface has singularities (and so is in either $\nklein$ or $\dnklein$) then it is stable if and only if each connected component of its normalisation is. The normalisation is given by pulling apart all the nodes where each node gives two extra boundary marked points. It does not matter how we order these extra marked points.

\subsection{Moduli spaces of Klein surfaces}
In this section we discuss various moduli spaces and their relationships.

Let $\kb_{g,u,h,n}$ be the moduli space of stable Klein surfaces in $\dnklein$ with topological type $(g,u,h,n)$ and $h\geq 1$. Let $\ko_{g,u,h,n}\subset \kb_{g,u,h,n}$ be the subspace of non-singular Klein surfaces.

Due to \autoref{prop:odoubleequiv2} this can be identified with the moduli space of stable dianalytic symmetric Riemann surfaces in $\dnsymriem$. These moduli spaces are non-empty except for the cases when
\[(g,u,h,n)\in\{(0,0,1,0),(0,0,1,1),(0,0,1,2),(0,0,2,0),(0,1,1,0)\}.\]
There is an action of the group $\mathbb{Z}_2^{\times n}$ on $\kb_{g,u,h,n}$ given by flipping the orientations of marked points.

Let $\mb_{\tilde{g},n}$ be the moduli space of stable admissible symmetric Riemann surfaces in $\nsymriem$ with topological type $(\tilde{g},0,0,n)$. Let $\mo_{\tilde{g},n}\subset\mb_{\tilde{g},n}$ be the subspace of non-singular Riemann surfaces.

Due to \autoref{prop:cdoubleequiv2} this can be identified with the moduli space of stable Klein surfaces in $\nklein$. These moduli spaces are non-empty except for the cases when $(g,n)\in\{(0,0),(0,1),(0,2),(1,0)\}$. We observe that:
\[\left ( \coprod_{2g+h+u-1=\tilde{g}}\ko_{g,u,h,n}\right )\Biggr/\mathbb{Z}_2^{\times n} \quad\cong\quad \mo_{\tilde{g},n}\]

\begin{remark}
The slight abuse of notation here is potentially misleading. The full compactification of stable symmetric Riemann surfaces (the space of stable real algebraic curves) allowing all nodal Riemann surfaces without boundary, is very often denoted by $\mb$ (for example as in \cite{seppala,liu}). For us however it is an open subspace of this: the subspace of \emph{admissible} surfaces. Here neither $\mb_{\tilde{g},n}$ or $\kb_{g,u,h,n}$ are compact in general.
\end{remark}

Let $\nb_{g,h,n}$ be the moduli space of stable bordered Riemann surfaces with only boundary nodes and marked points on the boundary with topological type $(g,0,h,n)$. Let $\no_{g,h,n}\subset \nb_{g,h,n}$ be the subspace of non-singular bordered Riemann surfaces.

These are the moduli spaces considered by Costello \cite{costello1,costello2}. These spaces are non-empty except for the cases when
\[(g,h,n)\in\{(0,1,0),(0,1,1),(0,1,2),(0,2,0)\}.\]
By \autoref{rem:riemklein} we have a map $\nb_{g,h,n}\rightarrow\kb_{g,0,h,n}$, injective for $n>0$.

We now outline the construction of these spaces and their topology. We will follow Liu \cite{liu} closely and more details may be found there. The basic idea is to use the symmetric pants decomposition of the complex double to obtain Fenchel--Nielsen coordinates. In fact this will give an orbifold structure.

Recall that any stable Riemann surface $X$ of topological type $(\tilde{g},0,0,n)$ admits a \emph{pants decomposition}. More precisely there are $3\tilde{g} - 3 + n$ disjoint curves $\alpha_i$ on the surface which is obtained from puncturing $X$ at each marked point, with each curve being either a closed geodesic (in the hyperbolic metric) or a node, decomposing $X$ into a disjoint union of $2\tilde{g} - 2 + n$ pairs of pants whose boundary components are either one of the $\alpha_i$ or a puncture corresponding to a marked point. Furthermore if $(X,\sigma)$ is a stable symmetric Riemann surface then there exists a \emph{symmetric pants decomposition} that is invariant with respect to $\sigma$ (see Buser and Sepp\"al\"a \cite{buserseppala}). By this we mean that $\sigma$ induces a permutation on decomposing pairs of pants. 

We call a pants decomposition oriented if the pairs of pants are ordered and the boundary components of each pair of pants are ordered and each have a basepoint and each decomposing curve is oriented. This induces an ordering on the decomposing curves and so defines \emph{Fenchel--Nielsen coordinates}
\[(l_1,\ldots,l_{3\tilde{g}-3+n},\theta_1,\ldots,\theta_{3\tilde{g}-3+n})\]
where the $l_i$ are the lengths (in the hyperbolic metric) of the decomposing curves, and the $\theta_i$ are the angles between the basepoints of the two boundary components corresponding to the $i$--th decomposing curve. More precisely the ordering of the pairs of pants determines an ordering of the two basepoints on each decomposing curve and we set $\theta_i=2\pi\frac{\tau_i}{l_i}$ where $\tau_i$ is the distance one travels from the first basepoint to the second basepoint on the $i$--th decomposing curve in the direction that the curve is oriented. Note that we have $l_i\geq 0$ and $0\leq \theta_i < 2\pi$.

In the case that the pants decomposition is symmetric we may assume that the orientation of the pants decomposition has been chosen so that the symmetry permutes basepoints and reverses the orientation of decomposing curves which are not completely fixed by the symmetry. Note that in this case not all the coordinates are independent. In particular if a decomposing curve $\alpha_i$ is mapped to itself by the symmetry, then $\theta_i = 0$ or $\theta_i=\pi$. Similarly if the decomposing curves $\alpha_i$ and $\alpha_j$ are permuted by the symmetry then $l_i=l_j$ and $\theta_i=2\pi-\theta_j$.

\begin{definition}
A \emph{strong deformation} from a stable symmetric Riemann surface $(X',N',\sigma')$ to a stable symmetric Riemann surface $(X,N,\sigma)$ both of topological type $(\tilde{g},0,\tilde{h},n)$ is a continuous map $\kappa\co (X',N',\sigma')\rightarrow (X,N,\sigma)$ such that
\begin{itemize}
\item $\kappa$ takes boundary components to boundary components, interior nodes to interior nodes, boundary nodes to boundary nodes, preserves marked points and $\kappa\circ \sigma' = \sigma \circ\kappa$
\item for each interior node $n$ we have that $\kappa^{-1}(n)$ is either an interior node or an embedded circle in a connected component of $X'\setminus N'$
\item for each boundary node $n$ we have that $\kappa^{-1}(n)$ is either a boundary node or an embedded arc in a connected component of $X'\setminus N'$ with ends in $\partial X'$
\item $\kappa$ restricts to a diffeomorphism $\kappa\co\kappa^{-1}(X\setminus N)\rightarrow X\setminus N$.
\end{itemize}
\end{definition}

Given $X$ and $X'$ stable symmetric Riemann surfaces without boundary with oriented symmetric pants decompositions having decomposing curves $\alpha_i$ and $\alpha_i'$ respectively and a strong deformation $\kappa\co X'\rightarrow X$ we say that $\kappa$ is compatible with the oriented pants decompositions if $\kappa(\alpha_i')=\alpha_i$ and all the orientation data (the ordering of the pants, the ordering of the boundary components, the basepoints of the boundary components and the orientation of each $\alpha_i$) is preserved under $\kappa$. 

Given an oriented symmetric pants decomposition of $X$ with decomposing curves $\alpha_i$ and any strong deformation $\kappa\co X'\rightarrow X$, by choosing as decomposing curves the closed geodesics homotopic to each of the $\kappa^{-1}(\alpha_i)$ we can obtain a symmetric pants decomposition of $X'$ with decomposing curves $\alpha_i'$. Further there exists another strong deformation $\kappa'$ such that $\kappa'(\alpha_i')=\alpha_i$ and also an orientation of the pants decomposition of $X'$ so that it is pulled back from the oriented pants decomposition of $X$ under $\kappa'$. In particular $\kappa'$ is compatible with the oriented pants decompositions.

Recall that the complex structure on a pair of pants is determined, up to equivalence, by the lengths of the three boundary curves in the unique hyperbolic metric where the boundary curves are geodesic. Recall also that gluing pairs of pants along boundary components of common length is determined completely by the angle between two basepoints on the boundary components. Therefore if $X$ and $X'$ have the same Fenchel--Nielsen coordinates with respect to the pants decompositions preserved by $\kappa'$ then they are in fact biholomorphic.

\begin{definition}
A strong deformation from a stable Klein surface $(K',N')$ in $\dnklein$ to a stable Klein surface $(K,N)$ of topological type $(g,u,h,n)$ is a strong deformation between the orienting doubles.
\end{definition}

Note that a strong deformation of stable Klein surfaces induces a strong deformation on the complex doubles of the underlying na\"ive nodal Klein surfaces. Of course the converse is not true.

We are now ready to describe the topology on $\mb_{\tilde{g},n}$ and $\kb_{g,u,h,n}$.

Given a surface $X\in\mb_{\tilde{g},n}$ and an oriented symmetric pants decomposition of $X$ with coordinates $l_i,\theta_j$ denote by $U(X,\epsilon,\delta)$ the set of surfaces $X'$ with an oriented symmetric pants decomposition having coordinates $l_i',\theta_j'$ and admitting a strong deformation $\kappa\co X'\rightarrow X$ compatible with the pants decompositions such that $|l_i'-l_i|<\epsilon$ and $|\theta_j'-\theta_j|<\delta$. The collection $\{U(X,\epsilon,\delta) : X\in\mb_{\tilde{g},n}, \epsilon > 0,\delta > 0 \}$ then generates the topology on $\mb_{\tilde{g},n}$.

Set $z_j=l_j e^{i\theta_j}$ and let $\tilde{U}$ be the fixed locus under the symmetry of $X$, which, up to permutation of coordinates, consists of points of the form
\[(z_1,\bar{z}_1,z_2,\bar{z}_2,\ldots,\bar{z}_{d_1},x_1,\ldots,x_{d_2})\]
where $2d_1+d_2=3\tilde{g}-3+n$, $z_i\in \mathbb{C}$ and $x_i\in\mathbb{R}$. This is an open subset of $\mathbb{R}^d$ where $d=3\tilde{g}-3+n$. In particular the open sets $U(X,\epsilon,\delta)$ are homeomorphic to $\tilde{U}/\Gamma$ for an appropriate open subset $\tilde{U}$ of $\mathbb{R}^d$ and $\Gamma$ the automorphism group of $X$. Therefore the space $\mb_{\tilde{g},n}$ is an orbifold.

Similarly given a surface $K\in\kb_{g,u,h,n}$ and an oriented symmetric pants decomposition of $K_\mathbf{C}$ with coordinates $l_i,\theta_j$ denote by $U(K,\epsilon,\delta)$ the set of surfaces $K'$ with an oriented symmetric pants decomposition of $K_\mathbf{C}'$ having coordinates $l_i',\theta_j'$ and admitting a strong deformation $\kappa\co K_\mathbf{O}'\rightarrow K_\mathbf{O}$ compatible with the pants decompositions on the complex doubles such that $|l_i'-l_i|<\epsilon$ and $|\theta_j'-\theta_j|<\delta$. The collection $\{U(K,\epsilon,\delta) : K\in\kb_{g,u,h,n}, \epsilon > 0,\delta > 0 \}$ then generates the topology on $\kb_{g,u,h,n}$.

In this case the open sets are now homeomorphic to $\tilde{U}/\Gamma$ for some open neighbourhood $\tilde{U}$ of $\mathbb{R}^m_{\geq 0}\times \mathbb{R}^{d-m}$ and $\Gamma$ the automorphism group of $K$, where $d=6g+3h+3u+n-6$ and $m$ is the number of nodes of $K$. The value of $d$ is obtained by noting that the complex double of $K$ has topological type $(2g+h+u-1,0,0,n)$, so that the number of Fenchel--Nielsen coordinates of the double is $6(2g+h+u-1)-6+2n$ and again only half of these are independent. However, as discussed previously, the interior nodes of the complex double do not encode the dianalytic structure of the boundary nodes in the quotient, nor are the marked points oriented. As a result there is only one way to smooth a node in $K$ given the dianalytic structure so the coordinates corresponding to (smoothings of) the $m$ nodes lie in $\mathbb{R}^m_{\geq 0}$. In terms of the coordinates on the complex double this corresponds to the fact that a node in the complex double is fixed by the symmetry so as we smooth it the Fenchel--Nielsen coordinate corresponding to the gluing angle is either $0$ or $\pi$. However only one choice is possible if the quotient is to have the correct topological type and orientation of marked points. Therefore the space $\kb_{g,u,h,n}$ is an orbifold with corners. Furthermore an orbifold with corners is homotopy equivalent to its interior which in this case is $\ko_{g,u,h,n}$.

We can also carry out a similar construction for the spaces $\nb_{g,h,n}$. See also \cite{liu,costello1}.

Our discussion can be summarised in the following lemma:

\begin{lemma}
\Needspace*{4\baselineskip}\mbox{}
\begin{itemize}
\item $\nb_{g,h,n}$ is an orbifold with corners of dimension $6g-6+3h+n$. The interior is $\no_{g,h,n}$ and the inclusion $\no_{g,h,n}\hookrightarrow\nb_{g,h,n}$ is then a homotopy equivalence.
\item $\kb_{g,u,h,n}$ is an orbifold with corners of dimension $6g+3h+3u+n-6$. The interior is $\ko_{g,u,h,n}$ and the inclusion $\ko_{g,u,h,n}\hookrightarrow\kb_{g,u,h,n}$  is then a homotopy equivalence.
\item $\mb_{\tilde{g},n}$ is an orbifold of dimension $3\tilde{g}-3+n$.
\end{itemize}
\end{lemma}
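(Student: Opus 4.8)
The plan is to read all three assertions directly off the Fenchel--Nielsen coordinate charts constructed in the discussion above, treating the three moduli spaces in parallel and importing the case of $\nb_{g,h,n}$ essentially verbatim from Costello \cite{costello1,costello2} and Liu \cite{liu}. In each case one fixes an oriented (symmetric) pants decomposition of the relevant surface --- of the complex double $K_{\mathbf{C}}$ for $\kb$, of the symmetric Riemann surface itself for $\mb$, of the complex double for $\nb$ --- and uses the sets $U(\,\cdot\,,\epsilon,\delta)$ as charts. The first thing to verify is that these charts are well defined and cover the moduli space: this is exactly the fact, recalled above following Buser and Sepp\"al\"a \cite{buserseppala}, that every stable surface of the given type admits a symmetric pants decomposition and any two are linked by a strong deformation compatible with the orientation data, together with the fact that two surfaces with the same Fenchel--Nielsen coordinates relative to compatible decompositions are biholomorphic.

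Next I would record the dimension counts, all of which are carried out above. The complex double of a surface of type $(g,u,h,n)$ has type $(2g+h+u-1,0,0,n)$, hence $6(2g+h+u-1)-6+2n=2(6g+3h+3u+n-6)$ Fenchel--Nielsen coordinates of which exactly half survive after imposing the symmetry relations ($l_i=l_j$ and $\theta_i=2\pi-\theta_j$ for swapped curves, $\theta_i\in\{0,\pi\}$ for fixed curves), giving $d=6g+3h+3u+n-6$ for $\kb_{g,u,h,n}$; the same bookkeeping yields $6g-6+3h+n$ for $\nb_{g,h,n}$ and $d=3\tilde g-3+n$ for $\mb_{\tilde g,n}$. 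The decisive local-structure point is the shape of a chart near a nodal surface with $m$ nodes. For $\kb$ and $\nb$ a node is a symmetry-fixed node of the double, and because the dianalytic structure at a boundary node of a Klein surface pins down the smoothing uniquely --- equivalently, only one of the two admissible gluing angles at the fixed curve of the double gives a quotient of the correct topological type and orientation of marked points --- the corresponding coordinate ranges over $\mathbb{R}_{\geq 0}$ rather than $\mathbb{R}$; so the chart is homeomorphic to $\tilde U/\Gamma$ with $\tilde U$ an open neighbourhood of $\mathbb{R}^m_{\geq 0}\times\mathbb{R}^{d-m}$ and $\Gamma$ the finite automorphism group, i.e.\ an orbifold with corners. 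For $\mb_{\tilde g,n}$ a node is an \emph{interior} node of the symmetric surface, fixed by the symmetry, and such a node can be smoothed with an arbitrary gluing angle, so the pair $z_j=l_je^{i\theta_j}$ sweeps out a full neighbourhood of $0\in\mathbb{C}$; here $\tilde U$ is (up to a permutation of coordinates) an open subset of $\{(z_1,\bar z_1,\dots,\bar z_{d_1},x_1,\dots,x_{d_2}):2d_1+d_2=d\}\cong\mathbb{R}^d$ with no boundary, so $\mb_{\tilde g,n}$ is an honest orbifold of dimension $d=3\tilde g-3+n$ and no homotopy-equivalence statement is asserted (nor expected, since this compactification behaves quite differently, as noted above).

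For the homotopy-equivalence claims I would invoke the standard fact that an orbifold with corners deformation retracts onto its interior: the boundary strata, indexed by which subsets of the $m$ nodes are present, carry compatible collar neighbourhoods, and pushing in along these collars gives a deformation retraction --- equivariant for the $S_n$ relabelling and the $\mathbb{Z}_2^{\times n}$ flipping of marked-point orientations --- onto the open stratum, which by construction is precisely the locus of non-singular surfaces, namely $\ko_{g,u,h,n}$ (respectively $\no_{g,h,n}$). The step I expect to be the genuine work is checking that the charts $U(\,\cdot\,,\epsilon,\delta)$ really patch into a bona fide orbifold-with-corners atlas: that the transition maps arising from a change of symmetric pants decomposition or of marking are real-analytic and equivariant for the finite groups $\Gamma$, that the $m$ half-space directions attached to the nodes are independent and exactly $m$ in number, and above all that the node-smoothing direction is a half-space direction rather than a full direction precisely in the dianalytic setting --- this being the one place where $\kb$ genuinely departs from a Deligne--Mumford-type compactification, forced by the phenomenon of the strangulated M\"obius strip discussed earlier. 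Since everything here runs parallel to Liu \cite{liu} and Costello \cite{costello1,costello2} after passing to the complex or orienting double, the task is really to transcribe that argument carefully rather than to prove anything new.
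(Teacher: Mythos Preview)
Your proposal is essentially the paper's own argument: the lemma is stated as a summary of the preceding discussion, and you have correctly reconstructed that discussion (Fenchel--Nielsen charts via symmetric pants decompositions of the double, the dimension counts, the identification of which coordinates become half-space directions at a node, and the standard collar retraction of an orbifold with corners onto its interior).

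One point to correct in your account of the $\mb_{\tilde g,n}$ case. You write that an interior node fixed by the symmetry ``can be smoothed with an arbitrary gluing angle, so the pair $z_j=l_je^{i\theta_j}$ sweeps out a full neighbourhood of $0\in\mathbb{C}$.'' This is not right: the decomposing curve arising from smoothing such a node is itself mapped to itself by $\sigma$, so its twist parameter satisfies $\theta_j\in\{0,\pi\}$, and the relevant coordinate is one of the real $x_i$'s, not one of the complex $z_j$'s. (Admissibility forces every node of a surface in $\mb$ to be fixed by $\sigma$, so there are no conjugate pairs of nodes contributing genuine complex smoothing parameters.) The reason $\mb_{\tilde g,n}$ has no corners is not that the angle is arbitrary, but that \emph{both} discrete values $0$ and $\pi$ are permitted: they yield quotients of different topological type, yet both lie in $\mb_{\tilde g,n}$ since only the genus $\tilde g$ of the double is fixed. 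Thus $x_i=\pm l_i$ ranges over a full neighbourhood of $0$ in $\mathbb{R}$. By contrast, in $\kb_{g,u,h,n}$ the quadruple $(g,u,h,n)$ and the orientations at marked points are fixed, so only one of the two angles is allowed and the coordinate is confined to $\mathbb{R}_{\geq 0}$, producing the corner. Your conclusion is correct; only the mechanism needs this adjustment.
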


\autoref{rem:riemklein} can be taken further. Given a Riemann surface with $n$ marked points together with a colouring of the marked points by $\{0,1\}$ we can map it to a Klein surface with $n$ oriented marked points in $\dnklein$ by choosing the canonical orientation about points coloured by $0$ and the opposite orientation otherwise. Equivalently we choose $q_i$ in the component of the orienting double that maps analytically under the quotient map when $i$ is such that $p_i$ is coloured by $0$ and in the component which maps anti-analytically otherwise.

Two isomorphism classes of Riemann surfaces with $n$ coloured marked points map to the same class of Klein surface precisely when there is an antiholomorphic map between them that reverses the colourings of all the marked points and this map is therefore $2$--to--$1$ for $n>0$. For $n=0$ this map is then $2$--to--$1$ on isomorphism classes except when a Klein surface has isomorphic underlying analytic structures (or equivalently when considering a Riemann surface as a Klein surface its automorphism group is no larger, or equivalently the Riemann surface admits an antiholomorphic automorphism).

In particular for $n>0$ if we restrict to Riemann surfaces where the first marked point is coloured by $0$ then this map is injective.

The following lemma follows from this discussion.

\begin{lemma}\label{lem:riemklein}
There is an isomorphism
\[\left (\bigoplus^{2^{n}}\nb_{g,h,n}\right )\biggr/\sim\quad\longrightarrow\quad \kb_{g,0,h,n}\]
where $\sim$ identifies Riemann surfaces with coloured markings that give rise to the same Klein surfaces with oriented markings. For $n>0$ the left hand side is isomorphic to $\bigoplus^{2^{n-1}}\nb_{g,h,n}$.
\end{lemma}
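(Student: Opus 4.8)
The plan is to write the map down explicitly using the colouring construction described just above the lemma, check it is a continuous bijection after passing to the quotient, and identify the equivalence relation $\sim$ precisely enough to read off the $n>0$ statement. First I would fix a map $\Phi\colon\bigoplus^{2^{n}}\nb_{g,h,n}\to\kb_{g,0,h,n}$ by labelling the $2^{n}$ summands by colourings $c\colon\{1,\dots,n\}\to\{0,1\}$: on the $c$-th summand, send (the isomorphism class of) a stable bordered Riemann surface $X$ with $n$ boundary marked points and boundary nodes to the Klein surface obtained by passing from its analytic to its dianalytic structure and equipping $p_i$ with the canonical orientation germ when $c(i)=0$ and the opposite germ when $c(i)=1$, exactly as in the extension of Remark~\ref{rem:riemklein}. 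Since $X$ has topological type $(g,0,h,n)$ and the construction introduces no crosscaps, $\Phi$ indeed lands in $\kb_{g,0,h,n}$. Continuity in both directions follows from the Fenchel--Nielsen description of the topologies on these orbifolds: by Lemma~\ref{lem:odouble} the orienting double of $\Phi(X,c)$ is the disjoint union $X\sqcup\bar X$, so an oriented symmetric pants decomposition of the double restricts to pants decompositions of the two copies with matching coordinates.

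Next I would verify surjectivity: given $K\in\kb_{g,0,h,n}$, it is orientable because $u=0$, so by Lemma~\ref{lem:odouble} its orienting double is $X\sqcup\bar X$ with $X$ a bordered Riemann surface; the oriented marked points of $K$ record, for each $i$, which of the two preimages of $p_i$ lies in $X$, and this choice is a colouring $c$ with $\Phi(X,c)=K$. For the fibres, I would use that any isomorphism in $\dnklein$ between the Klein surfaces attached to $(X,c)$ and $(X',c')$ is dianalytic, hence lifts to the orienting doubles and is either analytic everywhere, forcing $X\cong X'$ and $c=c'$, or anti-analytic everywhere, forcing $X'\cong\bar X$ and $c'=1-c$ (the canonical orientation is reversed globally, so the orientation germs at the marked points agree only after flipping every colour). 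Thus the fibres of $\Phi$ are exactly the $\sim$-classes, $\sim$ being precisely the involution $(X,c)\leftrightarrow(\bar X,1-c)$ on $\bigoplus^{2^{n}}\nb_{g,h,n}$, and $\Phi$ descends to a continuous bijection which the coordinate description makes a homeomorphism (an isomorphism of orbifolds).

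Finally, for $n>0$ the involution $(X,c)\leftrightarrow(\bar X,1-c)$ is fixed-point free: a fixed point would require an anti-analytic automorphism of $X$ fixing every marked point together with $c(i)=1-c(i)$ for all $i$, which is impossible. Hence the $2^{n-1}$ summands with $c(1)=0$ form a complete set of representatives for $\sim$, giving $\bigl(\bigoplus^{2^{n}}\nb_{g,h,n}\bigr)/\sim\ \cong\ \bigoplus^{2^{n-1}}\nb_{g,h,n}$, which combined with the previous paragraph yields the isomorphism with $\kb_{g,0,h,n}$.

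I expect the only real work, and the main obstacle, to be pinning down $\sim$ exactly: one must check that a dianalytic isomorphism of oriented-marked Klein surfaces is forced to be globally analytic or globally anti-analytic and track how each alternative acts on the orientation germs at the marked points (the analytic case preserving $c$, the anti-analytic case sending $c$ to $1-c$), and then confirm that these identifications are continuous, not merely set-theoretic, for the Fenchel--Nielsen topologies — everything else is bookkeeping already carried out in the discussion preceding the lemma.
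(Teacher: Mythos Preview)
Your proposal is correct and follows essentially the same approach as the paper, which in fact gives no separate proof but simply says the lemma ``follows from this discussion'' (the paragraphs immediately preceding it, describing the colouring construction, the $2$--to--$1$ behaviour via the antiholomorphic identification $(X,c)\leftrightarrow(\bar X,1-c)$, and the observation that fixing $c(1)=0$ makes the map injective for $n>0$). Your write-up is a faithful and somewhat more careful expansion of that discussion; the one point worth making explicit, which you do correctly flag, is that a dianalytic isomorphism is globally analytic or globally anti-analytic even on a connected \emph{nodal} surface, which follows from the paper's remark that dianalytic maps on a connected neighbourhood of a boundary node are analytic or anti-analytic on both branches.
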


Let $D_{g,u,h,n}\subset\kb_{g,u,h,n}$ be the subspace consisting of those Klein surfaces whose irreducible components are all discs. Let $D_{g,h,n}\subset\nb_{g,h,n}$ be the corresponding subspace of bordered Riemann surfaces. Let $\dr_{\tilde{g},n}\subset\mb_{\tilde{g},n}$ be the subspace consisting of those admissible Riemann surfaces whose irreducible components are all spheres. Note that when we consider $\mb_{\tilde{g},n}$ as a moduli space of na\"ive nodal Klein surfaces then $\dr_{\tilde{g},n}$ is the subspace consisting of those Klein surfaces whose irreducible components are all discs.

\subsection{The open KTCFT operad and related operads}

We recall (see \cite{costello1}) that the spaces $\nb_{g,h,n}$ form a modular operad $\nb$ controlling open topological conformal field theory (TCFT). The spaces $D_{g,h,n}$ form a suboperad. Further it was shown by Costello \cite{costello1,costello2} that these spaces are compact orbispaces and admit a decomposition into orbi-cells and if $n>0$ then $D_{g,h,n}$ is an ordinary space instead of an orbispace so we obtain a cell decomposition. Further these orbi-cells are labelled by reduced ribbon graphs.

The collection of spaces $\kb_{g,u,h,n}$ form a topological modular operad $\kb$ by gluing Klein surfaces with oriented marked points such that the orientations are compatible. We can describe the gluing explicitly via the orienting double. Given dianalytic symmetric Riemann surfaces $(X,\sigma,\mathbf{p})$ and $(X',\sigma',\mathbf{p'})$ with $n$ and $m$ marked points respectively we can define an operation gluing along marked points $p_i$ and $q_j$ as follows: we glue the underlying Riemann surfaces at these points and we also glue the points $\sigma(p_i)$ and $\sigma'(q_i)$. Clearly we can use $\sigma$ and $\sigma'$ to define an antiholomorphic involution on the resulting surface which will clearly be dianalytic and will have $n+m-2$ marked points. We also note that the topological type of the resulting surface is the sum of the topological types of $X$ and $X'$. Similarly we can define contractions/self gluings of dianalytic symmetric Riemann surfaces in this way, in which case the resulting topological type either increases the number of boundary components or the number of crosscaps by $1$.

Therefore the space $\kb((\tilde{g},n))$ is the disjoint union of the spaces $\kb_{g,u,h,n}$ with $\tilde{g}=2g+h+u-1$. The group $S_n$ acts by reordering the $n$--tuple of marked points. Composition and contraction is given by gluing the marked point as described above. This gives us the modular operad controlling open Klein topological conformal\footnote{The use of the word `conformal' here is potentially confusing since dianalytic maps correspond to maps preserving angles but not necessarily oriented angles. A conformal map in the presence of the word `Klein' should therefore be understood in this sense.} field theory (KTCFT). The spaces $D_{g,u,h,n}$ form a suboperad which we denote $D$.

We will think of these two operads as extended modular operads by setting $D((0,2))=\kb((0,2))$ to be the discrete group $\mathbb{Z}_2$ which acts on Klein surfaces by switching the orientation of marked points.

Similarly by gluing admissible symmetric Riemann surfaces without boundary at marked points in the natural way the spaces $\mb_{\tilde{g},n}$ form a modular operad $\mb$ and the spaces $\dr_{\tilde{g},n}$ form a suboperad which we denote $\dr$.

\begin{remark}
The gluings of the operad $\mb$ when thought of as gluings of admissible symmetric Riemann surfaces are `closed string' gluings. In this way the operad $\mb$ is closer in spirit to the Deligne--Mumford operad (see, for example, Getzler and Kapranov \cite{getzlerkapranov}).
\end{remark}

\begin{proposition}\label{prop:celldecomp}
The spaces $D_{g,u,h,n}$ admit a decomposition into orbi-cells labelled by reduced M\"obius graphs.
\end{proposition}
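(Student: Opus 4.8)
The plan is to transfer Costello's orbi-cell decomposition of $D_{g,h,n}$ by reduced ribbon graphs \cite{costello1,costello2} to the Klein case, using the orienting double of Proposition \ref{prop:odoubleequiv2} to import the analytic input and the combinatorics of the previous section to pin down the index set.

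\emph{Step 1 (the dual M\"obius graph).} Given a stable nodal Klein surface $K$ in $\dnklein$ all of whose irreducible components are discs, with $n$ oriented marked points on the boundary, let $G(K)$ be the graph with one vertex per irreducible component, one internal edge per boundary node (a self-node giving a loop) and one leg per marked point. Choosing an orientation of each disc $v$ endows $\flags(v)$ with the cyclic order of the boundary circle and produces a colouring $c\co\halfedges(G(K))\to\{0,1\}$: a half-edge coming from a node is coloured $0$ or $1$ according as the gluing at that node respects or reverses the chosen orientations of its two ends, and a leg is coloured $0$ or $1$ according as the orientation at the corresponding marked point agrees with the chosen orientation of its vertex. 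Reversing the chosen orientation at $v$ is precisely reflection at $v$, and interchanging the two colours along an internal edge leaves $K$ unchanged, so $G(K)$ is a well-defined M\"obius graph (modulo the equivalence of Remark \ref{rem:reducedmobgraph}) depending only on the isomorphism class of $K$. Stability says each disc carries at least three special boundary points, i.e.\ $G(K)$ is reduced, and the count behind Figure \ref{fig:basicsurfaces} shows $G(K)$ has $n$ legs and genus $2g+h+u-1$.

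\emph{Step 2 (the cells).} For a reduced M\"obius graph $G$ of genus $2g+h+u-1$ with $n$ legs let $\mathcal{C}_G\subset D_{g,u,h,n}$ be the locus of $K$ with $G(K)\cong G$. Pulling apart the nodes exhibits $K$ as a disjoint union of discs $K_v$, one for each vertex, where $K_v$ carries $n(v)$ labelled marked boundary points in the cyclic order dictated by $G$, so that via its oriented marked points $K_v$ is canonically a bordered Riemann surface; conversely $K$ is recovered from the $K_v$ and the gluing pattern recorded by $G$. The moduli space $\mathcal{M}_v$ of such a (necessarily smooth) disc with $m\geq 3$ labelled boundary points in a fixed cyclic order is an open cell of dimension $m-3$ — a top-dimensional cell of Costello's decomposition of $D_{0,1,m}$ — the relevant input being that the dianalytic automorphism group of the disc is $3$--dimensional. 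Hence $\mathcal{C}_G$ is an orbi-cell, identified with $\bigl(\prod_{v\in\vertices(G)}\mathcal{M}_v\bigr)/\mathrm{Aut}(G)$ with each $\mathcal{M}_v$ as above; as in the Riemann case \cite{costello1,costello2} it is an honest cell when $n>0$.

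\emph{Step 3 (assembling the complex), and the main difficulty.} The sets $\mathcal{C}_G$ are pairwise disjoint since $G(K)$ is an isomorphism invariant, and they cover $D_{g,u,h,n}$ by the classification of nodal disc configurations. A disc with at least four special boundary points degenerates by pinching into two discs meeting at a new node, and every degeneration within $D_{g,u,h,n}$ arises this way; consequently $\mathcal{C}_{G'}\subseteq\overline{\mathcal{C}_G}$ exactly when $G$ is obtained from $G'$ by contracting internal edges, which is the incidence poset of an orbi-cell complex indexed by reduced M\"obius graphs. The one non-formal point is that this stratification is genuinely an orbi-cell complex — that the attaching maps are continuous and the closure of each cell is the expected union of lower cells — and this is exactly the content of Costello's argument \cite{costello2} for $D_{g,h,n}$, which applies to the orienting doubles of surfaces in $D_{g,u,h,n}$ because the antiholomorphic involution is compatible with the symmetric pants decompositions, Fenchel--Nielsen coordinates and the spine construction used there. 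I expect this transfer of Costello's analysis through the double to be the main obstacle to a fully detailed proof, and, as throughout this section, it is handled by reference to \cite{costello2}.
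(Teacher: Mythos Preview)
Your proposal is correct and follows essentially the same approach as the paper: both associate to each surface a reduced M\"obius graph by choosing an orientation on each irreducible disc (the paper phrases this as choosing a preimage in the orienting double, which is equivalent), check well-definedness via the reflection relation, and identify each stratum as $\bigl(\prod_v X(v)\bigr)/\operatorname{Aut}(G)$ with $X(v)$ a cell in the moduli of marked discs. Your Step~3 on attaching maps goes beyond what the paper proves in this proposition---the paper records the incidence relation separately in Remark~\ref{rem:cellattach} and does not argue the CW structure here---so you could safely drop that step or relegate it to a remark.
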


\begin{proof}
This is not hard. It is intuitively obvious how we can label a surface by a M\"obius graph but in order to specify the colouring of the edges we need to understand the dianalytic structure about the nodes. It is easiest (although unenlightening) to do this via the orienting double. Let $\mob\Gamma_{g,u,h,n}$ denote the set of reduced M\"obius graphs of topological type $(g,u,h)$ with $n$ legs. Let $(K,\mathbf{p},\mathbf{q})\in D_{g,u,h,n}$. We associate a graph $\gamma(K)\in\mob\Gamma_{g,u,h,n}$ to $K$ as follows: There is one vertex for each irreducible component of $K$, an edge for each node and a leg for each marked point. This yields a graph. We need to specify a ribbon structure and a colouring of the half edges and verify we can do this in a well defined manner. We consider the orienting double $(K_\mathbf{O},f,\sigma)$ and for each irreducible component $A$ of $K$ we choose an irreducible component $\hat{A}$ of $f^{-1}(A)\subset K_\mathbf{O}$. Since $\hat{A}$ is an oriented disc this gives a natural cyclic order on the half edges of $\gamma(K)$. We colour the leg corresponding to $p_i$ by $0$ if $q_i\in\hat{A}$ and by $1$ otherwise. A node $x$ of $K$ lies on the boundary of either $1$ or $2$ irreducible components. If it lies on only $1$ component, $B$ say, then we colour the edge associated to it by $0$ if a preimage of $x$ in $K_\mathbf{O}$ lies only on $\hat{B}$, else we colour the half edges by different colours (by \autoref{rem:reducedmobgraph} it does not matter how we do this). If $x$ lies on the boundary of both $B$ and $C$, then we colour the edge associated to it by $0$ if $\hat{B}$ and $\hat{C}$ intersect at a preimage of $x$ and by different colours otherwise.

This yields a reduced M\"obius graph. We must show it is well defined since we made a choice of irreducible components in $K_\mathbf{O}$. Given an irreducible component $A$ of $K$, if we had chosen the other preimage of $A$ then the cyclic ordering at the corresponding vertex would be reversed and the colouring also reversed. Thus the resulting M\"obius graphs would be isomorphic.

We now show that for any graph $G\in\mob\Gamma_{g,u,h,n}$ the space of surfaces $K\in D_{g,u,h,n}$ with $\gamma(K)=G$ is an orbi-cell. This follows from the topology of the moduli space of discs: let $D$ be a disc with an analytic structure. Then $D$ is holomorphic to the unit disc in the complex plane which has automorphism group $PSL_2(\mathbb{R})$ and so the space of $n\geq 3$ marked points on the unit disc is the configuration space of marked points on $S^1$ modulo $PSL_2(\mathbb{R})$. Further, automorphisms of the unit disc preserve the cyclic ordering of marked points and so this space decomposes into cells labelled by ribbon corollas. As noted in \autoref{lem:riemklein} the moduli space of marked Klein discs can be identified with the moduli space of coloured marked unit discs modulo the action of the anti-analytic map reversing the cyclic ordering of the marked points. The space of coloured marked unit discs decomposes into cells and the action reversing the cyclic ordering freely maps cells to cells and so we have a cell decomposition of the moduli space of marked Klein discs. Clearly each cell is labelled by a different M\"obius corolla. Therefore to each vertex $v$ of a M\"obius graph we associate a cell $X(v)$. Then we can let $X(G)=\prod_v X(v)$. We can identify the orbispace of surfaces with $\gamma(K)=G$ as $X(G)/\operatorname{Aut}(G)$, which is an orbi-cell.
\end{proof}

\begin{remark}\label{rem:cellattach}
The orbi-cell labelled by a graph $G$ is attached to the the cells labelled by the graphs given by all ways of expanding the vertices of $G$ of valence greater than $3$, since two marked points meeting corresponds to bubbling off a disc. This should of course remind us of the differential of the operad $\moddmass$.
\end{remark}

\begin{lemma}
A stable Klein surface with $n>0$ oriented marked points has no non-trivial automorphisms.
\end{lemma}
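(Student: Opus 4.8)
The plan is to reduce the statement about Klein surfaces to the corresponding well-known fact about (bordered) Riemann surfaces via the orienting double, using the equivalence of categories $\dnklein\rightarrow\dnsymriem$ of Proposition \ref{prop:odoubleequiv2}, together with Lemma \ref{lem:odouble} which guarantees the orienting double and its deck involution $\sigma$ are essentially unique. An automorphism $\phi$ of a stable Klein surface $(K,\mathbf{p},\mathbf{q})$ with $n>0$ oriented marked points lifts, by the universal property of $K_\mathbf{O}$, to a unique analytic automorphism $\tilde\phi$ of $K_\mathbf{O}$ commuting with $\sigma$ and fixing (setwise) the marked points; crucially, because the marked points are \emph{oriented}, the points $q_i$ give a canonical choice of $n$-tuple in $\partial K_\mathbf{O}$ and $\tilde\phi$ must fix each $q_i$ individually rather than permuting $\{q_i,\sigma(q_i)\}$. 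So it suffices to show: a stable object of $\dnsymriem$ with $n>0$ marked points $q_1,\dots,q_n$ on one ``side'' (and their $\sigma$-images) has no non-trivial automorphism commuting with $\sigma$ and fixing each $q_i$.

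The key steps, in order, are as follows. First, handle the non-singular case. Here $K_\mathbf{O}$ is a (possibly disconnected) bordered Riemann surface; an automorphism $\tilde\phi$ fixing a boundary marked point $q_1$ and commuting with $\sigma$ either is already an automorphism of a single bordered component fixing $q_1$, or swaps two conjugate components. A bordered Riemann surface of hyperbolic type has finite automorphism group, and fixing \emph{one} boundary point on a hyperbolic bordered surface whose double has genus $\geq 1$ already forces the identity — this is the standard rigidity fact, provable by passing to $(K_\mathbf{O})_\mathbf{C}$ (the complex double of $K_\mathbf{O}$) and noting a hyperbolic isometry of a surface without boundary that fixes a point and a tangent direction at that point is trivial; the orientation datum at $q_1$ supplies exactly that tangent direction after doubling. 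The non-hyperbolic exceptions (disc, annulus, M\"obius strip) are precisely the unstable ones, so stability rules them out — but one must double-check the cases where $n>0$ pins down a disc: a disc with $n\ge 3$ marked boundary points has trivial automorphism group, while $n=1,2$ is unstable, so stability again suffices.

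Second, reduce the nodal case to the non-singular case via normalisation. As recalled just before Section 4.5, a stable nodal Klein surface is stable iff each component of its normalisation is, where each node contributes two extra boundary marked points. An automorphism $\phi$ of $K$ permutes irreducible components and nodes; on the normalisation it induces an automorphism of a disjoint union of stable bordered surfaces, each carrying at least the marked points coming from nodes and — since $n>0$ and $K$ is connected — one can argue inductively outward from the component containing a marked $p_i$ that every component must be fixed and the induced automorphism on each is trivial by the non-singular case. The main subtlety, and the step I expect to be the real obstacle, is the bookkeeping of the ``oriented'' marked-point data through normalisation and the orienting double simultaneously: one must be careful that the lift $\tilde\phi$ genuinely fixes each $q_i$ (not just the pair) and that the two half-edges at each node — whose colouring encodes the dianalytic/twist data — are respected, so that no non-trivial automorphism hides in a reflection at a vertex; invoking Remark \ref{rem:reducedmobgraph} and the explicit description of $\gamma(K)$ from Proposition \ref{prop:celldecomp} keeps this under control. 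Once every component is fixed and every restriction is the identity, the automorphism of $K$ is the identity.
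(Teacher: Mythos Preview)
Your approach is essentially the same as the paper's: reduce to the orienting double via the equivalence of Proposition~\ref{prop:odoubleequiv2}, observe that since $n>0$ the lift $\tilde\phi$ must fix each $q_i$ (hence preserve components when $K_\mathbf{O}$ is disconnected), and then invoke the corresponding fact for stable bordered Riemann surfaces. The paper's proof is three lines because it simply cites Costello \cite[Lemma~3.0.11]{costello2} for that last fact, whereas you sketch a proof of it from scratch via hyperbolic rigidity and normalisation.

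Two minor points on your sketch. First, once $\tilde\phi$ fixes $q_1$ it cannot swap the two components of a disconnected $K_\mathbf{O}$, so that alternative never arises. Second, the phrase ``the orientation datum at $q_1$ supplies exactly that tangent direction after doubling'' conflates two things: the oriented-marking datum is a local orientation of the surface near $p_1$, and its role is exhausted in the reduction step (it forces $\tilde\phi(q_i)=q_i$ rather than $\sigma(q_i)$). On the Riemann side the tangent direction you want comes for free from the boundary --- an analytic automorphism of a bordered surface preserves $\partial K_\mathbf{O}$ and is orientation-preserving, so at a fixed boundary point it automatically fixes the boundary tangent direction; after doubling this gives a hyperbolic isometry fixing a point and a tangent vector, hence the identity. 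Your nodal reduction via normalisation is correct but more elaborate than needed, since Costello's lemma already covers the nodal bordered case directly.
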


\begin{proof}
We must show that the orienting double has no non-trivial automorphisms. If the orienting double is disconnected then an automorphism would necessarily restrict to an automorphism of one of the connected components. The result is true for stable bordered Riemann surfaces \cite[Lemma 3.0.11]{costello2} so the orienting double has no non-trivial automorphisms.
\end{proof}

\begin{corollary}
If $n>0$ then $D_{g,u,h,n}$ is an ordinary space and decomposes into a cell complex.
\qed
\end{corollary}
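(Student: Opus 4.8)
The plan is to read this off from the preceding lemma together with Proposition~\ref{prop:celldecomp}. Recall from the construction of the topology on $\kb_{g,u,h,n}$ via (symmetric) Fenchel--Nielsen coordinates that a neighbourhood of the point represented by a surface $K$ is modelled on $\tilde U/\Gamma$, where $\tilde U$ is an open subset of a half-space of the appropriate dimension and $\Gamma=\operatorname{Aut}(K)$; intersecting these local models with $D_{g,u,h,n}$ describes a neighbourhood of the corresponding point in $D_{g,u,h,n}$. So the only source of orbifold (as opposed to ordinary-space) behaviour is the automorphism groups of the surfaces involved.

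First I would invoke the preceding lemma: since $n>0$, every stable Klein surface with $n$ oriented marked points has trivial automorphism group. Hence every isotropy group $\Gamma$ occurring above is trivial, the local models are honest coordinate charts, and $D_{g,u,h,n}$ is an ordinary topological space.

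Second, I would upgrade the orbi-cell decomposition of Proposition~\ref{prop:celldecomp} to an ordinary cell decomposition. There the orbi-cell indexed by a reduced M\"obius graph $G$ is $X(G)/\operatorname{Aut}(G)$, with $X(G)=\prod_{v}X(v)$ a product of open cells, one for each vertex. A point of $X(G)$ fixed by a non-trivial element of $\operatorname{Aut}(G)$ would yield a stable Klein surface with $n>0$ oriented marked points carrying a non-trivial automorphism, contradicting the preceding lemma; so $\operatorname{Aut}(G)$ acts freely on the contractible space $X(G)$, and is therefore trivial (a finite group acting freely on a finite-dimensional contractible space must be trivial). Thus $X(G)/\operatorname{Aut}(G)=X(G)$ is a genuine cell, and the decomposition of Proposition~\ref{prop:celldecomp} is an honest cell complex, with attaching maps as spelled out in Remark~\ref{rem:cellattach}.

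The argument is essentially immediate; the only point requiring a moment's thought is the last one, namely that the quotient orbi-cells are genuine cells, and this is exactly where the hypothesis $n>0$ re-enters through the preceding lemma.
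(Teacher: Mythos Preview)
Your argument is correct and matches the paper's intent: the corollary is stated with a bare \qed, so the paper regards it as immediate from the preceding lemma together with Proposition~\ref{prop:celldecomp}, and you have simply written out what ``immediate'' means here. The only place you go beyond the paper is in explicitly deducing that $\operatorname{Aut}(G)$ itself is trivial (via the free action of a finite group on the contractible cell $X(G)$), rather than merely that each orbi-cell is an honest cell; this is a clean way to close the argument and is implicit in the paper's one-line treatment.
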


\begin{proposition}\label{prop:dianalyticgraphs}
The spaces $\dr_{\tilde{g},n}$ admit a decomposition into orbi-cells labelled by a certain type of reduced graph (which we call a dianalytic ribbon graph).
\end{proposition}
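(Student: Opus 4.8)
The plan is to follow the proof of Proposition~\ref{prop:celldecomp} essentially word for word, the one structural change being that a na\"ive boundary node carries \emph{no} dianalytic data, so that the combinatorial datum recording a surface is a ribbon graph in which the cyclic ordering at each vertex is only well defined up to reversal and whose half edges carry no colouring. This is what a \emph{dianalytic ribbon graph} should be: it is exactly a M\"obius graph with the colouring erased, which is the graph-level shadow of the passage $\moddmass\to\moddmass/(a=1)$. First I would use Proposition~\ref{prop:cdoubleequiv2} to regard $\dr_{\tilde{g},n}$ as the moduli space of stable na\"ive nodal Klein surfaces all of whose irreducible components are discs. To such a surface $(K,\mathbf{p})$ one associates $\gamma(K)$: one vertex per component, one edge per node, one leg per marked point, with the legs labelled by $\{1,\dots,n\}$. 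Each component, being conformally the unit disc, endows the marked points and the branches of the nodes on its boundary with a cyclic ordering, but only up to reversal since a Klein disc has no preferred orientation; and a na\"ive node contributes nothing further, being merely an unoriented identification of two boundary points. Thus $\gamma(K)$ is a reduced dianalytic ribbon graph with $n$ legs, and the only ambiguity --- which orientation one uses to present the cyclic order at a given vertex --- changes $\gamma(K)$ only within its isomorphism class, as in Remark~\ref{rem:reducedmobgraph} with colours forgotten.

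Next I would identify the piece of $\dr_{\tilde{g},n}$ supported on a fixed combinatorial type, exactly as in Proposition~\ref{prop:celldecomp}. The moduli space of a single Klein disc with $N\ge 3$ marked points on its boundary is, via the complex double, the moduli of $(\mathbb{P}^1,\sigma,\mathbf{p})$ with $\sigma$ antiholomorphic and its fixed circle containing $\mathbf{p}$; after normalising this is the configuration space of $N$ distinct labelled points on $\mathbb{RP}^1$ modulo $PGL_2(\mathbb{R})$, which is a disjoint union of $(N-1)!/2$ open cells, each of dimension $N-3$, indexed by the cyclic orderings of $\{1,\dots,N\}$ up to reversal --- that is, by the possible \emph{dianalytic corollas} on $N$ half edges. (This is the $a=1$ analogue of the cell decomposition of the marked Klein disc used in Proposition~\ref{prop:celldecomp}, and is proved the same way.) To a dianalytic ribbon graph $G$ one then attaches $X(G)=\bigl(\prod_{v}X(v)\bigr)/\operatorname{Aut}(G)$, where $X(v)$ is the cell of the dianalytic corolla at $v$; the locus of those $K\in\dr_{\tilde{g},n}$ with $\gamma(K)=G$ is $X(G)$, an orbi-cell, and these exhaust $\dr_{\tilde{g},n}$ since every stable disc-component surface has a combinatorial type. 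The attaching maps are the expansions of vertices of valence $>3$ --- two marked points colliding bubbles off a disc --- cf.\ Remark~\ref{rem:cellattach}, matching the expanding differential on $\moddmass/(a=1)$.

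The point requiring care is, as in Proposition~\ref{prop:celldecomp}, well-definedness: that $K\mapsto\gamma(K)$ is independent of the presentations chosen at the vertices and that each $X(G)$ really is an orbi-cell. Since na\"ive nodes carry no structure this is if anything easier than the M\"obius graph case; the only subtlety worth flagging is that, in contrast with $D_{g,u,h,n}$ for $n>0$, a disc-component surface here may still possess a nontrivial automorphism descending from the involution on its complex double, so the pieces are genuinely orbi-cells and the quotient by $\operatorname{Aut}(G)$ cannot be dropped. I do not anticipate any real obstacle; the proof is a routine transcription of that of Proposition~\ref{prop:celldecomp} with all mention of colours suppressed.
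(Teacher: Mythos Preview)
Your proposal is correct and follows essentially the same approach as the paper's own proof: define dianalytic ribbon graphs as ribbon graphs with the cyclic ordering at each vertex taken only up to reversal, associate to each surface its incidence graph, and identify the stratum over a fixed graph $G$ as $\bigl(\prod_v X(v)\bigr)/\operatorname{Aut}(G)$ using the cell decomposition of the moduli of marked dianalytic discs. Your added details (the complex double viewpoint, the explicit count $(N-1)!/2$ of cells via $PGL_2(\mathbb{R})$ acting on configurations in $\mathbb{RP}^1$, and the remark about genuine orbi-cells even for $n>0$) are all consistent with and slightly more explicit than the paper's argument.
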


\begin{proof}
Each orbi-cell will be labelled by a (reduced) ribbon graph where two ribbon graphs are considered equivalent if there is an isomorphism of the underlying graphs that at each vertex either preserves or reverses the cyclic ordering (note that this definition can be thought of as M\"obius graphs without a colouring). We will call such graphs up to this equivalence \emph{dianalytic ribbon graphs}. Given a surface $K\in \dr_{\tilde{g},n}$ we first choose an orientation for each irreducible disc. We can then associate a ribbon graph to it where there is a vertex for each irreducible component of $K$, an edge for each node and a leg for each marked point. Again this is well defined since choosing a different orientation of an irreducible component reverses the cyclic ordering at the vertex associated to that component.

It remains to show that the space of surfaces corresponding to a given graph $G$ is an orbi-cell. This follows from the fact that the moduli space of stable dianalytic discs with $n$ marked points can be identified with the moduli space of marked oriented discs modulo the action of the map reversing the orientation. This action freely maps cells to cells (since there are at least $3$ marked points on a disc so reversing the orientation gives a different cell) so we get a cell decomposition of the moduli space of stable dianalytic discs with cells labelled by dianalytic ribbon corollas. We can associate to each vertex $v$ of $G$ a cell $X(v)$ and once again the orbi-cell of surfaces corresponding to the graph is $X(G)/\operatorname{Aut}(G)$ where $X(G)=\prod_vX(v)$.
\end{proof}

\begin{remark}\label{rem:notgrpquotient}
Note that although each orbi-cell of $\dr_{\tilde{g},n}$ is the quotient of orbi-cells in $\coprod D_{g,u,h,n}$ (where the disjoint union is as usual taken over surfaces such that $2g+u+h-1=\tilde{g}$) by the action of a finite group switching the colourings of half edges, this does not extend to a global action and so the space $\dr_{\tilde{g},n}$ is not obtained as a quotient of $\coprod D_{g,u,h,n}$ by some group action, unlike the space of smooth surfaces $\mo_{\tilde{g},n}$ which, as mentioned, is obtained as the quotient of $\coprod \ko_{g,u,h,n}$ by an action of $\mathbb{Z}_2^{\times n}$.
\end{remark}

We now come to our main result concerning the moduli space of Klein surface with oriented marked points. We have the following main result by Costello \cite{costello1,costello2}:

\begin{proposition}\label{prop:costello}
The inclusion $D_{g,h,n}\hookrightarrow\nb_{g,h,n}$ is a homotopy equivalence.
\end{proposition}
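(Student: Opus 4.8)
The plan is to follow Costello \cite{costello1,costello2} and realise the inclusion as a strong deformation retraction built from hyperbolic geometry. Every surface occurring in $\nb_{g,h,n}$ is stable --- the excluded cases $(g,h,n)\in\{(0,1,0),(0,1,1),(0,1,2),(0,2,0)\}$ are exactly the topological types admitting no stable surface --- so, away from the exceptional non-hyperbolic types, each surface and each irreducible component of a nodal surface carries a canonical complete hyperbolic metric with geodesic boundary, depending continuously on the point of moduli space and invariantly under relabelling of the marked points; this is the same input used in the discussion of the topology of these moduli spaces above.

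First I would attach to each stable bordered surface $\Sigma$ with marked points on its boundary a canonical finite system $\mathcal{A}(\Sigma)$ of pairwise disjoint, properly embedded geodesic arcs with endpoints on $\partial\Sigma$ away from the marked points, such that cutting $\Sigma$ along $\mathcal{A}(\Sigma)$ produces only discs, each carrying at least three special points (marked points together with endpoints of cut arcs). Such a canonical system is produced by the Penner--Bowditch--Epstein spine/arc-complex construction adapted to bordered surfaces (equivalently, it is dual to the cut locus of the boundary, refined using the marked points); its essential features are that the combinatorial type of $\mathcal{A}(\Sigma)$ and the widths of its arcs depend continuously on $\Sigma\in\nb_{g,h,n}$ and are equivariant, hence descend to the (orbi)space $\nb_{g,h,n}$. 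By construction $D_{g,h,n}$ is precisely the locus of surfaces all of whose canonical arcs have been pinched to boundary nodes, while a general point of $\nb_{g,h,n}$ is a surface some of whose canonical arcs are already nodes.

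Second I would define the retraction by using the widths of the arcs of $\mathcal{A}(\Sigma)$ as coordinates and flowing them all monotonically to zero, with the time reparametrised so that the fully pinched surface --- a point of $D_{g,h,n}$ --- is reached at time $1$; arcs that are already nodes are kept as nodes throughout, so $D_{g,h,n}$ is fixed pointwise. Concretely one fixes the conformal structure on each not-yet-pinched disc piece and drives the plumbing parameter at each remaining arc to its nodal limit. One then checks that this flow is continuous on all of $\nb_{g,h,n}$, $S_n$-equivariant, and compatible with the orbifold structure (for $n>0$ the automorphism lemma makes $\nb_{g,h,n}$ and $D_{g,h,n}$ honest spaces, which streamlines this last point); since it is the identity on $D_{g,h,n}$ and lands in $D_{g,h,n}$ at time $1$, it exhibits $D_{g,h,n}\hookrightarrow\nb_{g,h,n}$ as a strong deformation retract, hence a homotopy equivalence.

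The main obstacle --- and the bulk of the work --- is the continuity and equivariance of both the canonical arc system and the resulting pinching flow across the stratified boundary of $\nb_{g,h,n}$: one must show that as a family of smooth surfaces degenerates to a nodal one the canonical cut systems converge correctly, and that the flow extends continuously over every nodal stratum and across the walls in the arc complex where $\mathcal{A}(\Sigma)$ jumps. This is exactly where the hyperbolic input (uniform collar lemmas, continuity of the hyperbolic metric in degenerating families, Fenchel--Nielsen-type coordinates near the nodal strata) is needed, and it is also precisely the part that transfers without change to Klein surfaces via the orienting double, which is what makes the paper's subsequent arguments go through.
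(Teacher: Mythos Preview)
The paper does not itself prove this proposition; it is quoted as Costello's result and the method is only made visible in the proof of the Klein analogue (Proposition~\ref{prop:main1}). There the argument runs in three steps which are quite different from yours: first, for $n=0$, one uses the \emph{inward geodesic flow} of $\partial K$ under the hyperbolic metric to push the boundary in until the surface acquires a node, giving a deformation retraction of $\nb_{g,h,0}$ onto its boundary $\partial\nb_{g,h,0}$; second, one bootstraps to $n>0$ using that the forgetful map $\nb_{g,h,n+1}\to\nb_{g,h,n}$ is a locally trivial fibration; third, one inducts on complexity, since $\partial\nb_{g,h,n}$ is assembled from products of lower $\nb$'s which are already known to retract onto their $D$-loci. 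No canonical arc system or simultaneous pinching is used.

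Your proposal is a genuinely different route, closer in spirit to the Penner/Bowditch--Epstein spine picture, but as written it has a real gap. The cut-locus construction does give a canonical spine whose dual is an arc system, but the quantity it attaches to each arc is the length of the dual spine edge, and when that length tends to zero the arc \emph{leaves the canonical system} (the spine edge contracts and the combinatorics change); it does not become a boundary node. So ``flowing the widths to zero'' does not drive the surface into $D_{g,h,n}$, and there is no evident continuous way to read off from $\mathcal{A}(\Sigma)$ a target point of $D_{g,h,n}$ together with plumbing parameters whose degeneration recovers $\Sigma$. You acknowledge the walls where $\mathcal{A}(\Sigma)$ jumps but do not explain why the putative retraction extends continuously across them; since on either side of such a wall you would be pinching \emph{different} arc systems, the endpoint in $D_{g,h,n}$ would jump. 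By contrast, Costello's inward flow creates one node at a time and then appeals to induction, sidestepping exactly this difficulty; this is also why it transfers cleanly to the Klein setting via the orienting double.
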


Our main result concerning the moduli space of Klein surfaces will follow from the Klein analogue of this:

\begin{proposition}\label{prop:main1}
The inclusion $D_{g,u,h,n}\hookrightarrow\kb_{g,u,h,n}$ is a homotopy equivalence.
\end{proposition}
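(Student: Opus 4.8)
The plan is to reduce the statement to Costello's Proposition~\ref{prop:costello} by passing to the orienting double. By Proposition~\ref{prop:odoubleequiv2}, the space $\kb_{g,u,h,n}$ is the moduli space of stable objects of $\dnsymriem$ of the corresponding topological type; concretely, a point is a pair $(X,\sigma)$ where $X$ is a stable bordered Riemann surface with only boundary nodes and $2n$ marked points on the boundary and $\sigma$ is an antiholomorphic involution interchanging the marked points in pairs, with $X/\sigma$ of Klein topological type $(g,u,h,n)$. When $u\geq 1$ the double $X=K_\mathbf{O}$ is connected, of some topological type $(\tilde g,\tilde h,2n)$, and forgetting $\sigma$ gives a finite-to-one map onto the closed subspace of Costello's space $\nb_{\tilde g,\tilde h,2n}$ of surfaces admitting such an involution; under this identification $D_{g,u,h,n}$ corresponds to the symmetric surfaces in $D_{\tilde g,\tilde h,2n}$, since the irreducible components of $X$ are discs exactly when those of $X/\sigma$ are. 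When $u=0$ the double is disconnected (two copies), and the same discussion applies with $\nb_{\tilde g,\tilde h,2n}$ replaced by a self-product of $\nb_{g,h,n}$'s; this case is in any event directly deducible from Lemma~\ref{lem:riemklein}.

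Next I would invoke that Costello's deformation retraction $H\colon\nb_{\tilde g,\tilde h,2n}\times[0,1]\to\nb_{\tilde g,\tilde h,2n}$ of $\nb_{\tilde g,\tilde h,2n}$ onto $D_{\tilde g,\tilde h,2n}$ is \emph{canonical}: it is built, via the symmetric pants decompositions, Fenchel--Nielsen coordinates and the hyperbolic metric recalled in this section, using only the dianalytic (conformal) data of the surface and no choice of orientation. Hence $H$ commutes with every conformal \emph{and} anticonformal isomorphism of the underlying surface, in particular with the involution of $\nb_{\tilde g,\tilde h,2n}$ induced by $\sigma$. Consequently, for a symmetric surface $(X,\sigma)$ the path $t\mapsto H_t(X)$ consists of surfaces carrying, by canonicity, a distinguished antiholomorphic involution $\sigma_t$ of the same topological type, depending continuously on $(X,\sigma,t)$, so that $H$ lifts to a homotopy $(X,\sigma,t)\mapsto(H_t(X),\sigma_t)$ of the moduli of pairs, i.e.\ of $\kb_{g,u,h,n}$.

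This lifted homotopy is a deformation retraction of $\kb_{g,u,h,n}$ onto $D_{g,u,h,n}$: it starts at the identity, it fixes $D_{g,u,h,n}$ pointwise (because $H$ fixes $D_{\tilde g,\tilde h,2n}$), and its time-$1$ map lands in $D_{g,u,h,n}$ (because $H_1(X)$ has all components discs, hence so does the quotient $H_1(X)/\sigma_1$, and because the number of oriented marked points and the Klein topological type are preserved throughout). Since $\kb_{g,u,h,n}$ is an orbifold with corners and $D_{g,u,h,n}$ a sub-orbispace, a deformation retraction of spaces is in particular a homotopy equivalence, which is the assertion. For the finitely many exceptional $(g,u,h,n)$ for which $\kb_{g,u,h,n}=\emptyset$ there is nothing to prove, and in the orientable case $u=0$ one should record that this is consistent with $D_{g,h,n}\hookrightarrow\nb_{g,h,n}$ being an equivalence.

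The main obstacle is the canonicity and equivariance claim of the second step: one must verify that Costello's construction in \cite{costello2}, which is phrased for oriented bordered surfaces, genuinely uses no orientation data, so that it descends through the orientation-reversing involution $\sigma$ both when $\sigma$ acts freely (the unorientable Klein surfaces) and when $\sigma$ has a fixed geodesic boundary curve (the orientable case). This is precisely the assertion anticipated in the outline that ``hyperbolic geometry features heavily in the proof and the same methods apply directly to Klein surfaces''; making it rigorous amounts to going through Costello's argument and checking the (anti)conformal naturality of each step, together with the routine but slightly delicate bookkeeping relating the topological type $(g,u,h,n)$ of a Klein surface to that, $(\tilde g,\tilde h,2n)$, of its orienting double.
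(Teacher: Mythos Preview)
Your overall strategy --- pass to the orienting double and exploit $\sigma$-equivariance to transfer Costello's argument --- is the same as the paper's, and the paper handles $u=0$ via Lemma~\ref{lem:riemklein} exactly as you suggest. The difference is in the execution, and there is a genuine gap.

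The problem is your assumption that Costello's result comes packaged as a single canonical deformation retraction $H\colon\nb_{\tilde g,\tilde h,2n}\times[0,1]\to\nb_{\tilde g,\tilde h,2n}$ onto $D_{\tilde g,\tilde h,2n}$ that one can then lift. It does not. Costello's proof (and the paper's adaptation) proceeds in three logically distinct steps: (i) the hyperbolic boundary flow gives a genuine deformation retraction, but only of $\nb_{g,h,0}$ onto its \emph{boundary} $\partial\nb_{g,h,0}$, not onto $D$; (ii) a locally-trivial-fibration argument for the map forgetting a marked point propagates the statement $\partial\nb\hookrightarrow\nb$ to $n>0$; (iii) an induction on the boundary stratification, using that each stratum is assembled from smaller moduli spaces, then yields $D\hookrightarrow\nb$. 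Steps (ii) and (iii) are homotopy-equivalence arguments, not constructions of retractions, and they do not assemble into a single canonical $H$ whose equivariance one could check once and for all. In particular your claim that $H$ ``fixes $D_{\tilde g,\tilde h,2n}$ pointwise'' has no referent.

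The paper therefore does precisely what your final paragraph concedes is necessary: it reruns each step directly in the Klein setting rather than trying to lift a finished product. The boundary flow is transported via the orienting double (Lemma~\ref{lem:incl1}), the forgetful fibration is established for $\kb$ (Lemma~\ref{lem:incl2}), and the stratification induction is run on $\kb$ itself. Your proposal would converge to this once you unpack ``Costello's retraction'' into its constituent pieces and verify each --- so the direction is right, but the shortcut through a single global $H$ is not available.
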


\begin{proof}
By \autoref{lem:riemklein} with \autoref{prop:costello} this is clear if $u=0$. We therefore will restrict our attention to $u\neq 0$. Fortunately for us the proof of \autoref{prop:costello} carries over easily to a proof of this. As in that case, to prove \autoref{prop:main1} we first show the following:

\begin{lemma}\label{lem:incl1}
The inclusion $\partial\kb_{g,u,h,0}\hookrightarrow\kb_{g,u,h,0}$ is a homotopy equivalence of orbispaces.
\end{lemma}

\begin{proof}
The key idea of the proof is to construct a deformation retract of $\kb_{g,u,h,0}$ onto its boundary $\partial\kb_{g,u,h,0}$. This is done by using the hyperbolic metric on a Klein surface $K$ to flow the boundary $\partial K$ inwards until $K$ becomes singular. Some of the work involved can be avoided by passing to the orienting double and using the facts for Riemann surfaces proved in \cite{costello2}.

Let $K\in\ko_{g,u,h,0}$ be a Klein surface. Since we are assuming $u\geq 1$ and $(g,u,h)\neq(0,1,1)$ (since $\kb_{0,1,1,0}$ is empty) then the complex double of $K$ is a hyperbolic surface and so there is a unique hyperbolic metric on $K$ such that the boundary is geodesic. By taking the unit inward pointing normal vector field on $\partial K$ and using the geodesic flow on $K$ we can flow $\partial K$ inwards. Let $K_t$ be the surface with boundary obtained by flowing in $\partial K$ a distance $t$. Note that this process lifts to the orienting double $K_\mathbf{O}$ which is connected and corresponds to the process obtained using the hyperbolic metric on $K_\mathbf{O}$. In \cite[Lemma 3.0.8]{costello2} it is shown that this process applied to a Riemann surface eventually yields a singular surface and further that all the singularities are nodes. So by considering the orienting double we see the same is true for the Klein surface $K$. More precisely, for some $T$ we have $K_T\in\partial\kb_{g,u,h,0}$.

Let $S\in\mathbb{R}_{\geq 0}$ be the smallest number such that $K_S\in\partial\kb_{g,u,h,0}$ so that $K_t$ is in the interior $\ko_{g,u,h,0}$ for all $t<S$. We have a map $\Phi\co \ko_{g,u,h,0}\times[0,1]\rightarrow\kb_{g,u,h,0}$ defined by $\Phi(K,x)=K_{Sx}$. We extend this to a map $\Phi'\co \kb_{g,u,h,0}\times[0,1]\rightarrow\kb_{g,u,h,0}$ by setting $\Phi'(K',t)=K'$ for $K'\in\partial\kb_{g,u,h,0}$. To see this extends $\Phi$ continuously we take a sequence $K_i$ of surfaces converging to $K\in\partial\kb_{g,u,h,0}$ and let $x_i$ be any sequence. We must show $\Phi'(K_i,x_i)\rightarrow K$. Observing that, after forgetting the symmetry, $(K_i)_\mathbf{O}\rightarrow K_\mathbf{O}$ and comparing to the proof of \cite[Lemma 3.0.9]{costello2} this is clear. Then $\Phi'$ is a deformation retract of the inclusion as required.
\end{proof}

\begin{lemma}\label{lem:incl2}
The inclusion $\partial\kb_{g,u,h,n}\hookrightarrow\kb_{g,u,h,n}$ is a homotopy equivalence.
\end{lemma}

\begin{proof}
Since the moduli space of a M\"obius strip with an oriented marked point is the same as that of an annulus with a marked point then if $(g,u,h,n)=(0,1,1,1)$ \autoref{lem:incl2} can be seen directly.

There is a map $\kb_{g,u,h,n+1}\rightarrow\kb_{g,u,h,n}$ forgetting the last marked point and contracting any resulting unstable components which is a locally trivial fibration (in the orbispace sense). This follows by considering for some $K\in\kb_{g,u,h,n}$ the space of ways of adding an oriented marked point to $\partial K$. This is the same as the space of ways of adding a single marked point to $\partial K_\mathbf{O}$. Then by the same argument as \cite[Lemma 3.0.5]{costello2} it is clear that this map is a locally trivial fibration. Therefore if $\partial\kb_{g,u,h,n}\hookrightarrow\kb_{g,u,h,n}$ is a homotopy equivalence then so is $\partial\kb_{g,u,h,n+1}\hookrightarrow\kb_{g,u,h,n+1}$ and \autoref{lem:incl2} follows.
\end{proof}

We can now see that \autoref{prop:main1} follows from \autoref{lem:incl2} by an inductive argument like that in \cite[Lemma 3.0.12]{costello2}.
\end{proof}

We can also obtain such a result for the operad $\mb$. We consider $K\in\mb$ as a na\"ive nodal Klein surface and then consider the space of ways of adding a marked point to $\partial K$ by an identical argument to \cite[Lemma 3.0.9]{costello2} to see the map $\mb_{g,n+1}\rightarrow\mb_{g,n}$ forgetting the last marked point and stabilising is a locally trivial fibration in the orbispace sense.

Since $\mb_{\tilde{g},0}$ can be obtained from $\coprod \kb_{g,u,h,0}$ by identifying only points in $\coprod \partial \kb_{g,u,h,0}$ (by forgetting the dianalytic structure at nodes), it follows immediately from \autoref{lem:incl1} that there is a deformation retract of the map $\mb_{\tilde{g},0}\setminus\mo_{\tilde{g},0}\hookrightarrow \mb_{\tilde{g},0}$. Therefore by the same argument as above we can deduce the following:

\begin{proposition}\label{prop:main2}
The inclusion $\dr_{g,n}\hookrightarrow\mb_{g,n}$ is a homotopy equivalence.
\qed
\end{proposition}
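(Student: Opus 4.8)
The plan is to run the proof of Proposition~\ref{prop:main1} once more, transporting each step across the "forget the dianalytic structure at the nodes" construction that produces $\mb$ from $\coprod\kb_{g,u,h,0}$. As in that case, it suffices to produce a deformation retract of $\mb_{\tilde{g},n}$ onto the subspace $\dr_{\tilde{g},n}$ of admissible symmetric Riemann surfaces all of whose irreducible components are spheres (equivalently, whose quotient Klein surface has only disc components), and this will be assembled from a boundary-retraction statement together with the two fibration-and-induction steps used for $\kb$.

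First I would establish the analogue of Lemma~\ref{lem:incl1}: the inclusion $\mb_{\tilde{g},0}\setminus\mo_{\tilde{g},0}\hookrightarrow\mb_{\tilde{g},0}$ is a homotopy equivalence of orbispaces. The point, already noted in the text, is that $\mb_{\tilde{g},0}$ is obtained from $\coprod_{2g+h+u-1=\tilde{g}}\kb_{g,u,h,0}$ by identifying \emph{only} points of $\coprod\partial\kb_{g,u,h,0}$ (a surface with at least one node loses the dianalytic data distinguishing its various Klein-surface preimages). The hyperbolic-flow retraction $\Phi'$ built in the proof of Lemma~\ref{lem:incl1} fixes $\partial\kb_{g,u,h,0}$ pointwise and is read off from the corresponding flow on the orienting double, hence is natural under the structure maps, so it descends to a deformation retract of $\mb_{\tilde{g},0}$ onto $\mb_{\tilde{g},0}\setminus\mo_{\tilde{g},0}$.

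Next comes the marked-point induction, mirroring Lemma~\ref{lem:incl2}. As already observed, the map $\mb_{\tilde{g},n+1}\to\mb_{\tilde{g},n}$ forgetting the last marked point and stabilising is a locally trivial fibration of orbispaces: for $K\in\mb_{\tilde{g},n}$ regarded as a na\"ive nodal Klein surface, the space of ways of adding one more boundary marked point is the space of ways of adding a single marked point to $\partial K$, and Costello's argument (\cite[Lemma~3.0.5, Lemma~3.0.9]{costello2}) applies verbatim. Pulling back the retraction of the base along such a fibration and using the $n=0$ case, one gets by induction on $n$ that $\mb_{\tilde{g},n}\setminus\mo_{\tilde{g},n}\hookrightarrow\mb_{\tilde{g},n}$ is a homotopy equivalence for all $n$, with the low-complexity exceptional cases checked directly as for \ref{lem:incl2}.

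Finally I would assemble the retraction onto $\dr_{\tilde{g},n}$ by induction on the orbifold dimension $3\tilde{g}-3+n$, exactly as in \cite[Lemma~3.0.12]{costello2}: the boundary $\mb_{\tilde{g},n}\setminus\mo_{\tilde{g},n}$ is the union of the images of the operadic gluing maps from products of strictly lower-dimensional moduli spaces, each of which retracts onto its $\dr$-locus by the inductive hypothesis, and these retracts are compatible along strata because the gluing maps are; combining this with the boundary homotopy equivalence of the previous step gives the desired deformation retract of $\mb_{\tilde{g},n}$ onto $\dr_{\tilde{g},n}$. The main obstacle is bookkeeping rather than geometry: one must check that the descent of $\Phi'$ through the non-group quotient defining $\mb$ is well defined and continuous, and that the stratum-wise retracts in the last step glue into a single global one — this is precisely where the orbispace formulation of the fibrations and of the orbi-cell structure from Proposition~\ref{prop:dianalyticgraphs} does the work.
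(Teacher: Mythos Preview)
Your proposal is correct and follows essentially the same approach as the paper. The paper's argument, given in the paragraph immediately preceding the proposition, is exactly your three-step outline: observe that $\mb_{\tilde{g},0}$ is obtained from $\coprod\kb_{g,u,h,0}$ by identifying only boundary points so that the retraction of Lemma~\ref{lem:incl1} descends, note that the forgetful map $\mb_{\tilde{g},n+1}\to\mb_{\tilde{g},n}$ is a locally trivial orbispace fibration by Costello's argument, and then conclude by the same dimension induction as in \cite[Lemma~3.0.12]{costello2}.
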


We now obtain our main theorem immediately from \autoref{prop:main1} and \autoref{prop:main2}.

\begin{theorem}
\Needspace*{4\baselineskip}\mbox{}
\begin{itemize}
\item The inclusion $D\hookrightarrow\kb$ is a homotopy equivalence of extended topological modular operads.
\item The inclusion $\dr\hookrightarrow\mb$ is a homotopy equivalence of extended topological modular operads.
\qed
\end{itemize}
\end{theorem}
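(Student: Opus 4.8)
The plan is to deduce this theorem as a formal consequence of Propositions \ref{prop:main1} and \ref{prop:main2}. Here by a \emph{homotopy equivalence of extended topological modular operads} we mean a morphism of such operads that induces a homotopy equivalence of spaces (or of orbispaces, in the cases with $n=0$) on every component; this is the notion that is preserved by the chain functor $C_*$, and hence the one needed for the corollary stating that $C_*(D)\simeq C_*(\kb)$ and $C_*(D)/(a=1)\simeq C_*(\mb)$.

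First I would check that the two inclusions really are morphisms of extended topological modular operads. For $D\hookrightarrow\kb$ this amounts to verifying that the subcollection $D$ is closed under all the structure maps of $\kb$: gluing two Klein surfaces along compatibly oriented marked points, or self-gluing a single surface along two of its marked points, merely introduces a new boundary node and does not change the irreducible components, so if every irreducible component of the inputs is a disc the same is true of the output. Hence $D$ is genuinely a suboperad (including under the newly adjoined contraction/self-gluing maps), and the inclusion is a morphism; on the extended part it is the identity, since by definition $D((0,2))=\kb((0,2))=\mathbb{Z}_2$. The identical argument, with ``disc'' replaced by ``sphere'', shows that $\dr$ is a suboperad of $\mb$ (gluing admissible symmetric Riemann surfaces at interior marked points, or self-gluing, introduces an interior node and leaves the sphere components unchanged), that $\dr\hookrightarrow\mb$ is a morphism of extended topological modular operads, and that it is the identity on the $((0,2))$ component.

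Next I would verify the componentwise homotopy equivalence. By construction $\kb((\tilde g,n))=\coprod_{2g+h+u-1=\tilde g}\kb_{g,u,h,n}$ and $D((\tilde g,n))=\coprod_{2g+h+u-1=\tilde g}D_{g,u,h,n}$, and the inclusion respects this decomposition into connected (orbi)components, restricting on each summand to $D_{g,u,h,n}\hookrightarrow\kb_{g,u,h,n}$, which is a homotopy equivalence by Proposition \ref{prop:main1}. A disjoint union of homotopy equivalences is a homotopy equivalence, and since the $S_n$-action only relabels marked points within each summand there is no equivariance issue; on the $((0,2))$ component the map is the identity. Hence $D\hookrightarrow\kb$ is a homotopy equivalence on every component. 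For $\dr\hookrightarrow\mb$ the argument is even shorter, since $\mb((\tilde g,n))=\mb_{\tilde g,n}$ is already a single space: the map is the homotopy equivalence of Proposition \ref{prop:main2} on each component and the identity on $((0,2))$.

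The essential content is entirely contained in Propositions \ref{prop:main1} and \ref{prop:main2}, and the steps above are just bookkeeping. The only point that genuinely deserves care is fixing, once and for all, the meaning of ``homotopy equivalence of extended topological modular operads'' to be the objectwise one — so that it survives application of $C_*$ and yields the quasi-isomorphisms of dg modular operads used afterwards — together with the (straightforward) verification that the subcollections $D$ and $\dr$ are closed not merely under operadic composition but also under the contraction and self-gluing maps adjoined in passing to extended modular operads.
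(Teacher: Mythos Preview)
Your proposal is correct and follows exactly the paper's approach: the paper simply states that the theorem is immediate from Propositions \ref{prop:main1} and \ref{prop:main2}, and you have spelled out the straightforward bookkeeping (that $D$ and $\dr$ are closed under gluing and self-gluing, and that a disjoint union of homotopy equivalences is a homotopy equivalence) which the paper leaves implicit. The only minor overstatement is calling the summands $\kb_{g,u,h,n}$ ``connected'' components --- they need not be connected, but this plays no role in your argument since the decomposition by topological type is all that is used.
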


Given an appropriate chain complex $C_*$ (we take coefficients in $\mathbb{Q}$) with a K\"unneth map $C_*(X)\otimes C_*(Y)\rightarrow C_*(X\times Y)$ then $C_*(\kb)$ is an extended dg modular operad. An algebra over this is called an open KTCFT. Since the space of dianalytic ribbon graphs is obtained from the space of M\"obius graphs by forgetting the colouring we see that $C_*(\dr)=C_*(D)/(a=1)$ where $a\in C_*(D)((0,2))\cong\mathbb{Q}[\mathbb{Z}_2]$ is the involution. The above then translates into:

\begin{theorem}\label{thm:main}
There are quasi-isomorphism of extended dg modular operads over $\mathbb{Q}$
\begin{gather*}
C_*(D)\simeq C_*(\kb)\\
C_*(D)/(a=1)\simeq C_*(\mb)
\end{gather*}
where $a\in C_*(D)((0,2))\cong\mathbb{Q}[\mathbb{Z}_2]$ is the involution.
\qed
\end{theorem}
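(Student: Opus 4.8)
The plan is to deduce the statement from the homotopy equivalences of extended topological modular operads established immediately above, by transporting them along a rational chain functor, and then to match the resulting chain operads against the cellular descriptions of $D$ and $\dr$.

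First I would fix $C_*$ to be rational chains --- the coefficients must be $\mathbb{Q}$ so that the finite isotropy groups of the orbispaces involved are invisible --- and recall that, equipped with the K\"unneth map $C_*(X)\otimes C_*(Y)\rightarrow C_*(X\times Y)$, the functor $C_*$ is lax symmetric monoidal, hence sends topological modular operads to dg modular operads (this is what makes $C_*(\kb)$, $C_*(\mb)$, $C_*(D)$ and $C_*(\dr)$ extended dg modular operads) and sends levelwise homotopy equivalences to levelwise quasi-isomorphisms. Since the inclusions $D\hookrightarrow\kb$ and $\dr\hookrightarrow\mb$ are maps of extended topological modular operads that are homotopy equivalences on each constituent space, applying $C_*$ produces quasi-isomorphisms $C_*(D)\rightarrow C_*(\kb)$ and $C_*(\dr)\rightarrow C_*(\mb)$ of extended dg modular operads. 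This disposes of the first assertion and reduces the second to identifying $C_*(\dr)$ with $C_*(D)/(a=1)$.

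Next I would read off both chain complexes from their orbi-cell decompositions. By Proposition \ref{prop:celldecomp}, $C_*(D)$ has a $\mathbb{Q}$-basis of oriented reduced M\"obius graphs, and by Proposition \ref{prop:dianalyticgraphs}, $C_*(\dr)$ has a $\mathbb{Q}$-basis of oriented reduced dianalytic ribbon graphs --- that is, the same graphs with the colouring of half edges discarded --- while by Remark \ref{rem:cellattach} the differential on each side is the map expanding vertices of valence greater than $3$, which plainly commutes with forgetting colours. The point I would then make is that the colour of a half edge is precisely the $\mathbb{Z}_2$-datum recorded by the element $a\in C_*(D)((0,2))\cong\mathbb{Q}[\mathbb{Z}_2]$: composing an edge of a M\"obius graph operadically with $a$ reverses its colour, this being the reflection relation of $\mass$ built into the cell structure. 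Consequently, quotienting by the operadic ideal generated by $a-1$ collapses the colouring, carrying the basis of oriented reduced M\"obius graphs onto the basis of oriented reduced dianalytic ribbon graphs compatibly with composition, contraction and differential, so $C_*(D)/(a=1)\cong C_*(\dr)$; combined with the previous step this gives $C_*(D)/(a=1)\simeq C_*(\mb)$.

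The hard part, and the only genuine work, is the last step: I would need to check that the orbi-cell decompositions of \ref{prop:celldecomp} and \ref{prop:dianalyticgraphs} are operadic, that the cellular boundary maps really do agree with the expanding-vertices differential, and that the sign conventions coming from graph orientations and determinant twists remain consistent both when passing to cellular chains and when specialising $a=1$. The subtlety to watch for is the one flagged in Remark \ref{rem:notgrpquotient}: because $\dr_{\tilde g,n}$ is not a global quotient of $\coprod D_{g,u,h,n}$ by a group action, the identification $C_*(\dr)=C_*(D)/(a=1)$ cannot be read off from any space-level quotient and must instead be proved directly on cellular chain complexes, where it is a purely local matter.
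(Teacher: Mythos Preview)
Your proposal is correct and follows essentially the same route as the paper: the paper simply observes that applying a rational chain functor with K\"unneth map to the homotopy equivalences $D\hookrightarrow\kb$ and $\dr\hookrightarrow\mb$ yields the desired quasi-isomorphisms, and notes in one line that $C_*(\dr)=C_*(D)/(a=1)$ because dianalytic ribbon graphs are M\"obius graphs with the colouring forgotten. Your expansion of the second step via the orbi-cell decompositions and your explicit caution about operadic compatibility and signs are more detailed than the paper's treatment, but the argument is the same.
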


Since the spaces $D_{g,u,h,n}$ are orbi-cell complexes we can give a simple description for the operad $C_*(D)$ over $\mathbb{Q}$ using the cellular chain complex. We now identify this dg operad $C_*(D)$ and so relate our results to the previous sections.

\begin{proposition}\label{prop:cellchainiso}
There is an isomorphism
\[C_*(D)\cong\moddmass\]
(up to homological/cohomological grading).
\end{proposition}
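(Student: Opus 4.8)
The plan is to exhibit the isomorphism $C_*(D) \cong \moddmass$ by matching the orbi-cell structure of $D$ with the description of $\moddmass$ as reduced oriented M\"obius graphs, given in Proposition \ref{prop:orientedmobgraph}. First I would recall that by Proposition \ref{prop:celldecomp} the space $D_{g,u,h,n}$ decomposes into orbi-cells indexed by reduced M\"obius graphs $G$ of topological type $(g,u,h)$ with $n$ legs, and that (for $n>0$) this is an honest cell complex. The cellular chain complex $C_*(D)((g,n))$ is therefore freely generated over $\mathbb{Q}$ by these orbi-cells, each graded by its dimension, which by the construction in Proposition \ref{prop:celldecomp} equals the number of edges of $G$ (the moduli space of a Klein disc with $k$ marked points is a cell of dimension $k-3$, so $X(G) = \prod_v X(v)$ has dimension $\sum_v (n(v)-3) = (\text{number of half-edge-incidences}) - 3|\vertices(G)|$, which after accounting for legs and edges gives exactly the grading $e + 3 - 3g - n$ appearing in Proposition \ref{prop:orientedmobgraph} up to an overall shift). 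The first substantive step is thus to set up a bijection, for each orbi-cell, between its $\mathbb{Q}$-generator and a generator of $\underline{\mass}((G))\otimes\det(G)$: a reduced M\"obius graph $G$ together with an orientation data. The orientation (ordering of edges and of cycles in $H_1(|G|)$, up to sign) must be read off from an orientation of the cell $X(G)$; here the key point is that a cell of the moduli space of a marked Klein disc carries a canonical orientation only up to the sign ambiguity coming from the anti-analytic reflection, which matches precisely the sign ambiguity in the definition of $\det(G)$ and the twist by the sign representation of $S_n$ on the M\"obius-graph side.

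The second step is to check the differentials agree. On the topological side, by Remark \ref{rem:cellattach} the orbi-cell labelled by $G$ is attached to those cells labelled by all graphs obtained by expanding a vertex of valence $>3$ into two vertices joined by a new edge (two marked points colliding corresponds to bubbling off a disc). On the algebraic side, the differential of $\moddmass$ described after Proposition \ref{prop:orientedmobgraph} (and in the $\dmass$ discussion preceding Lemma \ref{lem:koszul}) is exactly the expanding differential: it replaces a corolla by a sum over all ways of splitting it into two corollas joined by an edge, with the induced orientation obtained by appending the new edge. So I would verify that the incidence numbers in the cellular boundary map equal the coefficients in the expanding differential, including signs. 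The sign comparison is where care is needed: one must confirm that the orientation convention on cells (ordering edges left-to-right, bottom-to-top, as in the picture for oriented M\"obius trees) together with the rule that the new edge is appended last reproduces the cellular incidence signs; this is essentially the same computation that underlies the identification $\moddass \cong C_*(D^{\mathbb{R}}\text{-part of }\nb)$ in Costello's work, and the M\"obiusisation only adds the $\mathbb{Z}_2$-colouring data which is locally constant on cells and hence does not interact with the differential.

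The third step is to check that this bijection is compatible with the operad structure: the operadic compositions and contractions of $\kb$ (and hence of its suboperad $D$) are given by gluing surfaces at marked points, which on orbi-cells corresponds to gluing the labelling M\"obius graphs at legs, forming a new edge; this is precisely the composition and contraction in $\moddmass$ described after Proposition \ref{prop:orientedmobgraph}, with matching edge-ordering conventions. One also checks that $C_*(D)((0,2)) \cong \mathbb{Q}[\mathbb{Z}_2] = K$ matches $\moddmass((0,2))$, so that the identification respects the extended modular (and $K$-collection) structure. I expect the main obstacle to be the careful bookkeeping of orientations and signs throughout — reconciling the orientation of a product cell $X(G) = \prod_v X(v)$ with the determinant line $\Det(k^{\edges(G)}) \otimes \Det(H_1(|G|))$, and ensuring the boundary-map signs match the expanding-differential signs — rather than anything conceptually deep; once the $u=0$ case is in hand via Costello's identification $\moddass$ with the analogous ribbon-graph complex (using Lemma \ref{lem:riemklein} and the relation $C(\mass)(n) = \bigoplus^{2^n} C(\ass)(n)$ of Lemma \ref{lem:koszul}), the general case follows by tracking the colouring data, which contributes only the extra $\mathbb{Z}_2$'s and the sign twist already built into $\mass$ and $\dmass$.
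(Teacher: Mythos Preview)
Your proposal is correct and follows essentially the same approach as the paper: both arguments invoke the orbi-cell decomposition of Proposition \ref{prop:celldecomp}, the attaching-map description in Remark \ref{rem:cellattach}, and the graph-theoretic description of $\moddmass$ from Proposition \ref{prop:orientedmobgraph}, then verify that orientations of orbi-cells match orientations of M\"obius graphs (the paper does this via Remark \ref{rem:orientation}) and that the operad structures coincide since $C_*(D)$ is the modular closure of its genus $0$ part. Your write-up is more detailed on the sign and orientation bookkeeping than the paper's terse proof, but the logical skeleton is the same.
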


\begin{remark}
By `up to homological/cohomological grading' we mean that $C_*(D)$ is graded homologically whereas $\moddmass$ is graded cohomologically. Given a cohomologically graded complex $V=\bigoplus V^i$ we set $V_{-i}=V^i$ to obtain an equivalent homologically graded complex. We can swap the grading of operads in this way.
\end{remark}

\begin{proof}
This follows from considering \autoref{prop:celldecomp} and \autoref{rem:cellattach} with \autoref{prop:orientedmobgraph}. The space $C_*(D_{g,u,h,n})$ is generated by oriented orbi-cells so a basis is given by reduced M\"obius graphs $G$ of topological type $(g,u,h,n)$ together with an orientation of the corresponding orbi-cell. An orientation can be given by an ordering of the vertices of $G$ and at each vertex an ordering of the set of half edges attached to it. It is clear by considering \autoref{rem:orientation} that orientations of the corresponding orbi-cell are equivalent to orientations on $G$ as defined earlier. Noting then that $C_*(D)$ is the modular closure of its genus $0$ part, it is not too difficult to check that the operad structures coincide.
\end{proof}

This is the Klein version of the fact by Costello \cite{costello1} that $\moddass$ gives a chain model for the homology of $\nb$ which gives the well known ribbon graph complexes computing the homology of the spaces $\no_{g,h,n}\simeq\nb_{g,h,n}$.

In our case this means we obtain M\"obius graph complexes computing the rational homology of the spaces $\ko_{g,u,h,n}\simeq\kb_{g,u,h,n}$, generated by oriented reduced M\"obius graphs with the differential expanding vertices of valence greater than $3$. When $n>0$ this computes the integral homology since $D_{g,u,h,n}$ is then an ordinary cell complex. When $u=0$ and $n>0$ this complex is a sum of ribbon graph complexes as expected. For $u=n=0$ this complex is the ribbon graph complex quotiented by the action of $\mathbb{Z}_2$ reversing the cyclic ordering at every vertex of a graph. For $u\neq 0$ we obtain combinatorially distinct complexes.

We can also obtain graph complexes for the rational homology of the spaces $\mo_{\tilde{g},n}\cong(\coprod\ko_{g,u,h,n})/\mathbb{Z}_2^{\times n}$ by forgetting the colours of the legs of M\"obius graphs. Additionally by forgetting the colours of all the half edges of M\"obius graphs we obtain graph complexes for the spaces $\mb_{\tilde{g},n}$. We will describe all these graph complexes concretely, without reference to operads, in the next section.

We finish this section with some observations. As already stated we have found two different ways of approaching the problem of allowing nodes on Klein surfaces. In the case of surfaces with oriented marked points we obtain a partial compactification that is homotopy equivalent to the space of smooth surfaces. We should note that $H_0(\kb)\cong\modmass$. In the second case the partial compactification is quite different.

Since the spaces $\mo_{\tilde{g},n}$ (which are in general not connected) are obtained from the disjoint union of the spaces $\ko_{g,u,h,n}$ modulo the action of the finite group $\mathbb{Z}_2^{\times n}$ then the non-zero degree rational homology of $\mo_{0,n}$ is trivial (as $\mass$ is Koszul).

There is a map of operads given by the composition $q\co \moddass\hookrightarrow\moddmass\twoheadrightarrow\moddmass/(a=1)$ and this map is surjective. Geometrically this corresponds to the fact that the spaces of $\dr$ are subspaces of the loci of admissible Riemann surfaces in $\mb$ having an orientable quotient (and so up to homotopy we do not need to worry about unorientable surfaces in $\mb$). It is easy to see that $H_0(\mb)\cong H_0(\dr)\cong\modcom$ and so the spaces $\mb_{\tilde{g},n}$ are connected.

More interestingly the spaces $\mb_{0,n}$ have non-trivial rational homology in higher degrees. To see this we first note that if $T\in\dmass/(a=1)$ is a cycle and $d(T')=T$ for some $T'$ then there is a $G\in\dass$ such that $q(G)=T'$ so $T=q(dG)$ and so the cycle $T$ lifts to a cycle $dG$ in $\dass$. Therefore if we find a cycle in $\dmass/(a=1)$ that does not lift to another cycle we know it gives a non-trivial homology class. It is easy to write down an example, see \autoref{fig:nontrivialclass}. This fact also justifies \autoref{rem:notgrpquotient}.

\begin{figure}[ht!]
\centering{\small
\begin{align*}
T\quad=\quad &+ \quad
{\xygraph{!{<0cm,0cm>;<0.6cm,0cm>:<0cm,0.6cm>::}
&&3="3"&4="4"\\
1="1"&2="2"&="w" \\
&="v"\\
&="0"
"v"-"1" "v"-"2" "v"-"0" "v"-"w" "w"-"3" "w"-"4"
}}
\quad - \quad
{\xygraph{!{<0cm,0cm>;<0.6cm,0cm>:<0cm,0.6cm>::}
&&3="3"&4="4"\\
2="2"&1="1"&="w" \\
&="v"\\
&="0"
"v"-"1" "v"-"2" "v"-"0" "v"-"w" "w"-"3" "w"-"4"
}}\\
\quad &+ \quad
{\xygraph{!{<0cm,0cm>;<0.6cm,0cm>:<0cm,0.6cm>::}
&1="1"&3="3"&4="4"\\
2="2"&&="w" \\
&="v"\\
&="0"
"v"-"2" "v"-"w" "v"-"0" "w"-"1" "w"-"3" "w"-"4"
}}
\quad - \quad
{\xygraph{!{<0cm,0cm>;<0.6cm,0cm>:<0cm,0.6cm>::}
&2="1"&3="3"&4="4"\\
1="2"&&="w" \\
&="v"\\
&="0"
"v"-"2" "v"-"w" "v"-"0" "w"-"1" "w"-"3" "w"-"4"
}}
\quad + \quad
{\xygraph{!{<0cm,0cm>;<0.6cm,0cm>:<0cm,0.6cm>::}
2="2"&&3="3"\\
1="1"&="w"&4="4" \\
&="v"\\
&="0"
"v"-"1" "v"-"4" "v"-"0" "v"-"w" "w"-"3" "w"-"2"
}}\\
\quad &- \quad
{\xygraph{!{<0cm,0cm>;<0.6cm,0cm>:<0cm,0.6cm>::}
1="2"&&3="3"\\
2="1"&="w"&4="4" \\
&="v"\\
&="0"
"v"-"1" "v"-"4" "v"-"0" "v"-"w" "w"-"3" "w"-"2"
}}
\quad + \quad
{\xygraph{!{<0cm,0cm>;<0.6cm,0cm>:<0cm,0.6cm>::}
2="2"&1="1"&3="3"\\
&="w"&&4="4"\\
&&="v"\\
&&="0"
"v"-"4" "v"-"w" "v"-"0" "w"-"1" "w"-"3" "w"-"2"
}}
\quad - \quad
{\xygraph{!{<0cm,0cm>;<0.6cm,0cm>:<0cm,0.6cm>::}
1="2"&2="1"&3="3"\\
&="w"&&4="4"\\
&&="v"\\
&&="0"
"v"-"4" "v"-"w" "v"-"0" "w"-"1" "w"-"3" "w"-"2"
}}
\end{align*}}
\caption{A non-trivial homology class in $H_1(\mb_{0,5})$. When the differential is applied to the first two terms of $T$ two of the resulting trees cancel as elements of $\dmass/(a=1)$ but not as elements of $\dass$ no matter how we lift $T$.}
\label{fig:nontrivialclass}
\end{figure}

\subsection{Summary of the associated graph complexes}
To make our results explicit we will finish by unwrapping \autoref{thm:main} and \autoref{prop:cellchainiso} and defining the graph complexes in a more explicit and straightforward manner, without reference to operads.

Recall an orientation of a graph $G$ is a choice of orientation of the vector space $\mathbb{Q}^{\edges(G)}\oplus H_1(|G|,\mathbb{Q})$. Denote by $\Gamma_{g,h,n}$ the vector space over $\mathbb{Q}$ generated by oriented reduced ribbon graphs of topological type $(g,h,n)$. That is, equivalence classes of pairs $(G,\mathrm{or})$ with $G$ a reduced ribbon graph of genus $g$ with $h$ boundary components and $n$ legs and $\mathrm{or}$ an orientation of $G$, subject to the relations $(G,-\mathrm{or})=-(G,\mathrm{or})$. 

Denote by $\mob\Gamma_{g,u,h,n}$ the vector space over $\mathbb{Q}$ generated by oriented reduced M\"obius graphs of topological type $(g,u,h,n)$. 

Denote by $\Gamma_{\tilde{g},n}$ the space 
\[\Gamma_{\tilde{g},n}=\bigoplus_{2g+h-1=\tilde{g}}\Gamma_{g,h,n}\]
and denote by $\mob\Gamma_{\tilde{g},n}$ the space:
\[\mob\Gamma_{\tilde{g},n}=\bigoplus_{2g+u+h-1=\tilde{g}}\mob\Gamma_{g,u,h,n}\]
The finite group $\mathbb{Z}_2^{\times n}$ acts on $\mob\Gamma_{\tilde{g},n}$ by switching the colours of legs.

Denote by $\Gamma^\mathbb{R}_{\tilde{g},n}$ the space of oriented reduced dianalytic ribbon graphs (see the proof of \autoref{prop:dianalyticgraphs}) of topological type $(\tilde{g},n)$. Observe that $\Gamma^\mathbb{R}_{\tilde{g},n}=\Gamma_{\tilde{g},n}/I=\mob\Gamma_{\tilde{g},n}/J$ where $I$ is the subspace generated by relations of the form $(G,\mathrm{or})=(H,\mathrm{or}')$ whenever $(G,\mathrm{or})$ is isomorphic to $(H,\mathrm{or}')$ after reversing the cyclic ordering at some of the vertices of $G$ if necessary and $J$ is the subspace generated by relations of the form $(G,\mathrm{or})=(H,\mathrm{or}')$ whenever $(G,\mathrm{or})$ is isomorphic to $(H,\mathrm{or}')$ after changing the colours of any of the half edges of $G$ if necessary. What this means is that $\Gamma^\mathbb{R}_{\tilde{g},n}$ is obtained from $\Gamma_{\tilde{g},n}$ by identifying cyclic orderings at vertices of ribbon graphs with the reverse cyclic orderings or that it is obtained from $\mob\Gamma_{\tilde{g},n}$ by forgetting the colourings of M\"obius graphs.

These spaces are finite dimensional and cohomologically graded by the number of internal edges in a graph. We define a differential on these spaces by
\[d(G,\mathrm{or})=\sum (G',\mathrm{or'})
\]
where the sum is taken over classes $(G',\mathrm{or'})$ arising from all ways of expanding one vertex of $G$ into two vertices each of valence at least $3$ so that $G'/e=G$ where $e$ is the new edge joining the two new vertices. The orientation $\mathrm{or'}$ is the product of the natural orientations on $\mathbb{Q}^{\edges(G')}\supset \mathbb{Q}^{\edges(G')\setminus e}$ and $H_1(|G'|)\cong H_1(|G'/e|)$.

We can then unwrap the main theorems.

\begin{theorem}
\Needspace*{4\baselineskip}\mbox{}
\begin{itemize}
\item There are isomorphisms:
\[
H_\bullet(\no_{g,h,n},\mathbb{Q})\cong H_\bullet(\nb_{g,h,n},\mathbb{Q})\cong H^{6g+3h+n-6-\bullet}(\Gamma_{g,h,n})
\]
Further for $n\geq 1$ such isomorphisms also hold for integral homology. This is the well known ribbon graph decomposition.
\item There are isomorphisms:
\[
H_\bullet(\ko_{g,u,h,n},\mathbb{Q})\cong H_\bullet(\kb_{g,u,h,n},\mathbb{Q})\cong H^{6g+3u+3h+n-6-\bullet}(\mob\Gamma_{g,u,h,n})
\]
Further for $n\geq 1$ such isomorphisms also hold for integral homology.
\item There are isomorphisms:
\[
H_\bullet(\mo_{\tilde{g},n},\mathbb{Q})\cong H^{3\tilde{g}+n-3-\bullet}(\mob\Gamma_{\tilde{g},n})/\mathbb{Z}_2^{\times n}
\]
\item There are isomorphisms:
\[
H_\bullet(\mb_{\tilde{g},n},\mathbb{Q})\cong H^{3\tilde{g}+n-3-\bullet}(\Gamma^\mathbb{R}_{\tilde{g},n})
\]
\end{itemize}
\end{theorem}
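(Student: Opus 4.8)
The plan is to obtain each of the four isomorphisms by passing to the $(g,u,h,n)$-graded (respectively $(\tilde g,n)$-graded) piece of the operadic statements already proved and unwinding the definitions, so that essentially no new geometric input is needed beyond \ref{prop:main1}, \ref{prop:main2} and \ref{prop:cellchainiso}.

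For the second bullet I would argue as follows. The inclusion $\ko_{g,u,h,n}\hookrightarrow\kb_{g,u,h,n}$ is a homotopy equivalence because $\kb_{g,u,h,n}$ is an orbifold with corners whose interior is $\ko_{g,u,h,n}$, and \ref{prop:main1} gives a homotopy equivalence $D_{g,u,h,n}\hookrightarrow\kb_{g,u,h,n}$, so all three spaces have the same rational homology. By \ref{prop:celldecomp} the space $D_{g,u,h,n}$ is an orbi-cell complex with cells indexed by reduced M\"obius graphs of topological type $(g,u,h,n)$ and attaching maps the vertex-expansions of \ref{rem:cellattach}, so its rational cellular chain complex is the $(g,u,h,n)$-summand of $C_*(D)$. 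Combining \ref{prop:cellchainiso} with \ref{prop:orientedmobgraph} (using \ref{rem:orientation} to match the two notions of orientation) identifies this summand, as a cochain complex, with $\mob\Gamma_{g,u,h,n}$ equipped with the differential $d$ defined above. Taking cohomology and reversing the grading — the number of internal edges of a trivalent graph of this type being $6g+3u+3h+n-6=\dim\kb_{g,u,h,n}$ — gives the stated isomorphism. When $n\geq 1$ the relevant stable Klein surfaces have no non-trivial automorphisms, so $D_{g,u,h,n}$ is a genuine CW complex and the same computation works integrally. The first bullet is the analogous, already known, statement obtained identically from \ref{prop:costello}, the homotopy equivalence $\no_{g,h,n}\hookrightarrow\nb_{g,h,n}$, and the identification of the cellular chains of $D_{g,h,n}\subset\nb_{g,h,n}$ with $\moddass$ (the bordered-Riemann-surface specialisation of \ref{prop:cellchainiso}).

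For the third bullet I would use $\mo_{\tilde g,n}\cong\bigl(\coprod_{2g+u+h-1=\tilde g}\ko_{g,u,h,n}\bigr)/\mathbb{Z}_2^{\times n}$. Over $\mathbb{Q}$ the homology of a quotient by a finite group is the module of coinvariants, so
\[
H_\bullet(\mo_{\tilde g,n},\mathbb{Q})\;\cong\;\Bigl(\,\bigoplus_{2g+u+h-1=\tilde g}H_\bullet(\ko_{g,u,h,n},\mathbb{Q})\Bigr)_{\mathbb{Z}_2^{\times n}},
\]
where $\mathbb{Z}_2^{\times n}$ permutes the summands by flipping orientations of marked points, which on graphs is exactly the switching of leg-colours. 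Feeding in the second bullet, and noting that the degree shift $6g+3u+3h+n-6$ equals $3\tilde g+n-3$ uniformly over the disjoint union (since $3\tilde g=6g+3u+3h-3$), this rewrites as $H^{3\tilde g+n-3-\bullet}(\mob\Gamma_{\tilde g,n})/\mathbb{Z}_2^{\times n}$. For the fourth bullet I would instead start from the second quasi-isomorphism of \ref{thm:main}, $C_*(D)/(a=1)\simeq C_*(\mb)$, combine it with \ref{prop:main2} (so $H_\bullet(\mb_{\tilde g,n},\mathbb{Q})\cong H_\bullet(\dr_{\tilde g,n},\mathbb{Q})$), and use the orbi-cell decomposition of $\dr_{\tilde g,n}$ by dianalytic ribbon graphs from \ref{prop:dianalyticgraphs}. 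Its rational cellular chain complex is the $(\tilde g,n)$-piece of $C_*(\dr)=C_*(D)/(a=1)$, which under \ref{prop:cellchainiso} is $\bigl(\moddmass/(a=1)\bigr)((\tilde g,n))$, i.e. by definition the span of oriented reduced M\"obius graphs of the relevant topological types modulo the colour-forgetting relations, which is precisely $\Gamma^{\mathbb{R}}_{\tilde g,n}$ with differential $d$. Taking cohomology and reversing the grading finishes the fourth isomorphism.

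Almost all of the content lives in the earlier sections; the residual points to verify — and what I expect to be the only mildly delicate part — are bookkeeping: (i) that the rational cellular chain complexes of the orbi-cell decompositions of $D_{g,u,h,n}$ and $\dr_{\tilde g,n}$ genuinely compute the rational homology of the underlying (orbi)spaces, and that for $n\geq 1$ the decompositions are honest CW structures so integral coefficients are admissible; (ii) that the edge-counting and orientation conventions line up so that the homological grading on $C_*$ and the cohomological grading by number of internal edges on the graph complexes are related by the single reversal stated, uniformly across each disjoint union; and (iii) for the third bullet, that passing to coinvariants is legitimate, which is exactly the reason those isomorphisms are asserted only rationally.
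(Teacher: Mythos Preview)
Your proposal is correct and follows essentially the same approach as the paper, which presents this theorem explicitly as an ``unwrapping'' of \ref{thm:main} and \ref{prop:cellchainiso} without further proof. Your careful tracking of the grading shift, the coinvariants argument for the third bullet, and the identification of $C_*(\dr)$ with $C_*(D)/(a=1)$ for the fourth bullet are exactly the bookkeeping the paper leaves implicit.
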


\bibliography{references}
\bibliographystyle{alphaurl}

\end{document}